\documentclass{article}
\usepackage{amsmath}
\usepackage{amssymb}
\usepackage{amsthm}
\usepackage{amscd}
\usepackage[all]{xy}
\usepackage{mathrsfs}
\usepackage{graphicx}
\usepackage{color}
\usepackage{enumitem}
\usepackage{mathtools}
\numberwithin{equation}{section}
\newtheorem{thm}{Theorem}[section]
\newtheorem{lem}[thm]{Lemma}
\newtheorem{prp}[thm]{Proposition}
\newtheorem{cor}[thm]{Corollary}

\theoremstyle{definition}
\newtheorem{dfn}[thm]{Definition}

\theoremstyle{remark}
\newtheorem{rem}[thm]{Remark}

\newcommand{\eqdot}{\coloneqq}

\newcommand{\N}{\mathbb{N}}

\newcommand{\R}{\mathbb{R}}

\newcommand{\nonnegaR}{\mathbb{R}_{\geq 0}}

\newcommand{\supover}[1]{\sup_{#1}}
\newcommand{\limtoinfty}[1]{\lim_{#1 \to \infty}}
\newcommand{\liminftoinfty}[1]{\liminf_{#1 \to \infty}}
\newcommand{\limsuptoinfty}[1]{\limsup_{#1 \to \infty}}

\newcommand{\abs}[1]{\left| #1 \right|}
\newcommand{\norm}[1]{\left\lVert #1 \right\rVert}
\mathtoolsset{showonlyrefs=true}
\title{Discrete approximation of semi-Dirichlet forms with non-symmetric perturbations of resistance forms}
\author{Hitoshi Ito}
\date{}
\allowdisplaybreaks[4]
\begin{document}
\maketitle
\begin{abstract}
    In this paper, we study discrete approximations of semi-Dirichlet forms obtained by adding non-symmetric drift terms, expressed in terms of mutual energy measures, to resistance forms whose associated resistance metric spaces are compact. The main novelty is that the drift terms need not to be absolutely continuous with respect to the underlying measure. Under suitable smallness assumptions, we prove generalized Mosco convergence of the approximating forms and derive weak convergence of the associated Feller processes. We also demonstrate the applicability of our main theorem to post-critically finite self-similar fractals and describe the resulting perturbation of the approximating jump chains in the case of the Sierpi\'nski gasket.
\end{abstract}
\section{Introduction}
Dirichlet forms, a type of quadratic forms defined on $L^{2}$ spaces, are fundamental tools for analyzing Markov processes. For instance, the symmetric Dirichlet form $(\mathcal{E},\mathcal{F})$ defined by 
\begin{align}\label{normaldirich}
    \mathcal{E}(f,g) \coloneqq \frac{1}{2}\int_{\R^{d}}(\nabla f, \nabla g)_{\R^{d}} \,dx, 
    \quad f,g \in \mathcal{F} \coloneqq H^{1}(\R^{d})
\end{align}
is associated with the Brownian motion on $\R^{d}$ whose generator is the half Laplacian on $\R^{d}$. Dirichlet forms have also been used for the analysis on fractals. Around 1990, Kigami \cite{kigami1989harmonic} and Fukushima--Shima \cite{fukushima1992spectral} used 
symmetric Dirichlet forms for the construction of the Brownian motion on the Sierpi\'nski gasket, which, in the late 1980s, had been constructed by Goldstein \cite{goldstein1987random}, Kusuoka \cite{MR933827}, and 
Barlow--Perkins \cite{barlow1988brownian} in probabilistic approaches such as the scaling limits of random walks on finite sets approximating the Sierpi\'nski 
gasket. The Dirichlet form constructed on the Sierpi\'nski gasket is considered as the counterpart of \eqref{normaldirich}.
Also, Kigami \cite{kigami1993harmonic} 
extended the above argument to post-critically finite (p.c.f. for short) 
self-similar fractals.

To date, including the aforementioned pioneering works, studies of diffusions on fractals have
mainly focused on symmetric Dirichlet forms and relatively few studies have treated 
non-symmetric Dirichlet forms on fractals. Since the general theory of such concepts 
is now well-developed, it would be worth investigating non-symmetric 
Dirichlet forms (which we refer to as semi-Dirichlet forms in this paper) on fractals and 
associated diffusion processes. We focus in particular on their discrete approximation. 

More specifically, we treat non-symmetric forms as described below. 
On $\R^{d}$, the following quadratic form obtained by adding a drift term to \eqref{normaldirich}
\begin{align} \label{eucnonsym}
    (f,g) \mapsto \frac{1}{2}\int_{\R^{d}}(\nabla f, \nabla g)_{\R^{d}} \,dx
    + \int_{\R^{d}}(b, \nabla f)_{\R^{d}}g \,dx
\end{align}
naturally arises, where $b$ is an $\R^{d}$-valued function.
We would like to consider the counterpart of \eqref{eucnonsym} on fractals, where
we usually cannot define the first order derivative.
We first consider the case of the Sierpi\'nski gasket SG. Let $(\mathcal{E},\mathcal{F})$ 
be the canonical Dirichlet form on the Sierpi\'nski gasket. Despite the fact that 
the Sierpi\'nski gasket does not have a usual differential structure, \cite[Theorem 5.4]{hino2010energy} shows that if we take a 
suitable function $h \in \mathcal{F}$, we can define the 
\textit{first-order derivative} $\frac{df}{dh}$ of $f$ with respect to $h$ for all 
$f \in \mathcal{F}$ and we have the following identity: 
\begin{align}
    \mathcal{E}(f,f)
= \frac{1}{2}
\int_{\mathrm{SG}} \left(\frac{df}{dh} \right)^{2} \,d\nu_{h},
\end{align}
where $\nu_{h}$ denotes the energy measure of $h$ (see Definition~\ref{defitionenergymeasure}). Thus, 
when we consider $\mathcal{E}(f,g)$ as the first term of \eqref{eucnonsym}, it is 
natural to think that the counterpart of the second drift term of \eqref{eucnonsym} 
would be 
\begin{align} \label{fracdifterm}
    \int_{\mathrm{SG}} b\frac{df}{dh}g \,d\nu_{h},
\end{align}
where $b$ is an $\R$-valued function.
Taking into consideration that the equation
\begin{align} 
    \frac{df}{dh}=\frac{d\nu_{f,h}}{d\nu_{h}}
\end{align}
holds, where $\nu_{f,h}$ denotes the mutual energy measure of $f$ and $h$, 
the term \eqref{fracdifterm} equals the term
\begin{align} \label{fracmutenergterm}
    \int_{\mathrm{SG}} bg \,d\nu_{f,h}
\end{align}
in which no \textit{first-order derivatives} appear. Note that 
studies investigating diffusions on non-smooth spaces associated with non-symmetric Dirichlet forms of type \eqref{eucnonsym} already exist in the context of 
metric measure spaces. For example, in \cite{suzuki2018convergence}, non-symmetric drift terms of the form
\begin{align} \label{absolut}
    \int_{X} b(f)g \,dm
\end{align}
are considered in a metric measure space $(X,d, m)$, where $b$ is called a derivation in \cite{suzuki2018convergence}. What is important is that the drift term
\eqref{absolut} is absolutely continuous with respect to the underlying measure $m$. 
On the other hand, the energy measure in \eqref{fracmutenergterm} is singular with respect to the 
Hausdorff measure on the Sierpi\'nski gasket (see \cite{kusuoka1989dirichlet}). 
In this sense, the setting is different from the studies in \cite{suzuki2018convergence}. 
Based on the above considerations 
in the case of the Sierpi\'nski gasket, we adopt non-symmetric terms
\eqref{fracmutenergterm} of this type and consider semi-Dirichlet forms obtained by adding them to a symmetric term. Our ultimate goal is to reveal concrete behavior of diffusion processes associated with such semi-Dirichlet forms. Since they do not have explicit representations such as stochastic differential equations, we investigate their behaviors through discrete approximations in this article.

Now, we briefly explain the setting of the problem. Let $X$ be a set (not necessarily a fractal), $(\mathcal{E},\mathcal{F})$ a resistance form on $X$.
Assume that $X$ is compact with respect to the resistance metric associated with $(\mathcal{E},\mathcal{F})$. 
Let $b_{1},\dots,b_{N} \in C(X)$ and $h_{1},\dots,h_{N} \in \mathcal{F}$. 
We add the following $N$ drift terms
\begin{equation}
\mathcal{Q}_{i}(f,g) \eqdot \frac{1}{2}\int_{X} b_{i}g \,d\nu_{f,h_{i}}
, \quad f,g \in \mathcal{F}, \quad i=1,2, \dots ,N
\end{equation}
to $(\mathcal{E}, \mathcal{F})$ and obtain a non-symmetric form $(\mathcal{A}, \mathcal{F})$ by
\begin{align}
\mathcal{A} \eqdot \mathcal{E}+\sum_{i=1}^{N}\mathcal{Q}_{i}.
\end{align}
Let
$\{ V_{n} \}_{n \in \N}$ be a non-decreasing sequence of nonempty finite 
subsets approximating $X$. For each $n \in \N$, we suitably define 
$\mathcal{A}^{n} \colon l(V_{n}) \times l(V_{n}) \rightarrow \R$ as 
a discrete approximation of $\mathcal{A}$ on $V_{n}$, 
where $l(V_{n})$ denotes the set of all real functions on $V_{n}$.
Let $\mu$ (resp.\ $\mu_{n}$) be a probability measure on $X$ (resp.\ $V_{n}$) and assume that
$\{ \mu_{n} \}_{n \in \N}$ converges weakly to $\mu$. 
Then, under Assumption A or B in Section 3, it is shown that $(\mathcal{A}, \mathcal{F})$ (resp.\ $(\mathcal{A}^{n},l(V_{n}))$) 
is a semi-Dirichlet form on $L^2(X,\mu)$ (resp.\ $L^2(V_{n},\mu_{n})$). Let 
$\{ \tilde{S}_{t} \}_{t \geq 0 }$ (resp.\ $\{ \tilde{S}^{n}_{t} \}_{t \geq 0 }$) be 
the semigroup associated with $(\mathcal{A}, \mathcal{F})$ 
(resp.\ $(\mathcal{A}^{n},l(V_{n}))$). 
Let $D_{X}[0,\infty)$ denote the space of all $X$-valued c\`adl\`ag functions on $[0,\infty)$.
Our main result is stated as follows.
\begin{thm}\label{mainfirst}
Assume that Assumption A or Assumption B holds. Let $\nu$ $($resp.\ $\nu_{n}$$)$ be a probability measure on $X$ $($resp.\ $V_{n}$$)$ and assume 
that $\{ \nu_{n} \}_{n \in \N}$ converges weakly to $\nu$. Let $Z$ $($resp.\ $Y_n$$)$ be the 
Feller process on $X$ $($resp.\ $V_{n}$$)$ associated with the semigroup $\{ \tilde{S}_{t} \}_{t \geq 0 }$ 
$($resp.\ $\{ \tilde{S}^{n}_{t} \}_{t \geq 0 }$$)$ whose initial distribution is $\nu$ $($resp.\ $\nu_{n}$$)$. Let $i_{V_{n}} \colon V_{n}\rightarrow X$ be 
the canonical inclusion map and define $Z_{n} \coloneqq i_{V_{n}} \circ Y_{n}$. Then, $\{Z_n \}_{n \in \N}$
converges in distribution to $Z$ with respect to the Skorokhod J1 topology on $D_{X}[0,\infty)$.
\end{thm}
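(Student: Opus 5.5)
The proof will go through convergence of semigroups in the uniform norm on continuous functions, followed by the standard Feller-process convergence theorem (Ethier--Kurtz, Theorem 4.2.11 / Kallenberg's Theorem 19.25 type). Here $X$ is compact, $V_n\subset X$, and $C(X)$ is compared with $l(V_n)$ via restriction $\pi_n f\eqdot f|_{V_n}$. That theorem says the following. Suppose that for every $f\in C(X)$ and $t\ge0$ we have $\sup_{x\in V_n}|\tilde S^n_t\pi_n f(x)-\tilde S_t f(x)|\to0$, locally uniformly in $t$, and that $\nu_n\to\nu$ weakly. Then $Z_n=i_{V_n}\circ Y_n$ converges in distribution to $Z$ in the J1 topology on $D_X[0,\infty)$. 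By the Trotter--Kato theorem, uniform semigroup convergence is equivalent to convergence of the resolvents $\tilde G^n_\alpha\pi_n f\to \tilde G_\alpha f$ uniformly on $V_n$ for one (hence all) large $\alpha$. So the whole proof reduces to this resolvent statement, together with the fact that each $\tilde S_t$, $\tilde S^n_t$ is Feller, which I would take from Section 3 under Assumption A or B.

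The resolvent convergence will come from the generalized Mosco convergence of $(\mathcal A^n,l(V_n))$ to $(\mathcal A,\mathcal F)$ along $L^2(V_n,\mu_n)\to L^2(X,\mu)$, which the paper proves under the same assumptions. Mosco convergence for non-symmetric forms satisfying a uniform sector condition gives strong convergence $\tilde G^n_\alpha\pi_n f\to\tilde G_\alpha f$ in the Kuwae--Shioya sense of $L^2(\mu_n)\to L^2(\mu)$; this is in the spirit of Hino's and T\"olle's results. The smallness assumptions on $b_i$ and $h_i$ are exactly what give a uniform sector constant and uniform coercivity $\mathcal A^n_\alpha(u,u)\ge c\,\mathcal E^n_\alpha(u,u)$.

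The main obstacle is upgrading $L^2$ convergence to uniform convergence. For this I would use the resistance-form structure. Set $u_n=\tilde G^n_\alpha\pi_nf$. Coercivity gives $\mathcal E^n(u_n,u_n)+\alpha\|u_n\|^2_{L^2(\mu_n)}\le C\|f\|_\infty^2$. The resistance estimate $|u_n(x)-u_n(y)|^2\le R_n(x,y)\,\mathcal E^n(u_n,u_n)$ then yields equicontinuity with respect to $R$, using $R_n=R$ on $V_n$ since the $\mathcal E^n$ are traces of $\mathcal E$. Extend each $u_n$ harmonically to $\tilde u_n\in\mathcal F$ with $\mathcal E(\tilde u_n)=\mathcal E^n(u_n)$. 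Since $(X,R)$ is compact and the sup norms are uniformly bounded (by the $L^2$ bound together with the H\"older-type estimate), Arzel\`a--Ascoli gives uniformly convergent subsequences. Each limit $u$ is identified with $\tilde G_\alpha f$ $\mu$-a.e. by the $L^2$ resolvent convergence and weak convergence $\mu_n\to\mu$. Because both functions are continuous, they agree on $\operatorname{supp}\mu$; if $\mu$ does not have full support, I would identify $u$ directly via the Mosco limit inequality instead. Hence the whole sequence converges uniformly.

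Finally, I would combine uniform resolvent convergence with Trotter--Kato to get uniform semigroup convergence, and apply the Ethier--Kurtz theorem, with $C(X)$ as the core, to conclude.
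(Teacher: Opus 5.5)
Your proposal is correct and follows essentially the paper's route. Proposition~\ref{main1}, via T\"olle's Mosco-type theorem, gives strong K-S convergence of the $L^2$ resolvents. The uniform energy bound plus compactness then upgrades this to uniform convergence on $V_n$; the paper phrases your Arzel\`a--Ascoli step as weak convergence of the harmonic extensions in $(\mathcal{F},\mathcal{E}_1)$ combined with the compact embedding of Proposition~\ref{cptop}, and uses full support of $\mu$ exactly as you do. A Trotter--Kato step (T\"olle's Theorem 2.21 in the sup-norm K-S framework, Proposition~\ref{main2}) and Ethier--Kurtz's Theorem 4.2.11 conclude.

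The one ingredient you leave implicit is conservativeness of $\{\tilde S_t\}_{t\ge0}$, which is needed for $Z$ to have paths in $D_X[0,\infty)$. The paper proves $\tilde S_t 1=1$ separately in Proposition~\ref{feller}, but it also follows from your uniform convergence, since $\tilde S^n_t 1=1$ and $V_*$ is dense.
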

\begin{rem}
Since $(\mathcal{A}^{n},l(V_{n}))$ is a semi-Dirichlet form only for large enough 
$n \in \N$ (see Proposition~\ref{semidir1}), we consider $Z_n$ only for large enough 
$n \in \N$ in Theorem~\ref{mainfirst}.
\end{rem}

Thus, the contribution of this paper is that we introduce a class of semi-Dirichlet forms whose drift terms are described by mutual energy measures, which need not to be absolutely continuous with respect to the underlying measure and discuss the convergence of their discrete approximations. The main tool for the proof is a generalized version of Mosco convergence. 
Mosco \cite{MR1283033} discussed the equivalence of convergence of symmetric quadratic forms and 
strong convergence of associated resolvents. Hino \cite{hino1998convergence} generalized this in the case of non-symmetric quadratic forms. T\"{o}lle \cite{Jtolle} 
treated the convergence of non-symmetric quadratic forms on varying spaces in 
Kuwae--Shioya's framework which was discussed in \cite{kuwae2003convergence}. 
We use T\"{o}lle's version.

The remainder of the article is organized as follows. In Section 2, we review Kuwae--Shioya's 
framework of varying spaces which was generalized by T\"olle and the basics of semi-Dirichlet forms and resistance forms. 
In Section 3, we describe the setting of the problem in detail.
In Section 4, we prove Theorem~\ref{mainfirst}. In Section 5, we discuss p.c.f.\ fractals 
as typical examples to which Theorem~\ref{mainfirst} can be applied.
\section{Preliminaries}
\subsection{Kuwae--Shioya's framework}
Throughout this paper, all normed spaces are assumed to be over the real field. For a normed space $\mathscr{X}$, 
$\norm{\cdot}_{\mathscr{X}}$ denotes its norm. For an inner product space $\mathscr{H}$, 
$( \cdot, \cdot)_{\mathscr{H}}$ denotes its inner product and its norm is defined by 
$\norm{\cdot}_{\mathscr{H}} \eqdot ( \cdot, \cdot)_{\mathscr{H}}^{1/2}$. 
For normed spaces $\mathscr{X}^{(i)} \,(i=1,2)$, $\mathscr{L}(\mathscr{X}^{(1)}, \mathscr{X}^{(2)})$
denotes the set of all bounded linear operators from $\mathscr{X}^{(1)}$ to $\mathscr{X}^{(2)}$ and 
$\norm{B}_{\mathrm{op}}$ denotes the operator norm of 
$B \in \mathscr{L}(\mathscr{X}^{(1)}, \mathscr{X}^{(2)})$. Also, we define
$\mathscr{L}(\mathscr{X})
\eqdot\mathscr{L}(\mathscr{X}, \mathscr{X})$.

We follow \cite{kuwae2003convergence, Jtolle} to provide Kuwae--Shioya's framework. In this section, 
all the proofs are omitted. For details, see \cite{kuwae2003convergence, Jtolle}. 
In order to distinguish the convergence in Kuwae--Shioya's framework defined below from the usual one, starting in Section 3, we prefix ``K-S'' to ``convergence'' 
whenever we mention the convergence in Kuwae--Shioya's framework.
\begin{dfn}[{\cite[p.\ 611]{kuwae2003convergence}}] \label{ksspaceconve}
Let $\mathscr{X}_{n}(n \in \N)$ and $\mathscr{X}_{\infty}$ be Banach spaces, 
$\mathscr{C}_{\infty}$ a dense subspace of $\mathscr{X}_{\infty}$ and 
$\Phi_{n} \colon \mathscr{C}_{\infty} \rightarrow \mathscr{X}_{n}$ a linear map. We say that 
$\{ \mathscr{X}_{n} \}_{n \in \N}$ converges to $\mathscr{X}_{\infty}$ if the following holds:
\begin{equation} \label{banachconv}        
    \limtoinfty{n} \norm{\Phi_{n} \varphi}_{\mathscr{X}_{n}} = \norm{\varphi}_{\mathscr{X}_{\infty}},
    \quad \varphi \in \mathscr{C}_{\infty}.
    \end{equation}
\end{dfn}
In this section, we assume that the sequence of Banach spaces
$\{ \mathscr{X}_{n} \}_{n \in \N}$ converges to a Banach space $\mathscr{X}_{\infty}$ and 
the sequence of Hilbert spaces
$\{ \mathscr{H}_{n} \}_{n \in \N}$ converges to a Hilbert space $\mathscr{H}_{\infty}$.
\begin{dfn}[{\cite[Definition 2.4]{kuwae2003convergence}}]
We say that $\{ u_{n} \}_{n \in \N} \,(u_{n} \in \mathscr{X}_{n})$ strongly converges to 
$u \in \mathscr{X}_{\infty}$ if there exists 
$\{\varphi_{m} \}_{m \in \N} \subset \mathscr{C}_{\infty}$ which satisfies 
\begin{equation}
    \limtoinfty{m} \norm{\varphi_{m}-u}_{\mathscr{X}_{\infty}}=0 
    \text{ and }
    \limtoinfty{m} \limsuptoinfty{n}
    \norm{\Phi_{n} \varphi_{m}-u_{n}}_{\mathscr{X}_{n}}=0.
\end{equation}
\end{dfn}
\begin{rem}\label{rem3}
For any $\varphi \in \mathscr{C}_{\infty}$, $\Phi_{n} \varphi \in \mathscr{X}_{n}$ 
strongly converges to $\varphi \in \mathscr{X}_{\infty}$
by letting $\varphi_{m} \eqdot \varphi \in \mathscr{C}_{\infty}$.
\end{rem}
\begin{prp}[{\cite[Lemma 2.1]{kuwae2003convergence}, 
\cite[Lemma 2.12]{Jtolle}}]\label{kyo1}
    The following hold.
\begin{enumerate}[label=\normalfont(\roman*)]
    \item \label{enum:2kyo}
    If $\{ u_{n} \}_{n \in \N} \,(u_{n} \in \mathscr{X}_{n})$ 
    strongly converges to $u \in \mathscr{X}_{\infty}$, then 
    $\limtoinfty{n} \norm{u_{n}}_{\mathscr{X}_{n}}$ $=\norm{u}_{\mathscr{X}_{\infty}}$.
    \item \label{enum:4kyo}
    Assume that $\{ u_{n} \}_{n \in \N} \,(u_{n} \in \mathscr{X}_{n})$
    strongly converges to $u \in \mathscr{X}_{\infty}$. Then, 
    for $\{ v_{n} \}_{n \in \N} \,(v_{n} \in \mathscr{X}_{n})$, $\{ v_{n} \}_{n \in \N}$
    strongly converges to $u \in \mathscr{X}_{\infty}$ if and only if 
    $\limtoinfty{n} \norm{u_{n}-v_{n}}_{\mathscr{X}_{n}}=0$.
    \item \label{enum:7kyo}
    If $\{ u^{(i)}_{n} \}_{n \in \N} \,(u^{(i)}_{n} \in \mathscr{H}_{n})$ strongly converges to
    $u^{(i)} \in \mathscr{H}_{\infty}$ for $i=1,2$, then 
    $\limtoinfty{n}(u^{(1)}_{n},u^{(2)}_{n})_{\mathscr{H}_n}=
    (u^{(1)}, u^{(2)})_{\mathscr{H}_{\infty}}$.
    \item \label{enum:8kyo}
    $\{ u_{n} \}_{n \in \N} \,(u_{n} \in \mathscr{H}_{n})$ strongly converges to 
    $u \in \mathscr{H}_{\infty}$ if and only if 
    \begin{align}
    \limtoinfty{n} 
    \norm{u_{n}}_{\mathscr{H}_{n}}=\norm{u}_{\mathscr{H}_{\infty}} \text{and}
    \limtoinfty{n}(u_{n}, \Phi_{n}v)_{\mathscr{H}_{n}}=(u,v)_{\mathscr{H}_{\infty}}, 
    \quad v \in \mathscr{C}_{\infty}.
    \end{align}
\end{enumerate}
\end{prp}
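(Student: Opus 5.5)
I will prove the four items in order, using only the definition of strong convergence and the convergence \eqref{banachconv} of norms on $\mathscr{C}_{\infty}$. The one device used throughout is the following: whenever a sequence $\{u_n\}$ strongly converges to $u$, I fix an approximating sequence $\{\varphi_m\}\subset\mathscr{C}_{\infty}$ as in the definition. I then compare $u_n$ with $\Phi_n\varphi_m$, first letting $n\to\infty$ and afterwards $m\to\infty$.

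For \ref{enum:2kyo}, the triangle inequality gives
\[
\bigl|\norm{u_n}_{\mathscr{X}_n}-\norm{u}_{\mathscr{X}_\infty}\bigr|\le \norm{u_n-\Phi_n\varphi_m}_{\mathscr{X}_n}+\bigl|\norm{\Phi_n\varphi_m}_{\mathscr{X}_n}-\norm{\varphi_m}_{\mathscr{X}_\infty}\bigr|+\norm{\varphi_m-u}_{\mathscr{X}_\infty}.
\]
Taking $\limsup_n$ removes the middle term by \eqref{banachconv}. Letting $m\to\infty$ then removes the remaining two terms.

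For \ref{enum:4kyo}, I treat the two directions separately.
\begin{itemize}
\item For the ``if'' direction, write $\norm{\Phi_n\varphi_m-v_n}\le\norm{\Phi_n\varphi_m-u_n}+\norm{u_n-v_n}$. Hence the same sequence $\{\varphi_m\}$ witnesses the strong convergence of $\{v_n\}$.
\item For the ``only if'' direction, take witnesses $\{\varphi_m\}$ for $\{u_n\}$ and $\{\psi_m\}$ for $\{v_n\}$. Then
\[
\norm{u_n-v_n}\le\norm{u_n-\Phi_n\varphi_m}+\norm{\Phi_n(\varphi_m-\psi_m)}+\norm{\psi_m-\ldots}.
\]
More precisely, the last term is $\norm{\Phi_n\psi_m-v_n}$. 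Since $\Phi_n$ is linear and $\varphi_m-\psi_m\in\mathscr{C}_\infty$, \eqref{banachconv} gives $\limsup_n\norm{\Phi_n(\varphi_m-\psi_m)}=\norm{\varphi_m-\psi_m}_{\mathscr{X}_\infty}$. This tends to $0$ as $m\to\infty$.
\end{itemize}

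For \ref{enum:7kyo}, the sum $u^{(1)}_n+u^{(2)}_n$ strongly converges to $u^{(1)}+u^{(2)}$, witnessed by $\varphi^{(1)}_m+\varphi^{(2)}_m$ by linearity of $\Phi_n$. I then use the polarization identity $(a,b)=\tfrac14(\norm{a+b}^2-\norm{a-b}^2)$ and apply \ref{enum:2kyo} to $u^{(1)}_n\pm u^{(2)}_n$.

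For \ref{enum:8kyo}, the two directions are as follows.
\begin{itemize}
\item The ``only if'' direction follows from \ref{enum:2kyo}, and from \ref{enum:7kyo} combined with Remark~\ref{rem3}.
\item The ``if'' direction is the step I expect to be the main obstacle, since no approximating sequence is given and one must be built. Choose $\varphi_m\in\mathscr{C}_\infty$ with $\varphi_m\to u$, using the density of $\mathscr{C}_\infty$. Expand
\[
\norm{\Phi_n\varphi_m-u_n}^2=\norm{\Phi_n\varphi_m}^2-2(u_n,\Phi_n\varphi_m)+\norm{u_n}^2.
\]
By \eqref{banachconv} and the hypotheses, the right side converges as $n\to\infty$ to $\norm{\varphi_m}^2-2(u,\varphi_m)+\norm{u}^2=\norm{\varphi_m-u}^2$. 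This tends to $0$ as $m\to\infty$. The key point is that weak convergence is required only against elements of $\mathscr{C}_\infty$, and this is exactly what the expansion uses.
\end{itemize}
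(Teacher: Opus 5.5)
The paper gives no proof of this proposition (it defers to Kuwae--Shioya, Lemma~2.1, and T\"olle, Lemma~2.12). Your argument is correct and is the standard one for these facts: a three-term triangle inequality against the witnesses $\Phi_n\varphi_m$ for (i)--(ii), linearity of witnesses plus polarization for (iii), and expanding $\norm{\Phi_n\varphi_m-u_n}^2$ for the converse in (iv). Two small points of presentation: the placeholder $\norm{\psi_m-\ldots}$ in your display for (ii) should read $\norm{\Phi_n\psi_m-v_n}_{\mathscr{X}_n}$, and in (iii) the difference $u^{(1)}_n-u^{(2)}_n$ is handled by the same witness argument as the sum.
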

By Remark~\ref{rem3} and Proposition~\ref{kyo1}~\ref{enum:4kyo}, the following holds:
\begin{cor} \label{kyocor}
Assume that $\mathscr{C}_{\infty}=\mathscr{X}_{\infty}$. Then, 
$\{ u_{n} \}_{n \in \N} \,(u_{n} \in \mathscr{X}_{n})$ strongly converges to $u \in \mathscr{X}_{\infty}$ 
if and only if $\limtoinfty{n} \norm{\Phi_{n}u-u_{n}}_{\mathscr{X}_{n}}=0$.
\end{cor}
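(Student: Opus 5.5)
We argue in two directions, in each case reducing to Proposition~\ref{kyo1}~\ref{enum:4kyo} together with Remark~\ref{rem3}. The standing hypothesis $\mathscr{C}_{\infty}=\mathscr{X}_{\infty}$ ensures that $\Phi_{n}u$ is defined for every $u\in\mathscr{X}_{\infty}$, so the sequence $\{\Phi_{n}u\}_{n\in\N}$ makes sense. Remark~\ref{rem3}, with $\varphi\eqdot u\in\mathscr{C}_{\infty}$, then shows that $\{\Phi_{n}u\}_{n \in \N}$ strongly converges to $u$.

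Take this sequence as the reference sequence in Proposition~\ref{kyo1}~\ref{enum:4kyo}. That is, put $u_{n}' \eqdot \Phi_{n}u$, which strongly converges to $u$. The proposition says that an arbitrary sequence $\{u_{n}\}_{n\in\N}$ with $u_n\in\mathscr{X}_n$ strongly converges to $u$ if and only if $\limtoinfty{n}\norm{u_{n}'-u_{n}}_{\mathscr{X}_{n}}=0$. This condition is precisely $\limtoinfty{n}\norm{\Phi_{n}u-u_{n}}_{\mathscr{X}_{n}}=0$, which gives both implications at once.

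There is essentially no obstacle here. The only point needing care is that the reference sequence must converge to the same limit $u$ that is being tested. Remark~\ref{rem3} provides exactly this, and it is available only because $\mathscr{C}_{\infty}$ is the whole space $\mathscr{X}_{\infty}$.
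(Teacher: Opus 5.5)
Your proof is correct and follows the paper's own argument. The paper also derives the corollary from Remark~\ref{rem3}, which gives strong convergence of $\Phi_{n}u$ to $u$ because $u\in\mathscr{C}_{\infty}=\mathscr{X}_{\infty}$, combined with Proposition~\ref{kyo1}~\ref{enum:4kyo} applied with $\{\Phi_{n}u\}_{n\in\N}$ as the reference sequence.
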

\begin{dfn}[cf.\ {\cite[Definition 2.5]{kuwae2003convergence}}]
We say that $\{ u_{n} \}_{n \in \N} \,(u_{n} \in \mathscr{H}_{n})$ weakly converges to $u \in \mathscr{H}_{\infty}$
if the following condition holds: $\supover{n \in \N} \norm{u_n}_{\mathscr{H}_{n}} < \infty$, and $\limtoinfty{n} (u_{n}, v_{n})_{\mathscr{H}_{n}}=(u, v)_{ \mathscr{H}_{\infty}}$ holds for any $v \in \mathscr{H}_{\infty}$ and $\{ v_{n} \}_{n \in \N} \,(v_{n} \in \mathscr{H}_{n})$ strongly converging to $v$. 
\end{dfn}
\begin{rem}
We add the condition $\supover{n \in \N} \norm{u_n}_{\mathscr{H}_{n}} < \infty$ 
unlike the original definition.
\end{rem}
\begin{rem} \label{rem5}
The strong convergence implies the weak convergence by Proposition~$\ref{kyo1}$~\ref{enum:2kyo} 
and \ref{enum:7kyo}.
\end{rem}
\begin{prp}[{\cite[Lemma 2.13]{Jtolle}}]\label{wk1}
The following hold.
\begin{enumerate}[label=\normalfont(\roman*)]
    \item \label{enum:1wk}
    $\{ u_{n} \}_{n \in \N} \,(u_{n} \in \mathscr{H}_{n})$ weakly converges to 
    $u \in \mathscr{H}_{\infty}$ if and only if $\supover{n} \norm{u_{n}}_{\mathscr{H}_{n}}$ $< \infty$ and $\limtoinfty{n}(u_{n}, \Phi_{n}v)_{\mathscr{H}_{n}}=(u,v)_{\mathscr{H}_{\infty}}$ for any $v \in \mathscr{C}_{\infty}$.
    \item \label{enum:4wk}
    If $\{ u_{n} \}_{n \in \N} \,(u_{n} \in \mathscr{H}_{n})$ weakly converges to 
    $u \in \mathscr{H}_{\infty}$, then for any subsequence $\{ u_{n_{k}} \}_{k \in \N}$ of 
    $\{ u_{n} \}_{n \in \N}$, $\{ u_{n_{k}} \}_{k \in \N}$ weakly converges to $u$.
    \item \label{enum:5wk}
    If $\{ u_{n} \}_{n \in \N} \,(u_{n} \in \mathscr{H}_{n})$ weakly converges to both 
    $u^{(1)}$ and $u^{(2)} \in \mathscr{H}_{\infty}$, then $u^{(1)}=u^{(2)}$ holds.
\end{enumerate}
\end{prp}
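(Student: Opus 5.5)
The plan is to prove the three items in order, using only the definitions of strong and weak convergence, Remark~\ref{rem3} and the density of $\mathscr{C}_{\infty}$ in $\mathscr{H}_{\infty}$.

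For \ref{enum:1wk}, the ``only if'' direction is immediate. The uniform bound is part of the definition of weak convergence. For $v \in \mathscr{C}_{\infty}$, Remark~\ref{rem3} says that $\Phi_{n} v$ strongly converges to $v$, so the definition gives $(u_{n},\Phi_{n} v)_{\mathscr{H}_{n}} \to (u,v)_{\mathscr{H}_{\infty}}$.

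For the ``if'' direction, set $M \eqdot \sup_{n}\norm{u_{n}}_{\mathscr{H}_{n}} < \infty$. Take $v \in \mathscr{H}_{\infty}$ and $v_{n} \in \mathscr{H}_{n}$ strongly converging to $v$, and pick $\varphi_{m} \in \mathscr{C}_{\infty}$ as in the definition of strong convergence. I split the error as
\begin{align}
\abs{(u_{n},v_{n})_{\mathscr{H}_{n}}-(u,v)_{\mathscr{H}_{\infty}}}
&\le M\norm{v_{n}-\Phi_{n}\varphi_{m}}_{\mathscr{H}_{n}}
+\abs{(u_{n},\Phi_{n}\varphi_{m})_{\mathscr{H}_{n}}-(u,\varphi_{m})_{\mathscr{H}_{\infty}}} \\
&\quad +\norm{u}_{\mathscr{H}_{\infty}}\norm{\varphi_{m}-v}_{\mathscr{H}_{\infty}}.
\end{align}
I then take $\limsup_{n}$. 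The middle term vanishes by hypothesis, so the bound becomes
\begin{align}
M\limsup_{n}\norm{\Phi_{n}\varphi_{m}-v_{n}}_{\mathscr{H}_{n}}+\norm{u}_{\mathscr{H}_{\infty}}\norm{\varphi_{m}-v}_{\mathscr{H}_{\infty}}.
\end{align}
Letting $m\to\infty$, this tends to $0$ by the choice of $\{\varphi_{m}\}$. This is the only step with any content: the whole proof is the $\varepsilon/3$ argument, and the uniform bound $M$ is exactly what controls the first term.

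For \ref{enum:4wk}, I first note that $\{\mathscr{H}_{n_{k}}\}_{k}$ with the maps $\Phi_{n_{k}}$ again converges to $\mathscr{H}_{\infty}$ in the sense of Definition~\ref{ksspaceconve}. So weak convergence of the subsequence is meaningful, and item \ref{enum:1wk} applies to this new framework. The bound $\sup_{k}\norm{u_{n_{k}}}_{\mathscr{H}_{n_{k}}} \le M$ is inherited. For $v \in \mathscr{C}_{\infty}$, the sequence $(u_{n_{k}},\Phi_{n_{k}}v)_{\mathscr{H}_{n_{k}}}$ is a subsequence of a sequence converging to $(u,v)_{\mathscr{H}_{\infty}}$, by the ``only if'' part of \ref{enum:1wk} applied to the original sequence. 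Applying the ``if'' part of \ref{enum:1wk} in the subsequence framework then gives the claim.

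For \ref{enum:5wk}, I apply the ``only if'' part of \ref{enum:1wk} to both limits. For every $v \in \mathscr{C}_{\infty}$ this gives $(u^{(1)},v)_{\mathscr{H}_{\infty}} = \lim_{n}(u_{n},\Phi_{n}v)_{\mathscr{H}_{n}} = (u^{(2)},v)_{\mathscr{H}_{\infty}}$. Hence $u^{(1)}-u^{(2)}$ is orthogonal to the dense subspace $\mathscr{C}_{\infty}$, and by continuity of the inner product it is orthogonal to all of $\mathscr{H}_{\infty}$. Therefore $u^{(1)}=u^{(2)}$.
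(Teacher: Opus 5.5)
Your proof is correct. The $\varepsilon/3$ splitting through the approximating sequence $\{\varphi_m\}$, with the uniform bound built into the paper's definition of weak convergence controlling the first term, proves (i); reindexing the framework along $\{n_k\}$ before invoking (i) proves (ii); and the density of $\mathscr{C}_{\infty}$ proves (iii). The paper gives no proof of its own here, since all proofs in Section 2.1 are deferred to Kuwae--Shioya and T\"olle.
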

\begin{dfn}[{\cite[Definition 2.6]{kuwae2003convergence}}]
We say that $\{ B_{n} \}_{n \in \N} \,( B_{n} \in \mathscr{L}(\mathscr{X}_{n}))$ strongly $($resp.\ weakly$)$ 
converges to $B_{\infty} \in \mathscr{L}(\mathscr{X}_{\infty})$ if the following holds: 
The strong $($resp.\ weak$)$ convergence of 
$\{ u_{n} \}_{n \in \N} \,(u_{n} \in \mathscr{X}_{n})$ to $u \in \mathscr{X}_{\infty}$ implies 
the strong $($resp.\ weak$)$ convergence of 
$\{ B_{n}u_{n} \}_{n \in \N} \,(B_{n}u_{n} \in \mathscr{X}_{n})$ to 
$B_{\infty}u \in \mathscr{X}_{\infty}$.
\end{dfn}
\begin{lem}[{\cite[Lemma 2.20]{Jtolle}}]\label{tech3}
Assume $\mathscr{C}_{\infty}=\mathscr{X}_{\infty}$ holds and $\mathscr{C}^{\prime}_{\infty}$ is 
a dense subspace of $\mathscr{X}_{\infty}$. Let $B_{n} \in \mathscr{L}(\mathscr{X}_{n})$, 
$B_{\infty} \in \mathscr{L}(\mathscr{X}_{\infty})$ and assume that 
$\supover{n}\norm{B_{n}}_{\mathrm{op}} < \infty$ and 
$\{ B_{n} \Phi_{n} \psi \}_{n \in \N}
\,( B_{n} \Phi_{n} \psi \in \mathscr{X}_{n})$ strongly converges to 
$ B_{\infty}\psi \in \mathscr{X}_{\infty}$ for any $\psi \in \mathscr{C}^{\prime}_{\infty}$. 
Then, $\{ B_{n} \}_{n \in \N}$ strongly converges to $B_{\infty}$.
\end{lem}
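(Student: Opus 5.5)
Fix a strongly convergent sequence $\{u_n\}_{n\in\N}$ with $u_n\in\mathscr{X}_n$ and limit $u\in\mathscr{X}_\infty$. The plan is to show that $\{B_nu_n\}_{n\in\N}$ strongly converges to $B_\infty u$ by an $\varepsilon/3$ argument. Since $\mathscr{C}_\infty=\mathscr{X}_\infty$, Corollary~\ref{kyocor} turns everything into norm estimates. We have $\lim_n\norm{\Phi_n u-u_n}_{\mathscr{X}_n}=0$, and it is enough to show $\lim_n\norm{\Phi_n B_\infty u-B_nu_n}_{\mathscr{X}_n}=0$.

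Set $M\eqdot\sup_n\norm{B_n}_{\mathrm{op}}<\infty$ and let $\varepsilon>0$. By density of $\mathscr{C}'_\infty$, choose $\psi\in\mathscr{C}'_\infty$ with $\norm{u-\psi}_{\mathscr{X}_\infty}<\varepsilon$. We then split
\begin{align}
\norm{\Phi_n B_\infty u-B_nu_n}_{\mathscr{X}_n}
\le \norm{\Phi_n B_\infty u-B_n\Phi_n\psi}_{\mathscr{X}_n}
+M\norm{\Phi_n\psi-\Phi_n u}_{\mathscr{X}_n}
+M\norm{\Phi_n u-u_n}_{\mathscr{X}_n}.
\end{align}
The third term tends to $0$ by the choice of $u_n$.

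For the second term, $\Phi_n$ is linear, so $\Phi_n\psi-\Phi_n u=\Phi_n(\psi-u)$. By \eqref{banachconv}, its norm tends to $\norm{\psi-u}_{\mathscr{X}_\infty}<\varepsilon$, and the limsup of the term is at most $M\varepsilon$.

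For the first term, the hypothesis and Corollary~\ref{kyocor} give $\lim_n\norm{\Phi_nB_\infty\psi-B_n\Phi_n\psi}_{\mathscr{X}_n}=0$. It therefore remains to bound $\norm{\Phi_nB_\infty u-\Phi_nB_\infty\psi}_{\mathscr{X}_n}=\norm{\Phi_n B_\infty(u-\psi)}_{\mathscr{X}_n}$. By \eqref{banachconv} again, this tends to $\norm{B_\infty(u-\psi)}_{\mathscr{X}_\infty}\le\norm{B_\infty}_{\mathrm{op}}\varepsilon$.

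Combining the three pieces,
\begin{align}
\limsup_{n\to\infty}\norm{\Phi_nB_\infty u-B_nu_n}_{\mathscr{X}_n}\le(\norm{B_\infty}_{\mathrm{op}}+M)\varepsilon .
\end{align}
Since $\varepsilon$ is arbitrary, the limsup is $0$, and Corollary~\ref{kyocor} gives strong convergence of $B_nu_n$ to $B_\infty u$.

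The only delicate point is that $\Phi_n$ need not be uniformly bounded. This is handled by applying \eqref{banachconv} to the fixed vectors $u-\psi$ and $B_\infty(u-\psi)$, since $\psi$ is chosen before letting $n\to\infty$, so no uniform bound on $\Phi_n$ is needed.
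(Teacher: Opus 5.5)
Your proof is correct. The reduction through Corollary~\ref{kyocor}, the three-term splitting with $M=\sup_n\norm{B_n}_{\mathrm{op}}$, and the application of \eqref{banachconv} to the fixed vectors $u-\psi$ and $B_\infty(u-\psi)$ all go through, and you are right that no uniform bound on $\Phi_n$ is needed. The paper gives no proof of this lemma and simply cites T\"olle. Your $\varepsilon/3$ density argument, which uses uniform boundedness of the $B_n$, is the standard route for statements of this kind.
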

\begin{prp}[{\cite[Theorem 2.21]{Jtolle}}]\label{daizi}
Assume that $\mathscr{C}_{\infty}=\mathscr{X}_{\infty}$ and 
$\supover{n}\norm{\Phi_{n}}_{\mathrm{op}}$ $< \infty$ $($$\Phi_{n} \in \mathscr{L}(\mathscr{X}_{\infty},\mathscr{X}_{n})$ is implied$)$. 
Let $\{T_{t} \}_{t \geq0}$ $($resp.\ $\{T^{(n)}_{t} \}_{t \geq0}$$)$ be a strongly continuous 
contraction semigroup on $\mathscr{X}_{\infty}$ $($resp.\ $\mathscr{X}_{n}$$)$ and 
$\{G_{\alpha} \}_{\alpha >0}$ $($resp.\ $\{G^{(n)}_{\alpha} \}_{\alpha >0}$$)$ be its 
resolvent on $\mathscr{X}_{\infty}$ $($resp.\ $\mathscr{X}_{n}$$)$. 
Then, $\{T^{(n)}_{t} \}_{n \in \N} \,(T^{(n)}_{t}\in \mathscr{L}(\mathscr{X}_{n}))$ 
strongly converges to $T_{t}\in \mathscr{L}(\mathscr{X}_{\infty})$ for any $t \geq 0$ 
if $\{G^{(n)}_{\alpha} \}_{n \in \N}\,(G^{(n)}_{\alpha}\in \mathscr{L}(\mathscr{X}_{n}))$ 
strongly converges to $G_{\alpha}\in \mathscr{L}(\mathscr{X}_{\infty})$ for some $\alpha >0$.
\end{prp}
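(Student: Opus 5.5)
The plan is to prove the statement by a Trotter--Kato type argument adapted to the Kuwae--Shioya framework. Since $\norm{T^{(n)}_{t}}_{\mathrm{op}} \le 1$, Lemma~\ref{tech3} reduces the claim to the following: for every $\psi$ in a suitable dense subspace $\mathscr{C}^{\prime}_{\infty}$ of $\mathscr{X}_{\infty}$, the sequence $\{T^{(n)}_{t}\Phi_{n}\psi\}_{n}$ strongly converges to $T_{t}\psi$. By Corollary~\ref{kyocor}, this is the same as
\begin{align}
\limtoinfty{n}\norm{T^{(n)}_{t}\Phi_{n}\psi-\Phi_{n}T_{t}\psi}_{\mathscr{X}_{n}}=0 .
\end{align}
Throughout, write $M\eqdot\supover{n}\norm{\Phi_{n}}_{\mathrm{op}}<\infty$.

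First I would make the hypothesis quantitative. By Corollary~\ref{kyocor}, strong convergence of $G^{(n)}_{\alpha}$ to $G_{\alpha}$ means that, for each fixed $y\in\mathscr{X}_{\infty}$,
\begin{align}
\limtoinfty{n}\norm{G^{(n)}_{\alpha}\Phi_{n}y-\Phi_{n}G_{\alpha}y}_{\mathscr{X}_{n}}=0 .
\end{align}
The operators $D_{n}\eqdot G^{(n)}_{\alpha}\Phi_{n}-\Phi_{n}G_{\alpha}$ satisfy $\norm{D_{n}}_{\mathrm{op}}\le 2M/\alpha$, so they are equicontinuous. Hence the convergence $D_{n}y\to0$ is uniform for $y$ in any compact subset of $\mathscr{X}_{\infty}$. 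If resolvents at other parameters are needed later, I would extend the convergence from $\alpha$ to $\beta$ with $|\beta-\alpha|<\alpha$ via the Neumann series $G_{\beta}=\sum_{k}(\alpha-\beta)^{k}G_{\alpha}^{k+1}$, which converges uniformly in $n$. Strong convergence is preserved under finite products of uniformly bounded operators, so each partial sum converges strongly, and chaining such steps covers all $\beta>0$. In fact, I expect a single $\alpha$ to suffice.

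The core step is a Duhamel-type identity. Let $A$ and $A_{n}$ denote the generators and set $x\in\mathscr{X}_{\infty}$, $u\eqdot G_{\alpha}x\in D(A)$. Differentiating $s\mapsto T^{(n)}_{t-s}G^{(n)}_{\alpha}\Phi_{n}T_{s}u$ and using $G^{(n)}_{\alpha}(\alpha-A_{n})=I$, $(\alpha-A)G_{\alpha}=I$ and $AT_{s}u=T_{s}Au$, I aim for
\begin{align}
G^{(n)}_{\alpha}\bigl(\Phi_{n}T_{t}-T^{(n)}_{t}\Phi_{n}\bigr)u
=\int_{0}^{t}T^{(n)}_{t-s}\,D_{n}\,T_{s}(\alpha-A)u\,ds
=\int_{0}^{t}T^{(n)}_{t-s}\,D_{n}\,T_{s}x\,ds .
\end{align}
The integrand is bounded in norm by $\norm{D_{n}T_{s}x}_{\mathscr{X}_{n}}$. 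The set $\{T_{s}x: s\in[0,t]\}$ is compact by strong continuity, so by the previous paragraph the integrand tends to $0$ uniformly in $s$. Therefore
\begin{align}
\limtoinfty{n}\norm{G^{(n)}_{\alpha}\bigl(\Phi_{n}T_{t}-T^{(n)}_{t}\Phi_{n}\bigr)G_{\alpha}x}_{\mathscr{X}_{n}}=0 .
\end{align}

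Finally I remove the outer $G^{(n)}_{\alpha}$. Take $\psi=G_{\alpha}^{2}x$, which ranges over a dense subspace $\mathscr{C}^{\prime}_{\infty}$, since $G_{\alpha}^{2}\mathscr{X}_{\infty}=D(A^{2})$ is dense for a strongly continuous semigroup. Using that $G_{\alpha}$ commutes with $T_{t}$, and that $G^{(n)}_{\alpha}$ commutes with $T^{(n)}_{t}$, I write
\begin{align}
\Phi_{n}T_{t}\psi-T^{(n)}_{t}\Phi_{n}\psi
=D_{n}T_{t}G_{\alpha}x+G^{(n)}_{\alpha}\bigl(\Phi_{n}T_{t}-T^{(n)}_{t}\Phi_{n}\bigr)G_{\alpha}x-T^{(n)}_{t}D_{n}G_{\alpha}x .
\end{align}
The first and third terms tend to $0$ by the convergence $D_{n}y\to0$ and contractivity of $T^{(n)}_{t}$, and the middle term tends to $0$ by the previous step. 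Lemma~\ref{tech3} then yields the strong convergence of $T^{(n)}_{t}$ to $T_{t}$. I expect the main obstacle to be the rigorous justification of the Duhamel identity: differentiability of the map $s\mapsto T^{(n)}_{t-s}G^{(n)}_{\alpha}\Phi_{n}T_{s}u$, and keeping track of the fact that $\Phi_{n}$ does not intertwine the generators. Together with this comes the uniformity in $s$, which is exactly where the hypothesis $\supover{n}\norm{\Phi_{n}}_{\mathrm{op}}<\infty$ enters through equicontinuity.
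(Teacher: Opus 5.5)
The paper gives no proof of this proposition; it quotes it from T\"olle's thesis, so there is no in-paper argument to compare against. Your proposal is correct: it is the standard Trotter--Kato proof, built on Pazy's resolvent--Duhamel identity, carried over to the Kuwae--Shioya setting. The uniform bound $\norm{D_{n}}_{\mathrm{op}}\le 2M/\alpha$ gives uniform convergence on the compact orbit $\{T_{s}x\}_{s\in[0,t]}$. The set $G_{\alpha}^{2}\mathscr{X}_{\infty}$ is dense, and Corollary~\ref{kyocor} together with Lemma~\ref{tech3} finish the argument.

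There are two sign slips, harmless because you only estimate norms. With $D_{n}=G^{(n)}_{\alpha}\Phi_{n}-\Phi_{n}G_{\alpha}$, the identity is actually
\begin{align}
G^{(n)}_{\alpha}\bigl(\Phi_{n}T_{t}-T^{(n)}_{t}\Phi_{n}\bigr)G_{\alpha}x=-\int_{0}^{t}T^{(n)}_{t-s}D_{n}T_{s}x\,ds .
\end{align}
In your final decomposition the first and third terms should read $-D_{n}T_{t}G_{\alpha}x$ and $+T^{(n)}_{t}D_{n}G_{\alpha}x$.

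The Neumann-series extension to other parameters $\beta$ is not needed. The differentiability you flag as the main obstacle is routine. The generator $A_{n}$ only acts on elements of $G^{(n)}_{\alpha}\mathscr{X}_{n}$, via $A_{n}G^{(n)}_{\alpha}=\alpha G^{(n)}_{\alpha}-I$. The generator $A$ only acts on $T_{s}u$ with $u=G_{\alpha}x$. Since $G^{(n)}_{\alpha}\Phi_{n}$ is bounded, the usual product rule applies, and $\Phi_{n}$ never needs to intertwine the generators.
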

\subsection{Semi-Dirichlet forms}
We mainly follow \cite{oshima2013semi} for the definition and basic properties of 
semi-Dirichlet forms. For any $a,b \in \R$, we set $a \wedge b =\mathrm{min}\{a,b\}$
 and $a \vee b =\mathrm{max}\{a,b\}$. Let $\mathscr{H}$ be a Hilbert space, $\mathcal{F}$ a dense subspace of $\mathscr{H}$, and 
$\mathcal{A} \colon \mathcal{F} \times \mathcal{F} \rightarrow \R$ a bilinear form. 
For $\alpha \geq 0$, we define a bilinear form
$\mathcal{A}_{\alpha} \colon \mathcal{F} \times \mathcal{F} \rightarrow \R$ by 
\begin{equation}
\mathcal{A}_{\alpha}(u,v)
\coloneqq
\mathcal{A}(u,v)
+\alpha (u,v)_{\mathscr{H}}, \quad u,v \in \mathcal{F}.
\end{equation}
Also, we define $\mathcal{A}(u) \coloneqq \mathcal{A}(u,u)$ for $u \in \mathcal{F}$.
\begin{dfn}[{\cite[p.\ 1]{oshima2013semi}}] 
$(\mathcal{A},\mathcal{F})$ is called a closed form on $\mathscr{H}$ if the following hold: 
\begin{enumerate}
    \item[$\mathrm{(SD1)}$]
    There exists $\lambda \geq 0$ such that $\mathcal{A}_{\lambda}(u) \geq 0$ 
    for any $u \in  \mathcal{F}$.
    \item[$\mathrm{(SD2)}$]
    $\mathcal{A}_{\lambda+1}(\cdot)^{1/2}$ is a complete norm on $\mathcal{F}$.
    \item[$\mathrm{(SD3)}$]
    There exists $K \geq 1$ such that
    \begin{equation}
    \abs{\mathcal{A}(u,v)}
    \leq
    K\mathcal{A}_{\lambda}(u)^{1/2}\mathcal{A}_{\lambda}(v)^{1/2}, 
    \quad u,v \in  \mathcal{F}.
    \end{equation}
\end{enumerate}
When $\mathscr{H} = L^2(X,\mu)$ with a measure space $(X,\mu)$, 
a closed form $(\mathcal{A},\mathcal{F})$ on $L^2(X,\mu)$ is called a semi-Dirichlet form 
on $L^2(X,\mu)$ if the following holds in addition:
\begin{enumerate}[resume]
\item[$\mathrm{(SD4)}$]
For any $f \in \mathcal{F}$ and $a \geq 0$, we have $f \wedge a \in \mathcal{F}$ 
and $\mathcal{A}(f \wedge a,f-f \wedge a) \geq 0$.
\end{enumerate}
Condition $\mathrm{(SD4)}$ is called the Markov property. 

Also, a semi-Dirichlet form $(\mathcal{A},\mathcal{F})$ on $L^2(X,\mu)$ is called 
a Dirichlet form on $L^2(X,\mu)$ if $\mathcal{A}$ is symmetric and 
we can take $0$ as the constant $\lambda$ in $\mathrm{(SD1)}$.
\end{dfn}
\begin{prp}[{\cite[Theorem 1.1.2]{oshima2013semi}}]
Let $(\mathcal{A},\mathcal{F})$ be a closed form on $\mathscr{H}$ with the constant $\lambda$ in $\mathrm{(SD1)}$. Then, there exists a unique strongly continuous 
semigroup $\{T_{t} \}_{t \geq0}$ on $\mathscr{H}$ such that 
$\norm{T_{t}}_{\mathrm{op}} \leq e^{\lambda t}$ for $t\geq0$, and its resolvent 
defined by 
\begin{equation}
    G_{\alpha}u \coloneqq
 \int_{0}^{\infty}e^{-\alpha t}T_{t}u \,dt \in \mathscr{H}, \quad u \in \mathscr{H}, 
 \quad \alpha > \lambda
\end{equation}
satisfies 
\begin{equation}\label{characterresolvent}
\mathcal{A}_{\alpha}(G_{\alpha}u,v)=(u,v)_{\mathscr{H}}, 
\quad u\in \mathscr{H},v \in \mathcal{F}, \alpha > \lambda.
\end{equation}
\end{prp}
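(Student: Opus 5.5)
The plan is to build the resolvent first, using the non-symmetric Lax--Milgram theorem, and then obtain the semigroup from the Hille--Yosida theorem.

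\textbf{Step 1: the resolvent.} Fix $\alpha>\lambda$ and equip $\mathcal{F}$ with the norm $\mathcal{A}_{\lambda+1}(\cdot)^{1/2}$, which is complete by (SD2). By (SD1),
\begin{equation}
\mathcal{A}_\alpha(u)=\mathcal{A}_\lambda(u)+(\alpha-\lambda)\norm{u}_{\mathscr{H}}^2\ge \min\{1,\alpha-\lambda\}\,\mathcal{A}_{\lambda+1}(u),
\end{equation}
so $\mathcal{A}_\alpha$ is coercive. It is bounded by (SD3) together with Cauchy--Schwarz for the inner product. For $u\in\mathscr{H}$, the map $v\mapsto (u,v)_{\mathscr{H}}$ is a bounded functional on $\mathcal{F}$. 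Lax--Milgram therefore gives a unique $G_\alpha u\in\mathcal{F}$ satisfying \eqref{characterresolvent}.

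\textbf{Step 2: basic properties of $G_\alpha$.} Linearity of $G_\alpha$ follows from uniqueness. Testing with $v=G_\alpha u$ gives
\begin{equation}
(\alpha-\lambda)\norm{G_\alpha u}^2\le \mathcal{A}_\alpha(G_\alpha u)=(u,G_\alpha u),
\end{equation}
hence $\norm{G_\alpha}_{\mathrm{op}}\le 1/(\alpha-\lambda)$. The resolvent equation $G_\alpha-G_\beta=(\beta-\alpha)G_\alpha G_\beta$ also follows from uniqueness: both sides satisfy the same characterizing identity, because $\mathcal{A}_\alpha(G_\beta u,v)=(u,v)+(\alpha-\beta)(G_\beta u,v)$.

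\textbf{Step 3: strong continuity, $\alpha G_\alpha u\to u$ as $\alpha\to\infty$.} This is the key step. First take $u\in\mathcal{F}$ and set $w=\alpha G_\alpha u-u$. Then $\mathcal{A}_\alpha(w,v)=-\mathcal{A}(u,v)$ for all $v\in\mathcal{F}$. Putting $v=w$ and using (SD3),
\begin{equation}
\mathcal{A}_\lambda(w)+(\alpha-\lambda)\norm{w}^2\le K\mathcal{A}_\lambda(u)^{1/2}\mathcal{A}_\lambda(w)^{1/2}\le \mathcal{A}_\lambda(w)+\tfrac{K^2}{4}\mathcal{A}_\lambda(u).
\end{equation}
Hence $\norm{w}^2\le K^2\mathcal{A}_\lambda(u)/(4(\alpha-\lambda))\to0$. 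Since $\norm{\alpha G_\alpha}_{\mathrm{op}}\le\alpha/(\alpha-\lambda)$ is bounded as $\alpha\to\infty$, and $\mathcal{F}$ is dense in $\mathscr{H}$, the convergence extends to all $u\in\mathscr{H}$. Two consequences follow: the range of $G_\alpha$ is dense, and $G_\alpha$ is injective (if $G_\alpha u=0$, the resolvent equation gives $G_\beta u=0$ for all $\beta$, so $u=\lim\beta G_\beta u=0$).

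\textbf{Step 4: the semigroup.} Define $L\eqdot\alpha-G_\alpha^{-1}$ on $D(L)\eqdot G_\alpha(\mathscr{H})$. By the resolvent equation this is independent of $\alpha$, and by Step 3 it is densely defined. Then $L-\lambda$ satisfies $\norm{(\beta-(L-\lambda))^{-1}}_{\mathrm{op}}\le 1/\beta$ for every $\beta>0$. Hille--Yosida yields a strongly continuous contraction semigroup $S_t$ with generator $L-\lambda$. Setting $T_t\eqdot e^{\lambda t}S_t$ gives $\norm{T_t}_{\mathrm{op}}\le e^{\lambda t}$, and its Laplace transform is $(\alpha-L)^{-1}=G_\alpha$ for $\alpha>\lambda$, so \eqref{characterresolvent} holds.

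\textbf{Step 5: uniqueness.} Suppose another semigroup with $\norm{T'_t}_{\mathrm{op}}\le e^{\lambda t}$ has a resolvent satisfying \eqref{characterresolvent}. By Lax--Milgram uniqueness its resolvent equals $G_\alpha$ for every $\alpha>\lambda$, so it has the same generator. A strongly continuous semigroup is determined by its generator, hence $T'_t=T_t$.

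I expect Step 3 to be the main obstacle: it is where the non-symmetric sector condition (SD3) is used to obtain both density of $D(L)$ and injectivity of $G_\alpha$.
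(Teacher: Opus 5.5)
Your argument is correct, and it is the standard proof of this result. The paper gives no proof of its own and simply cites Oshima's Theorem 1.1.2; the usual proof there is the one you give: construct $G_\alpha$ via Lax--Milgram, use the sector condition $\mathrm{(SD3)}$ to get $\alpha G_\alpha u\to u$, then apply Hille--Yosida to $L-\lambda$.

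One minor simplification is available. Injectivity of $G_\alpha$ needs no detour through the resolvent equation: $G_\alpha u=0$ gives $(u,v)_{\mathscr{H}}=\mathcal{A}_\alpha(0,v)=0$ for all $v$ in the dense subspace $\mathcal{F}$, hence $u=0$.
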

\begin{rem}
Note that \eqref{characterresolvent} characterizes the above resolvent and also 
that $\norm{G_{\alpha}}_{\mathrm{op}} \leq (\alpha -\lambda)^{-1}$ holds.
\end{rem}
\begin{rem}
When $\mathscr{H} = L^2(X,\mu)$, the Markov property $\mathrm{(SD4)}$ is equivalent to 
the following $($see \textup{\cite[Theorem 1.1.5]{oshima2013semi}}$)$: If $f \in L^2(X, \mu)$ satisfies $0 \leq f \leq 1 \,\mu \text{-a.e.}$, then 
$0 \leq T_{t}f \leq 1 \,\mu \text{-a.e.}$
\end{rem}
\begin{prp}[{\cite[Theorem 2.41]{Jtolle}}]\label{rslvconv}
Assume that $\mathscr{H}_{\infty}$ is separable. Let $(\mathcal{A},\mathcal{F})$ 
$($resp.\ $(\mathcal{A}^{n},\mathcal{F}^{n})$$)$
be a closed form on $\mathscr{H}_{\infty}$ $($resp.\ $\mathscr{H}_{n}$$)$ with the constant $\lambda$ in $\mathrm{(SD1)}$ and $\{G_{\alpha} \}_{\alpha >\lambda}$
$($resp.\ $\{G^{(n)}_{\alpha} \}_{\alpha >\lambda}$$)$ its resolvent. 
Moreover, assume that the following two conditions hold:
\begin{enumerate}
\item[$\mathrm{(F1)}$]
Let $n_{k} \uparrow \infty, u_{k} \in \mathcal{F}^{n_{k}}, u \in \mathscr{H}_{\infty}$ and 
assume that $\{ u_{k} \}_{k \in \N}$ weakly converges to $u$ and 
$\supover{k} \mathcal{A}^{n_{k}}_{\lambda+1}(u_{k}) < \infty$. Then, $u \in \mathcal{F}$.
\item[$\mathrm{(F2)}$]
Let $n_{k} \uparrow \infty, u_{k} \in \mathcal{F}^{n_{k}}, u, w \in \mathscr{F}$ and 
assume that $\{ u_{k} \}_{k \in \N}$ weakly converges to $u$ and 
$\supover{k} \mathcal{A}^{n_{k}}_{\lambda+1}(u_{k}) < \infty$. Then there exists 
$\{ w_{k} \}_{k \in \N} \,(w_{k} \in \mathcal{F}^{n_{k}}
\subset \mathscr{H}_{n_{k}})$ such that $\{ w_{k} \}_{k \in \N}$ strongly converges to $w$ and 
\begin{align}
\liminftoinfty{k} \mathcal{A}^{n_{k}}(w_{k},u_{k}) \leq \mathcal{A}(w,u).
\end{align}
\end{enumerate}
Then, for any $\alpha >\lambda$, $\{G^{(n)}_{\alpha} \}_{n \in \N}
\,(G^{(n)}_{\alpha}\in \mathscr{L}(\mathscr{H}_{n}))$ strongly converges to 
$G_{\alpha}\in \mathscr{L}(\mathscr{H}_{\infty})$.
\end{prp}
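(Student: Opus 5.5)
The plan is to fix $\alpha>\lambda$ and a sequence $f_{n}\in\mathscr{H}_{n}$ converging strongly to $f\in\mathscr{H}_{\infty}$, and to show that $G^{(n)}_{\alpha}f_{n}$ converges strongly to $G_{\alpha}f$. I would argue along subsequences: it suffices to show that every subsequence has a further subsequence along which the convergence holds. Since (F2) puts the approximating sequence in the \emph{second} argument, while the resolvent identity \eqref{characterresolvent} puts $G_{\alpha}u$ in the \emph{first}, I would work first with the coresolvents $\hat G^{(n)}_{\alpha}$, $\hat G_{\alpha}$. These are the resolvents of the dual forms $\hat{\mathcal{A}}(u,v)=\mathcal{A}(v,u)$, which are again closed forms with the same $\lambda$ and $K$, and they are characterized by $\mathcal{A}_{\alpha}(v,\hat G_{\alpha}g)=(v,g)_{\mathscr{H}}$. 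Then $G^{(n)}_{\alpha}$ is the Hilbert adjoint of $\hat G^{(n)}_{\alpha}$.

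\textbf{Step 1 (a priori bounds).} Put $u_{n}\eqdot\hat G^{(n)}_{\alpha}f_{n}$. From (SD1), $\mathcal{A}^{n}_{\alpha}(u_{n})\ge(\alpha-\lambda)\norm{u_{n}}^{2}$, and this equals $(u_{n},f_{n})$. Hence $\norm{u_{n}}\le(\alpha-\lambda)^{-1}\norm{f_{n}}$, which is bounded by Proposition~\ref{kyo1}~\ref{enum:2kyo}. Moreover $\mathcal{A}^{n}_{\lambda+1}(u_{n})\le\mathcal{A}^{n}_{\alpha}(u_{n})+(\lambda+1)\norm{u_{n}}^{2}$ is bounded as well.

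\textbf{Step 2 (weak limit and identification).} Using the separability of $\mathscr{H}_{\infty}$, a diagonal argument over a countable dense subset of $\mathscr{C}_{\infty}$ together with Proposition~\ref{wk1}~\ref{enum:1wk} extracts a subsequence $u_{k}=u_{n_{k}}$ converging weakly to some $u$. By (F1), $u\in\mathcal{F}$. For $w\in\mathcal{F}$, (F2) gives $w_{k}$ converging strongly to $w$ with $\liminf_{k}\mathcal{A}^{n_{k}}(w_{k},u_{k})\le\mathcal{A}(w,u)$. Adding $\alpha(w_{k},u_{k})\to\alpha(w,u)$, which converges by weak--strong pairing, and using $\mathcal{A}^{n_{k}}_{\alpha}(w_{k},u_{k})=(w_{k},f_{k})\to(w,f)$ by Proposition~\ref{kyo1}~\ref{enum:7kyo}, we obtain $(w,f)\le\mathcal{A}_{\alpha}(w,u)$. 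Replacing $w$ by $-w$ turns this into an equality, so $u=\hat G_{\alpha}f$. By uniqueness of weak limits (Proposition~\ref{wk1}~\ref{enum:5wk}), the whole sequence $\hat G^{(n)}_{\alpha}f_{n}$ converges weakly to $\hat G_{\alpha}f$.

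\textbf{Step 3 (back to $G_{\alpha}$, weak).} Let $g_{n}\to g$ strongly. Then $(G^{(n)}_{\alpha}f_{n},g_{n})=(f_{n},\hat G^{(n)}_{\alpha}g_{n})\to(f,\hat G_{\alpha}g)=(G_{\alpha}f,g)$ by Step 2 applied to $g_{n}$. Combined with the uniform bound $\norm{G^{(n)}_{\alpha}}_{\mathrm{op}}\le(\alpha-\lambda)^{-1}$, this shows that $G^{(n)}_{\alpha}f_{n}$ converges weakly to $G_{\alpha}f$.

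\textbf{Step 4 (upgrade to strong convergence), the main obstacle.} By Proposition~\ref{kyo1}~\ref{enum:8kyo}, it remains to prove $\limsup_{n}\norm{G^{(n)}_{\alpha}f_{n}}\le\norm{G_{\alpha}f}$. The natural route is to apply (F2) with the weakly convergent sequence $v_{n}\eqdot G^{(n)}_{\alpha}f_{n}$ placed in the second slot, taking the target $w=G_{\alpha}f$. I would then expand $0\le\mathcal{A}^{n}_{\alpha}(v_{n}-w_{n})$ and control each term: the cross term $\mathcal{A}^{n}_{\alpha}(v_{n},w_{n})=(f_{n},w_{n})$ converges; the term $\mathcal{A}^{n}_{\alpha}(w_{n},v_{n})$ is controlled by (F2); and $\mathcal{A}^{n}_{\alpha}(v_{n})=(f_{n},v_{n})\to(f,G_{\alpha}f)$. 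The delicate term is $\mathcal{A}^{n}_{\alpha}(w_{n})$, which needs a limsup bound by $\mathcal{A}_{\alpha}(w)$. If that is not directly available, I would try two fallbacks. The first is to apply (F2) a second time, now to the sequence $w_{n}$ itself, after checking via (SD3) that it has bounded energy. The second is to exploit the resolvent identity $G_{\alpha}-G_{\beta}=(\beta-\alpha)G_{\alpha}G_{\beta}$: weak convergence for every $\alpha$ then gives weak convergence of $G^{(n)}_{\alpha}G^{(n)}_{\beta}f_{n}$, and pairing $v_{n}$ against a strongly convergent sequence yields $\norm{v_{n}}^{2}\to\norm{G_{\alpha}f}^{2}$. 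Once this norm convergence is established, strong convergence of $G^{(n)}_{\alpha}$ to $G_{\alpha}$ follows.
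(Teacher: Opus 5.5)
Your Steps 1--3 are sound, but Step 4 has a genuine gap, and none of the three routes you sketch closes it.

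\textbf{Main route.} In the expansion of $\mathcal{A}^{n}_{\alpha}(v_{n}-w_{n})$, the term $\mathcal{A}^{n}_{\alpha}(w_{n},v_{n})$ enters with a minus sign. To bound the expression from above you therefore need a \emph{lower} bound on it. Condition (F2) gives only an \emph{upper} bound on its $\liminf$, so it controls this term in the wrong direction. Nothing in (F1), (F2) gives a $\limsup$ bound on the quadratic term $\mathcal{A}^{n}_{\alpha}(w_{n})$ either. There is no recovery-sequence hypothesis, $w_{n}$ is only known to converge in norm, and the sector constants in (SD3) for $\mathcal{A}^{n}$ are not assumed uniform in $n$.

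\textbf{First fallback.} It fails for the same reason. Applying (F2) to $\{w_{n}\}$, whose energy need not even be bounded, produces a \emph{new} sequence $w'_{n}$. It then controls $\mathcal{A}^{n}(w'_{n},w_{n})$, not $\mathcal{A}^{n}(w_{n},w_{n})$.

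\textbf{Second fallback.} This is the symmetric-case trick. In general $\norm{G^{(n)}_{\alpha}f_{n}}^{2}=(f_{n},\hat G^{(n)}_{\alpha}G^{(n)}_{\alpha}f_{n})$, which equals $(f_{n},(G^{(n)}_{\alpha})^{2}f_{n})$ only when the forms are symmetric. The resolvent identity controls products $G^{(n)}_{\alpha}G^{(n)}_{\beta}$ but says nothing about the mixed product $\hat G^{(n)}_{\alpha}G^{(n)}_{\alpha}$. Pairing $v_{n}$ with a strongly convergent sequence just reproduces weak convergence.

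\textbf{The missing observation.} Your Step 2 argument proves more than you used: it goes through verbatim when $f_{n}$ merely converges \emph{weakly} to $f$.
\begin{itemize}
\item The a priori bounds need only $\sup_{n}\norm{f_{n}}<\infty$.
\item The test sequence $w_{k}$ supplied by (F2) converges strongly, so $(w_{k},f_{k})\to(w,f)$ and $(w_{k},u_{k})\to(w,u)$ still hold.
\end{itemize}
Hence $\hat G^{(n)}_{\alpha}$ maps weakly convergent sequences to weakly convergent ones, with limit operator $\hat G_{\alpha}$.

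\textbf{Closing Step 4.} Put $v_{n}=G^{(n)}_{\alpha}f_{n}$, which converges weakly to $G_{\alpha}f$ by your Step 3. Then $\hat G^{(n)}_{\alpha}v_{n}$ converges weakly to $\hat G_{\alpha}G_{\alpha}f$. Since $f_{n}$ converges strongly,
\[
\norm{v_{n}}^{2}=(f_{n},\hat G^{(n)}_{\alpha}v_{n})\to(f,\hat G_{\alpha}G_{\alpha}f)=\norm{G_{\alpha}f}^{2}.
\]
Together with weak convergence, Proposition~\ref{kyo1}~\ref{enum:8kyo} gives strong convergence. This duality argument (strong convergence of $B_{n}$ follows from weak convergence of the adjoints $B_{n}^{*}$, here $B_{n}=G^{(n)}_{\alpha}$ and $B_{n}^{*}=\hat G^{(n)}_{\alpha}$) is, as far as I can tell, the route behind the cited theorem of T\"olle, which follows Hino. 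The paper itself only quotes that theorem without proof.
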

In this section, from now on, we assume that $(X,d)$ is a compact metric space and 
$\mu$ is a Borel probability measure on $X$ with full support. Let
$C(X)$ denote the space of all continuous real functions on $X$ 
with the uniform norm $\norm{\cdot}_{\infty}$.
\begin{dfn}[{\cite[Corollary 5.2.3]{oshima2013semi}}]
Let $(\mathcal{A},\mathcal{F})$ be a semi-Dirichlet form on $L^2(X, \mu)$ and assume that 
$\mathcal{F} \subset C(X)$. Then, $(\mathcal{A},\mathcal{F})$ is called strongly local 
if the following holds: For $f,g \in \mathcal{F}$, $\mathcal{A}(f,g)=0$ holds if 
there exists an open set $U \subset X$ such that 
$\mathop{\mathrm{supp}} g \subset U$ and $f$ is constant on $U$.
\end{dfn}
\begin{dfn}[{\cite[p.\ 110]{FOT}}]\label{defitionenergymeasure}
    Let $(\mathcal{E},\mathcal{F})$ be a Dirichlet form on $L^2(X, \mu)$ and assume that 
    $\mathcal{F} \subset C(X)$ and that $\mathcal{F}$ is dense in 
    $(C(X),\norm{\cdot}_{\infty})$. Then, for any $f \in \mathcal{F}$, 
    there exists a unique finite Borel measure $\nu_{f}$ on $X$ which satisfies 
    \begin{align}
        \int_{X}g\,d\nu_{f}=2\mathcal{E}(fg,f)-\mathcal{E}(g,f^{2}), 
        \quad g \in \mathcal{F}.
    \end{align}
     The measure $\nu_{f}$ is called the energy measure of $f$. Also, for any 
     $f,g \in \mathcal{F}$, we define a signed measure $\nu_{f,g}$ by 
    \begin{equation}
        \nu_{f,g} \coloneqq \frac{1}{2}(\nu_{f+g}-\nu_{f}-\nu_{g}).
    \end{equation}
    $\nu_{f,g}$ is called the mutual energy measure of $f$ and $g$. Then, we have 
    $\abs{\nu_{f,g}(B)} \leq \nu_{f}(B)^{1/2}\nu_{g}(B)^{1/2}$
    for any Borel set $B$ of $X$.
\end{dfn}
\begin{rem}
If $1 \in \mathcal{F}$ and $\mathcal{E}(1)=0$ in addition, then $\nu_{f}(X)=2\mathcal{E}(f)$ for any $f \in \mathcal{F}$.
\end{rem}
The next lemma is straightforward and we omit the proof.
\begin{lem}\label{enerzero}
Let $(\mathcal{E},\mathcal{F})$ be a Dirichlet form on $L^2(X, \mu)$ and assume that 
    $1 \in \mathcal{F} \subset C(X)$, $\mathcal{F}$ is dense in 
    $(C(X),\norm{\cdot}_{\infty})$ and $(\mathcal{E},\mathcal{F})$ is strongly local. 
    Then, $\nu_{f}(G)=0$ holds for any $f \in \mathcal{F}$ and an open set $G \subset X$ 
    such that $f$ is constant on $G$.
\end{lem}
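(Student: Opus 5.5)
The plan is to test $\nu_{f}$ against functions $g \in \mathcal{F}$ supported in $G$, show that all such integrals vanish using strong locality, and then pass to $\nu_{f}(G)=0$ by approximation and inner regularity. Throughout, let $c$ denote the constant value of $f$ on $G$.

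\emph{Step 1: the constant function has zero energy.} Applying strong locality with the function $1$ and the open set $U=X$ (on which $1$ is constant, and which contains $\mathop{\mathrm{supp}} g$ for every $g$) gives $\mathcal{E}(1,g)=0$ for all $g \in \mathcal{F}$.

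\emph{Step 2: integrals against test functions in $G$ vanish.} Take $g \in \mathcal{F}$ with $\mathop{\mathrm{supp}} g \subset G$. We need two facts:
\begin{itemize}
\item[(a)] $fg$ and $(f-c)^2$ belong to $\mathcal{F}$, because bounded elements of a Dirichlet form's domain form an algebra and $\mathcal{F} \subset C(X)$ with $X$ compact;
\item[(b)] $fg = cg$ on all of $X$, because $g=0$ off $G$ and $f=c$ on $G$.
\end{itemize}
By (b), $2\mathcal{E}(fg,f)=2c\,\mathcal{E}(g,f)$. Next write
\begin{equation}
f^2 = 2cf - c^2 + (f-c)^2 .
\end{equation}
By Step 1 the constant term contributes nothing, so
\begin{equation}
\mathcal{E}(g,f^2)=2c\,\mathcal{E}(g,f)+\mathcal{E}\bigl((f-c)^2,g\bigr).
\end{equation}
The last term vanishes by strong locality, since $(f-c)^2 \equiv 0$ on the open set $G \supset \mathop{\mathrm{supp}} g$; here symmetry of $\mathcal{E}$ lets us put $(f-c)^2$ in the first slot. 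Substituting into the defining identity of Definition~\ref{defitionenergymeasure} gives $\int_X g\,d\nu_{f}=0$.

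\emph{Step 3: from test functions to $\nu_{f}(G)$.} This is the main obstacle, since density of $\mathcal{F}$ in $C(X)$ does not by itself control supports. Fix a compact $K \subset G$ and $\varepsilon \in (0,1/3)$.
\begin{itemize}
\item[(i)] By Urysohn's lemma choose $\varphi \in C(X)$ with $0 \le \varphi \le 1$, $\varphi=1$ on $K$, and $\mathop{\mathrm{supp}}\varphi \subset G$.
\item[(ii)] By density choose $g \in \mathcal{F}$ with $\norm{g-\varphi}_{\infty}<\varepsilon$.
\item[(iii)] Set $h \coloneqq (g-2\varepsilon)\vee 0$. The map $t \mapsto (t-2\varepsilon)\vee 0$ is a normal contraction, so $h \in \mathcal{F}$ by the Markov property of the Dirichlet form.
\end{itemize}
Then $h \ge 0$, and $h \ge 1-3\varepsilon$ on $K$. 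Moreover $h=0$ wherever $\varphi<\varepsilon$, so $\mathop{\mathrm{supp}} h \subset \{\varphi \ge \varepsilon\} \subset G$. Step 2 applies to $h$ and gives
\begin{equation}
(1-3\varepsilon)\,\nu_{f}(K) \le \int_X h\,d\nu_{f} = 0 .
\end{equation}
Hence $\nu_{f}(K)=0$ for every compact $K \subset G$. Since $\nu_{f}$ is a finite Borel measure on a compact metric space, it is inner regular, and therefore $\nu_{f}(G)=0$.
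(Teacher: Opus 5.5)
Your proof is correct, and it is the standard argument the paper has in mind when it omits the proof as straightforward: first show $\int_X g\,d\nu_f=0$ for every $g\in\mathcal{F}$ with $\mathop{\mathrm{supp}} g\subset G$, then pass to $\nu_f(G)=0$ using the cutoff $h=(g-2\varepsilon)\vee 0$ and inner regularity. Two steps can be shortened: $f^{2}$ itself equals $c^{2}$ on $G$, so strong locality gives $\mathcal{E}(g,f)=\mathcal{E}(g,f^{2})=0$ directly, making Step 1 and the decomposition of $f^{2}$ unnecessary; and since $h=g-g\wedge 2\varepsilon$, $h\in\mathcal{F}$ follows directly from (SD4) as stated in the paper, with no need to invoke normal contractions.
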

\subsection{Resistance forms}
We mainly follow \cite{kigami2001analysis} to introduce the resistance form. 
For a set $A$, $l(A)$ denotes the set of all real functions on $A$.
\begin{dfn}[{\cite[Definition 2.3.1]{kigami2001analysis}}]
Let $X$ be a set, $\mathcal{F}$ a subspace of $l(X)$ containing all the constant functions on 
$X$, and $\mathcal{E} \colon \mathcal{F} \times \mathcal{F} \rightarrow \R$ a bilinear form. 
Then, $(\mathcal{E},\mathcal{F})$ is called a resistance form on $X$ if the following hold:
\begin{enumerate}
\item[$\mathrm{(RF1)}$]
$\mathcal{E}(g,f)=\mathcal{E}(f,g)$ and $\mathcal{E}(f,f) \geq 0$ for $f,g \in \mathcal{F}$. 
Also, $\mathcal{E}(f,f)=0$ if and only if $f$ is a constant function.
\item[$\mathrm{(RF2)}$]
$\left( \mathcal{F}/ {\sim}, \mathcal{E} \right)$ is a Hilbert space, where 
$\sim$ is an equivalence relation on $\mathcal{F}$ defined by 
$f \sim g$ if $f-g$ is a constant function.
\item[$\mathrm{(RF3)}$]
For any finite subset $V \subset X$ and $f^{\prime} \in l(V)$, there exists some
$f \in \mathcal{F}$ such that $f \lvert_{V} =f^{\prime}$ holds.
\item[$\mathrm{(RF4)}$]
For any $x,y \in X$, we have 
\begin{align}
    R(x,y) \coloneqq \sup \left\{\frac{\abs{f(x)-f(y)}^{2}}{\mathcal{E}(f)} \middle |
    f \in \mathcal{F},f \, \text{is not constant}\right\} < \infty.
\end{align}
\item[$\mathrm{(RF5)}$]
For any $f \in \mathcal{F}$, we have $0 \vee f \wedge 1 \in \mathcal{F}$ and 
$\mathcal{E}(0 \vee f \wedge 1)\leq \mathcal{E}(f)$.
\end{enumerate}
Then, $R$ in $\mathrm{(RF4)}$ is a metric on $X$, called the resistance metric associated 
with $(\mathcal{E},\mathcal{F})$.

In what follows, $(\mathcal{E},\mathcal{F})$ is a resistance form on $X$ and $V$ is a non-empty finite subset of $X$.
\end{dfn}
\begin{prp}[{\cite[Lemma 2.3.5 and Theorem 2.3.4]{kigami2001analysis}}]\label{resistancetrace}
For any $f^{\prime} \in l(V)$, there exists a unique 
$\widehat{f^{\prime}} \in \mathcal{F}$ such that 
$\widehat{f^{\prime}}\lvert_{V}=f^{\prime}$ and 
$\mathcal{E}(\widehat{f^{\prime}})=
\min \{ \mathcal{E}(f) \mid f \in \mathcal{F}, f \lvert_{V}=f^{\prime} \}$. 
By defining $\mathcal{E}^{\prime} \colon l(V) \times l(V) \rightarrow \R$ by 
\begin{equation}
\mathcal{E}^{\prime}(f^{\prime},g^{\prime})
\coloneqq
\mathcal{E}(\widehat{f^{\prime}},\widehat{g^{\prime}}),
\quad f^{\prime},g^{\prime} \in l(V),
\end{equation}
$(\mathcal{E}^{\prime},l(V))$ is a resistance form on $V$. 
Also, $R^{\prime}=R \lvert_{V \times V}$ holds, where 
$R$ $($resp.\ $R^{\prime}$$)$ is the resistance metric on $X$ $($resp.\ $V$$)$ associated with 
$(\mathcal{E},\mathcal{F})$ $($resp.\ $(\mathcal{E}^{\prime},l(V))$$)$.
\end{prp}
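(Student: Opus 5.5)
The plan is to obtain $\widehat{f'}$ as an orthogonal projection in the Hilbert space of (RF2). Then I check (RF1)--(RF5) for $\mathcal{E}'$ directly, and read off $R'=R|_{V\times V}$ from the minimality of $\widehat{f'}$.

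\emph{Existence and uniqueness of $\widehat{f'}$.} Fix $p\in V$ and set $\mathcal{F}_p\coloneqq\{f\in\mathcal{F}\mid f(p)=0\}$. The map $f\mapsto[f]$ is a linear bijection $\mathcal{F}_p\to\mathcal{F}/{\sim}$, so by (RF2) $(\mathcal{F}_p,\mathcal{E})$ is a Hilbert space. By (RF4), $\abs{f(x)}^2=\abs{f(x)-f(p)}^2\le R(x,p)\mathcal{E}(f)$ for $f\in\mathcal{F}_p$. Hence every evaluation $f\mapsto f(x)$ is a bounded linear functional on $\mathcal{F}_p$.

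Consequently $K\coloneqq\{f\in\mathcal{F}_p\mid f|_V=f'-f'(p)\}$ is a closed affine subspace of $\mathcal{F}_p$, and it is nonempty by (RF3) after subtracting a constant. The projection theorem gives a unique element of $K$ of minimal norm. Adding the constant $f'(p)$ gives $\widehat{f'}$.

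Uniqueness in all of $\mathcal{F}$ follows because $f\mapsto f-f'(p)$ is an energy-preserving bijection from $\{f\in\mathcal{F}\mid f|_V=f'\}$ onto $K$; here we use that constants have zero energy by (RF1). The minimizer is characterized by
\[
\mathcal{E}(\widehat{f'},g)=0\quad\text{for all } g\in\mathcal{F} \text{ with } g|_V=0 .
\]
From this, $f'\mapsto\widehat{f'}$ is linear, so $\mathcal{E}'$ is a well-defined symmetric bilinear form on $l(V)$.

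\emph{Axioms for $(\mathcal{E}',l(V))$.}
\begin{itemize}
\item (RF1): $\mathcal{E}'(f')=0$ iff $\widehat{f'}$ is constant. If $f'$ is constant, its constant extension has zero energy, so it is the minimizer. Hence $\mathcal{E}'(f')=0$ iff $f'$ is constant.
\item (RF2): $l(V)/{\sim}$ is finite-dimensional with a positive definite inner product, hence a Hilbert space.
\item (RF3): This is trivial, since every function on a subset of $V$ extends to $V$.
\item (RF5): $0\vee\widehat{f'}\wedge1$ belongs to $\mathcal{F}$ and restricts to $0\vee f'\wedge1$ on $V$. Minimality and (RF5) for $\mathcal{E}$ give
\[
\mathcal{E}'(0\vee f'\wedge1)\le\mathcal{E}(0\vee\widehat{f'}\wedge1)\le\mathcal{E}(\widehat{f'})=\mathcal{E}'(f').
\]
\end{itemize}

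\emph{(RF4) and $R'=R|_{V\times V}$.} Fix $x,y\in V$. Every $\widehat{f'}$ with $f'$ nonconstant is an admissible competitor in the supremum defining $R(x,y)$. Therefore $R'(x,y)\le R(x,y)<\infty$, which gives (RF4).

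Conversely, take a nonconstant $f\in\mathcal{F}$ and put $f'\coloneqq f|_V$. If $f'$ is constant, then $f(x)=f(y)$ and the ratio in the supremum is $0$. Otherwise $\mathcal{E}(f)\ge\mathcal{E}(\widehat{f'})=\mathcal{E}'(f')>0$, so
\[
\frac{\abs{f(x)-f(y)}^2}{\mathcal{E}(f)}\le\frac{\abs{f'(x)-f'(y)}^2}{\mathcal{E}'(f')}\le R'(x,y).
\]
Taking the supremum over $f$ gives $R\le R'$ on $V\times V$.

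The only step that is not bookkeeping is the first one: the closedness of $K$. It relies on (RF4) making point evaluations continuous, together with the identification of $\mathcal{F}_p$ with $\mathcal{F}/{\sim}$. Everything else follows from minimality and linearity of the extension.
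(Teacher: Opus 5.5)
Your proof is correct. The paper gives no argument of its own here and simply imports the result from Kigami's book, and your proposal is the standard proof: the harmonic extension is the minimal-norm element of a closed affine subspace of the Hilbert space $\mathcal{F}/{\sim}\cong\{f\in\mathcal{F}\mid f(p)=0\}$ (closed because of (RF4)), the axioms pass to the trace by minimality, and $R'=R|_{V\times V}$ follows from the two-sided comparison. The two steps you leave implicit are one-line checks: $\mathcal{E}(f,c)=0$ for constants $c$ follows from $\mathcal{E}(c)=0$ by Cauchy--Schwarz, and for the converse of the orthogonality characterization, if $u|_V=f'$ and $\mathcal{E}(u,g)=0$ whenever $g|_V=0$, then $\mathcal{E}(v)=\mathcal{E}(u)+\mathcal{E}(v-u)$ for every $v$ with $v|_V=f'$.
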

\begin{dfn}[{\cite[Definition 8.3]{MR2919892}}]
In the notation of Proposition~\ref{resistancetrace}, $\widehat{f^{\prime}} \in \mathcal{F}$ is called the harmonic extension of $f^{\prime} \in l(V)$ 
with respect to $(\mathcal{E},\mathcal{F})$ and 
$(\mathcal{E}^{\prime},l(V))$ is called the trace of $(\mathcal{E},\mathcal{F})$ onto $V$ and usually denoted by $\mathop{\mathrm{Tr}}(\mathcal{E} \lvert V)$.
\end{dfn}
Let $\mu$ be a Borel probability measure on $X$ with full support and assume that $(X,R)$ is compact. Since $\mathcal{F}$ is an algebra (from e.g.\ \cite[Lemma~6.5]{MR2919892}), $\mathcal{F}$ is dense in 
$(C(X),\norm{\cdot}_{\infty})$ by the Stone--Weierstrass theorem. From 
\cite[p.\ 65]{kigami2001analysis}, $(\mathcal{E},\mathcal{F})$ is a 
Dirichlet form on $L^2(X,\mu)$. 
Also, the following propositions hold. These are standard and well-known results in the theory of resistance forms and we omit their proofs.
\begin{prp}\label{kihon}
For any $f \in \mathcal{F}$, 
\begin{equation}
\norm{f}_{\infty} \leq
\left(\mathrm{diam}_{R}X\right)^{1/2} \mathcal{E}(f)^{1/2}+\norm{f}_{L^{2}(X,\mu)}
\end{equation}
holds, where $\mathrm{diam}_{R}X$ denotes the diameter of $X$ with respect to $R$. In particular, we have 
\begin{equation}
    \norm{f}_{\infty} 
    \leq\left( \left(\mathrm{diam}_{R}X\right)^{1/2}+1 \right) \mathcal{E}_{1}(f)^{1/2}.
\end{equation}
\end{prp}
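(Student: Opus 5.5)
The plan is to compare $f$ pointwise with its value at a suitably chosen point and then integrate against $\mu$. The basic input is (RF4): for every $f\in\mathcal{F}$ and all $x,y\in X$,
\begin{equation}
\abs{f(x)-f(y)}^{2} \leq R(x,y)\,\mathcal{E}(f).
\end{equation}
If $f$ is constant, then $\mathcal{E}(f)=0$ by (RF1) and the inequality is trivial. Otherwise it is exactly the definition of $R(x,y)$ as a supremum. Bounding $R(x,y)\leq \mathrm{diam}_{R}X$, which is finite because $(X,R)$ is compact, gives
\begin{equation}
\abs{f(x)-f(y)} \leq \left(\mathrm{diam}_{R}X\right)^{1/2}\mathcal{E}(f)^{1/2}, \quad x,y\in X.
\end{equation}

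Next, I will choose $y$ so that $\abs{f(y)}\leq \norm{f}_{L^{2}(X,\mu)}$. One can do this without any continuity argument. Suppose $\abs{f(y)}>\norm{f}_{L^{2}(X,\mu)}$ held for every $y\in X$. Integrating $\abs{f}^{2}$ against the probability measure $\mu$ would give $\norm{f}_{L^{2}}^{2}>\norm{f}_{L^{2}}^{2}$, which is a contradiction.

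Here one should note that $f$ is continuous with respect to $R$, by the displayed estimate, and hence Borel measurable. The integral is therefore meaningful. In fact only the fact that $\mu$ is a probability measure is used. Full support is not needed.

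With such a $y$, the triangle inequality gives, for every $x$,
\begin{equation}
\abs{f(x)}\leq \abs{f(x)-f(y)}+\abs{f(y)}\leq \left(\mathrm{diam}_{R}X\right)^{1/2}\mathcal{E}(f)^{1/2}+\norm{f}_{L^{2}(X,\mu)}.
\end{equation}
Taking the supremum over $x$ yields the first claim.

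For the second claim, write $a=\mathcal{E}(f)^{1/2}$ and $b=\norm{f}_{L^{2}}$, and note that $\mathcal{E}_{1}(f)=a^{2}+b^{2}$. Since $a,b\leq (a^{2}+b^{2})^{1/2}$, we get $D^{1/2}a+b\leq (D^{1/2}+1)(a^{2}+b^{2})^{1/2}$ with $D=\mathrm{diam}_{R}X$.

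There is no real obstacle here. The only point needing a moment's care is the existence of a point where $\abs{f}$ is at most its $L^{2}$ norm, and the measurability of $f$ that this argument implicitly requires.
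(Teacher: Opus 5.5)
Your proof is correct. The paper omits this proof as a standard fact, and your argument is the standard one: the resistance estimate $\abs{f(x)-f(y)}^{2}\le R(x,y)\,\mathcal{E}(f)\le (\mathrm{diam}_{R}X)\,\mathcal{E}(f)$, combined with a point $y$ where $\abs{f(y)}\le\norm{f}_{L^{2}(X,\mu)}$, gives the first bound. (Equivalently, integrate in $y$ against the probability measure $\mu$ and use $\norm{f}_{L^{1}(X,\mu)}\le\norm{f}_{L^{2}(X,\mu)}$.) The elementary inequality $(\mathrm{diam}_{R}X)^{1/2}a+b\le((\mathrm{diam}_{R}X)^{1/2}+1)(a^{2}+b^{2})^{1/2}$ then gives the second.
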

\begin{prp}\label{cptop}
The embedding $i_{\mathcal{F}} \colon \mathcal{F} \rightarrow C(X)$ 
from the Hilbert space $(\mathcal{F}, \mathcal{E}_{1})$ to the Banach space 
$\left(C(X),\norm{\cdot}_{\infty} \right)$ is a compact operator.
\end{prp}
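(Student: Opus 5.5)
The statement to prove is Proposition~\ref{cptop}: the embedding $i_{\mathcal{F}}$ of $(\mathcal{F},\mathcal{E}_1)$ into $(C(X),\norm{\cdot}_{\infty})$ is compact. I would use the Arzel\`a--Ascoli theorem on the compact metric space $(X,R)$.

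First, by Proposition~\ref{kihon}, $i_{\mathcal{F}}$ is bounded. Let $B=\{f\in\mathcal{F}\mid \mathcal{E}_1(f)\le 1\}$. Every $f\in B$ satisfies $\norm{f}_\infty\le (\mathrm{diam}_R X)^{1/2}+1$, which is finite because $(X,R)$ is compact. So $B$ is uniformly bounded in $C(X)$.

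Second, $B$ is equicontinuous. The definition of $R$ in (RF4) gives, for every $f\in\mathcal{F}$ and $x,y\in X$,
\begin{equation}
\abs{f(x)-f(y)}^2\le R(x,y)\,\mathcal{E}(f).
\end{equation}
This holds trivially when $f$ is constant. Hence every $f\in B$ is $1/2$-H\"older with constant $1$ with respect to $R$:
\begin{equation}
\abs{f(x)-f(y)}\le R(x,y)^{1/2}\,\mathcal{E}(f)^{1/2}\le R(x,y)^{1/2}.
\end{equation}
In particular $\mathcal{F}\subset C(X)$, so $i_{\mathcal{F}}$ is well defined.

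By Arzel\`a--Ascoli, $B$ is relatively compact in $(C(X),\norm{\cdot}_\infty)$. Since the image of the unit ball is relatively compact, $i_{\mathcal{F}}$ is a compact operator.

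Only one point needs care: the topology on $X$ must be the one induced by $R$. This is exactly the standing assumption that $(X,R)$ is compact, and $C(X)$ and $\mu$ refer to that topology. Apart from this the argument is routine.
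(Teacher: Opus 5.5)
Your argument is correct. Proposition~\ref{kihon} gives a uniform sup-norm bound on the $\mathcal{E}_1$-unit ball, the inequality $\abs{f(x)-f(y)}^2\le R(x,y)\,\mathcal{E}(f)$ (immediate from (RF4)) makes that ball uniformly $1/2$-H\"older, and Arzel\`a--Ascoli on the compact space $(X,R)$ finishes. The paper omits this proof as standard, and your argument is exactly the standard one it relies on.
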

\section{Problem settings}
\subsection{Basic settings}
We recall the problem setting as mentioned in Section 1. 
Let $X$ be a set and $(\mathcal{E},\mathcal{F})$ a resistance form on $X$. 
Assume that the metric space $(X,R)$ is compact, where $R$ denotes the 
resistance metric associated with $(\mathcal{E},\mathcal{F})$. Let $\mu$ be 
a Borel probability measure on $X$ with full support and assume that 
$(\mathcal{E},\mathcal{F})$ is a strongly local Dirichlet form on 
$L^2(X,\mu)$. For $f,g \in \mathcal{F}$, $\nu_{f,g}$ denotes the mutual 
energy measure of $f$ and $g$ as in Definition~\ref{defitionenergymeasure}. Fix a natural number $N$, 
$b_{1},\dots,b_{N} \in C(X)$ and $h_{1},\dots,h_{N} \in \mathcal{F}$. Let
$\{ V_{n} \}_{n \in \N}$ be a non-decreasing sequence of nonempty finite 
subsets of $X$ and assume that $V_{*} \eqdot \bigcup_{n=1}^{\infty} V_{n}$ is dense 
in $(X,R)$. For each $n \in N$, we define $\mathcal{E}^{n}$, a resistance form on $V_{n}$, 
by $\mathcal{E}^{n} \eqdot \mathrm{Tr}(\mathcal{E} \lvert V_{n})$. Let $\mu_{n}$ 
be a probability measure on $V_{n}$ which satisfies $\mu_{n}( \{ x\}) >0$ for all 
$x \in V_{n}$ and assume that $\{ \mu_{n} \}_{n \in \N}$ converges weakly to $\mu$. 
For each $1 \leq i \leq N$, we define 
$\mathcal{Q}_{i} \colon \mathcal{F} \times \mathcal{F} \rightarrow \R$ by
\begin{equation}
\mathcal{Q}_{i}(f,g) \eqdot \frac{1}{2}\int_{X} b_{i}g \,d\nu_{f,h_{i}}
, \quad f,g \in \mathcal{F},
\end{equation}
and we define 
$\mathcal{Q}, \mathcal{A} \colon \mathcal{F} \times \mathcal{F} \rightarrow \R$ by
\begin{align}
\mathcal{Q} \eqdot \sum_{i=1}^{N}\mathcal{Q}_{i} \text{ and }
\mathcal{A} \eqdot \mathcal{E}+\mathcal{Q}.
\end{align}
For each $n \in \N$, $\{c_{n,x,y} \}_{\substack{x ,y \in V_{n} \\x \neq y}} \subset \nonnegaR$ is uniquely defined by 
\begin{align}\label{coefofen}
&\mathcal{E}^{n}(f,g)= 
\frac{1}{2}
\sum_{
\substack{
x ,y \in V_{n} \\
x \neq y}
}
c_{n,x,y}(f(x)-f(y))(g(x)-g(y)),\\
&\quad f,g \in l(V_{n}),\,
 c_{n,y,x}=c_{n,x,y}, \quad x ,y \in V_{n} ,x \neq y.
\end{align}
For each $1 \leq i \leq N$, we define 
$\mathcal{Q}^{n}_{i} \colon l(V_{n}) \times l(V_{n}) \rightarrow \R$ by 
\begin{align}
 &\mathcal{Q}^{n}_{i}(f,g) \eqdot
 \frac{1}{2}
 \sum_{
\substack{
x ,y \in V_{n} \\
x \neq y}
}
c_{n,x,y}b_{i}(x)g(x)(f(x)-f(y))(h_{i}(x)-h_{i}(y)), \\
&\quad f,g \in l(V_{n}).
\end{align} 
Also, we define 
$\mathcal{Q}^{n},\mathcal{A}^{n} \colon l(V_{n}) \times l(V_{n}) \rightarrow \R$ by 
\begin{align}
\mathcal{Q}^{n} \eqdot \sum_{i=1}^{N}\mathcal{Q}^{n}_{i}, \,
\mathcal{A}^{n}:=\mathcal{E}^{n}+\mathcal{Q}^{n}.
\end{align}
Let $R_{n}$ be the resistance metric on $V_{n}$ associated with $\mathcal{E}^{n}$. Note that we have 
$R_{n}=R\lvert_{V_{n}\times V_{n}}$. Also, for $f \in l(V_{n})$, 
$\widehat{f} \in \mathcal{F}$ denotes the harmonic extension of $f$ on $X$ 
with respect to $(\mathcal{E},\mathcal{F})$. First, note the following elementary lemma.
\begin{lem} \label{easy}
Let $\{ f^{(n)}\}_{n \in \N}\subset C(X)$ and $f\in C(X)$. If $\limtoinfty{n}\norm{f^{(n)}-f}_{\infty}=0$,
then 
$\limtoinfty{n}
\int_{V_{n}}f^{(n)} \lvert_{V_{n}}\,d\mu_{n}
=\int_{X}f\,d\mu$.
\end{lem}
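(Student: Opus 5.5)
The plan is to split the difference into two pieces: the change of integrand on $\mu_n$, controlled uniformly, and the change of measure from $\mu_n$ to $\mu$ at the fixed limit $f$, controlled by weak convergence. Concretely, write
\begin{align}
\int_{V_{n}}f^{(n)}\lvert_{V_{n}}\,d\mu_{n}-\int_{X}f\,d\mu
=\int_{V_{n}}\bigl(f^{(n)}-f\bigr)\lvert_{V_{n}}\,d\mu_{n}
+\left(\int_{V_{n}}f\lvert_{V_{n}}\,d\mu_{n}-\int_{X}f\,d\mu\right).
\end{align}

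For the first term, I will use that $\mu_{n}$ is a probability measure. This gives
\begin{align}
\abs{\int_{V_{n}}\bigl(f^{(n)}-f\bigr)\lvert_{V_{n}}\,d\mu_{n}}\leq \sup_{x\in V_{n}}\abs{f^{(n)}(x)-f(x)}\leq\norm{f^{(n)}-f}_{\infty},
\end{align}
and the right-hand side tends to $0$ by hypothesis.

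For the second term, I regard $\mu_{n}$ as a Borel probability measure on $X$ supported on the finite set $V_{n}$. This is the sense in which "$\mu_{n}$ converges weakly to $\mu$" is meant. Then $\int_{V_{n}}f\lvert_{V_{n}}\,d\mu_{n}=\int_{X}f\,d\mu_{n}$. Since $f\in C(X)$ is bounded and continuous on the compact space $(X,R)$, weak convergence yields $\int_{X}f\,d\mu_{n}\to\int_{X}f\,d\mu$.

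Adding the two estimates proves the claim. There is no real obstacle here. The only point needing care is this identification of $\mu_{n}$ with its pushforward under the inclusion $V_{n}\hookrightarrow X$, which lets the weak convergence hypothesis be applied to the fixed test function $f$.
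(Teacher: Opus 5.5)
Your proof is correct. It is the standard argument: the integrand difference is bounded by $\norm{f^{(n)}-f}_{\infty}$ because $\mu_{n}$ is a probability measure, and weak convergence of $\mu_{n}$, viewed as a measure on $X$ supported on $V_{n}$, is applied to the fixed $f\in C(X)$. The paper states this lemma as elementary and gives no proof, so this is presumably the argument it has in mind.
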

From Lemma~\ref{easy}, we immediately have the following: 
\begin{prp} \label{KS1}
For $n \in \N$, we define $\Phi_{n} \colon C(X) \rightarrow L^{2}(V_n, \mu_{n})$ by 
$\Phi_{n}(f)=f\lvert_{V_n}$. Then, $\{L^{2}(V_n, \mu_{n}) \}_{n \in \N}$ converges to $L^{2}(X, \mu)$ 
in the sense of Definition~\ref{ksspaceconve}.
\end{prp}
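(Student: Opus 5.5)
We need to check the condition of Definition~\ref{ksspaceconve}. Take $\mathscr{X}_{n}\eqdot L^{2}(V_n,\mu_n)$, $\mathscr{X}_{\infty}\eqdot L^{2}(X,\mu)$ and $\mathscr{C}_{\infty}\eqdot C(X)$. Then we must show two things: that $C(X)$ is a dense subspace of $L^{2}(X,\mu)$ and that $\Phi_n$ is a well-defined linear map, and that
\[
\lim_{n\to\infty}\|\Phi_n\varphi\|_{L^{2}(V_n,\mu_n)}=\|\varphi\|_{L^{2}(X,\mu)}\quad\text{for all }\varphi\in C(X).
\]

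Here $C(X)$ is not literally a subset of $L^{2}(X,\mu)$; it embeds through the map $f\mapsto$ (its $\mu$-class). Since $\mu$ has full support, two continuous functions that agree $\mu$-a.e.\ agree everywhere. Indeed, the set where they differ is open, hence either empty or of positive measure. So this map is injective, and we may identify $C(X)$ with a subspace of $L^{2}(X,\mu)$. Density is standard: $X$ is a compact metric space and $\mu$ is a finite Borel measure, hence regular, so continuous functions are dense in $L^{2}(X,\mu)$ (Lusin, or Urysohn approximation of indicators of closed sets). $\Phi_n$ is clearly linear. It is well defined because $V_n$ is finite with $\mu_n(\{x\})>0$, so $l(V_n)=L^{2}(V_n,\mu_n)$ and no a.e.\ ambiguity arises.

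For the norm convergence, fix $\varphi\in C(X)$. Apply Lemma~\ref{easy} to the constant sequence $f^{(n)}\eqdot\varphi^{2}$ and $f\eqdot\varphi^{2}$, which is continuous. This gives
\[
\int_{V_n}\varphi^{2}|_{V_n}\,d\mu_n\to\int_X\varphi^{2}\,d\mu.
\]
Taking square roots yields $\|\Phi_n\varphi\|_{L^{2}(V_n,\mu_n)}\to\|\varphi\|_{L^{2}(X,\mu)}$.

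Lemma~\ref{easy} itself follows from weak convergence of $\mu_n$ to $\mu$ (viewed as measures on $X$ supported on $V_n$). It uses the estimate
\[
\Bigl|\int f^{(n)}\,d\mu_n-\int f\,d\mu_n\Bigr|\le\|f^{(n)}-f\|_\infty,
\]
which holds because the $\mu_n$ are probability measures. The only mildly delicate point is the identification of $C(X)$ inside $L^{2}(X,\mu)$ via full support, and this is where I would be careful; everything else is immediate.
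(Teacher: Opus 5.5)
Your proposal is correct and follows the paper's route: the paper obtains the proposition immediately from Lemma~\ref{easy} applied to $\varphi^{2}$, exactly as you do. Your additional checks are details the paper leaves implicit here. They are the density of $C(X)$ in $L^{2}(X,\mu)$ and the injectivity of $f\mapsto[f]$ via $\mathop{\mathrm{supp}}\mu=X$, which is what makes $\Phi_n$ well defined on equivalence classes; the paper records the injectivity only later, when it introduces $\iota$.
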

For the rest of this section, we consider Kuwae--Shioya's framework defined by
Proposition~\ref{KS1} unless otherwise specified.
\begin{prp}\label{prp1}
Let $\{ f^{(n)}\}_{n \in \N}\subset C(X)$ and $f\in C(X)$ and assume that $\limtoinfty{n}\norm{f^{(n)}-f}_{\infty}=0$. Then 
$f^{(n)} \lvert_{V_{n}} \in L^{2}(V_{n}, \mu_{n})$ 
strongly K-S converges to $f \in L^{2}(X, \mu)$.
\end{prp}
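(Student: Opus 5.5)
Since $C(X)$ is dense in $L^{2}(X,\mu)$ and $\Phi_{n}$ is defined on all of $C(X)$, the Kuwae--Shioya framework of Proposition~\ref{KS1} has $\mathscr{C}_{\infty}=C(X)$. To verify strong convergence I will take the constant approximating sequence $\varphi_{m}\coloneqq f\in C(X)$ for every $m$. Then $\norm{\varphi_{m}-f}_{L^{2}(X,\mu)}=0$ trivially, and it remains to show
\begin{equation}
\limtoinfty{n}\norm{\Phi_{n}f-f^{(n)}\lvert_{V_{n}}}_{L^{2}(V_{n},\mu_{n})}=0 .
\end{equation}
Equivalently, by Remark~\ref{rem3} $\Phi_{n}f$ strongly converges to $f$, so Proposition~\ref{kyo1}~\ref{enum:4kyo} reduces the claim to this same display.

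The display follows from a uniform bound. Since $\mu_{n}$ is a probability measure on $V_{n}$,
\begin{equation}
\norm{(f-f^{(n)})\lvert_{V_{n}}}_{L^{2}(V_{n},\mu_{n})}^{2}
=\int_{V_{n}}\abs{f-f^{(n)}}^{2}\,d\mu_{n}
\leq \norm{f-f^{(n)}}_{\infty}^{2},
\end{equation}
and the right-hand side tends to $0$ by hypothesis. With $\varphi_{m}=f$, this gives $\limtoinfty{m}\limsuptoinfty{n}\norm{\Phi_{n}\varphi_{m}-f^{(n)}\lvert_{V_{n}}}=0$, which is exactly the definition of strong convergence.

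There is no real obstacle here. The only point to check is that $f$ itself is an admissible element of $\mathscr{C}_{\infty}=C(X)$, which holds because $f$ is continuous. Lemma~\ref{easy} is not even needed beyond its use in establishing Proposition~\ref{KS1}.
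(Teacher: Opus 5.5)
Your proof is correct, but it takes a different route from the paper's.

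The paper verifies the Hilbert-space characterization of strong convergence, Proposition~\ref{kyo1}~\ref{enum:8kyo}. It shows $\norm{f^{(n)}\lvert_{V_n}}_{L^2(V_n,\mu_n)}\to\norm{f}_{L^2(X,\mu)}$ and $(f^{(n)}\lvert_{V_n},g\lvert_{V_n})_{L^2(V_n,\mu_n)}\to(f,g)_{L^2(X,\mu)}$ for every $g\in C(X)$. Each limit comes from applying Lemma~\ref{easy} to the uniformly convergent sequences $(f^{(n)})^2\to f^2$ and $f^{(n)}g\to fg$, so the weak convergence $\mu_n\to\mu$ is used explicitly at this step.

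You instead go straight to the definition with the constant approximating sequence $\varphi_m=f$, which is equivalent to Remark~\ref{rem3} plus Proposition~\ref{kyo1}~\ref{enum:4kyo}. The only estimate you need is $\norm{\cdot}_{L^2(V_n,\mu_n)}\le\norm{\cdot}_\infty$, which holds because $\mu_n$ is a probability measure. In your argument the weak convergence of $\mu_n$ enters only through the legitimacy of the framework in Proposition~\ref{KS1}.

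Your version is shorter and uses no inner-product structure, so it works verbatim in Banach-space frameworks. It is exactly the mechanism behind Corollary~\ref{kyocor}, which the paper itself uses later in the sup-norm setting of Proposition~\ref{main2}. The paper's test-function check gains nothing extra for this particular statement.
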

\begin{proof}
We have $\limtoinfty{n}\norm{(f^{(n)})^{2}-f^{2}}_{\infty}=0$ from 
$\limtoinfty{n}\norm{f^{(n)}-f}_{\infty}=0$. Thus, by Lemma~\ref{easy}, we 
also have 
$\limtoinfty{n}\norm{f^{(n)} \lvert_{V_{n}}}_{L^{2}(V_{n}, \mu_{n})} 
=\norm{f}_{L^{2}(X, \mu)}$. 

Next, for an arbitrary $g \in C(X)$, we have 
$\limtoinfty{n}\norm{f^{(n)}g-fg}_{\infty}=0$ from 
$\limtoinfty{n}\norm{f^{(n)}-f}_{\infty}=0$. Thus, again by Lemma~\ref{easy}, 
we have
\begin{align}
\limtoinfty{n}(f^{(n)} \lvert_{V_{n}},g\lvert_{V_{n}})_{L^{2}(V_{n}, \mu_{n})} 
=(f,g)_{L^{2}(X, \mu)}.
\end{align}
By Proposition~\ref{kyo1}~\ref{enum:8kyo}, the conclusion holds.
\end{proof}
From Propositions~\ref{prp1} and \ref{cptop}, we have the following:
\begin{cor}\label{cor1}
If $\{ f^{(n)}\}_{n \in \N} \subset \mathcal{F}$ weakly converges to 
$f \in \mathcal{F}$ in the Hilbert space $(\mathcal{F}, \mathcal{E}_{1})$, 
then $\{f^{(n)}\lvert_{V_{n}} \}_{n \in \N} 
\,\left( f^{(n)} \lvert_{V_{n}} \in L^{2}(V_n, \mu_{n}) \right)$ 
strongly K-S converges to $f \in L^{2}(X, \mu)$.
\end{cor}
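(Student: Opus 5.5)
The plan is to combine Proposition~\ref{cptop} with Proposition~\ref{prp1}. The goal is to upgrade weak convergence in $(\mathcal{F},\mathcal{E}_{1})$ to uniform convergence on $X$, and then read off strong K-S convergence from Proposition~\ref{prp1}.

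First, weakly convergent sequences in a Hilbert space are bounded. Hence $\sup_{n}\mathcal{E}_{1}(f^{(n)})<\infty$.

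Next, we use the fact that a compact operator between Banach spaces maps weakly convergent sequences to norm convergent ones. We recall the argument briefly. Since $i_{\mathcal{F}}$ is bounded and linear, $i_{\mathcal{F}}f^{(n)}\to i_{\mathcal{F}}f$ weakly in $C(X)$. Compactness says that every subsequence of $\{i_{\mathcal{F}}f^{(n)}\}$ has a further subsequence converging in $\norm{\cdot}_{\infty}$ to some $g\in C(X)$. This norm limit must coincide with the weak limit $f$. Therefore the whole sequence satisfies $\norm{f^{(n)}-f}_{\infty}\to0$.

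Finally, every $f^{(n)}$ and $f$ lie in $\mathcal{F}\subset C(X)$, and we have uniform convergence. Proposition~\ref{prp1} then gives that $f^{(n)}\lvert_{V_{n}}$ strongly K-S converges to $f$ in $L^{2}(X,\mu)$.

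There is no real obstacle. The only point needing care is the subsequence argument that identifies the uniform limit with $f$.
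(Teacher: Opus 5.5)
Your proposal is correct and follows the paper's route exactly. The paper derives this corollary directly from Proposition~\ref{cptop}, using the compact embedding to turn weak convergence in $(\mathcal{F},\mathcal{E}_{1})$ into uniform convergence, and then applies Proposition~\ref{prp1}; you have additionally written out the standard subsequence argument (a compact operator maps weakly convergent sequences to norm-convergent ones) that the paper leaves implicit.
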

\begin{prp}\label{prp2}
Let $f_{n} \in L^{2}(V_n, \mu_{n})$ for $n \in \N$ and 
assume $\supover{n}\mathcal{E}^{n}_{1}(f_{n}) < \infty$. Then, there 
exist a subsequence $\{ f_{n_{k}} \}_{k \in \N}$ of $\{ f_{n} \}_{n \in \N}$ 
and $f \in  \mathcal{F}$ such that
$\{ \widehat{f_{n_{k}}} \}_{k \in \N}$ weakly converges to $f$ 
in the Hilbert space $(\mathcal{F}, \mathcal{E}_{1})$.
\end{prp}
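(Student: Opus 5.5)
The plan is to show that $\{\widehat{f_n}\}_{n\in\N}$ is bounded in the Hilbert space $(\mathcal{F},\mathcal{E}_1)$. Since bounded sets in a Hilbert space are weakly sequentially compact, a subsequence then converges weakly to some $f\in\mathcal{F}$.

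The energy part is immediate. By Proposition~\ref{resistancetrace} and the definition $\mathcal{E}^n=\mathrm{Tr}(\mathcal{E}\lvert V_n)$, we have $\mathcal{E}(\widehat{f_n})=\mathcal{E}^n(f_n)$. Hence $\sup_n\mathcal{E}(\widehat{f_n})\le \sup_n \mathcal{E}^n_1(f_n)=:M<\infty$.

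The $L^2(X,\mu)$ part needs more care, since $\mu$ and $\mu_n$ are different measures. I would bound $\norm{\widehat{f_n}}_\infty$ and then use $\norm{\widehat{f_n}}_{L^2(X,\mu)}\le\norm{\widehat{f_n}}_\infty$, valid because $\mu$ is a probability measure.

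\textbf{Step 1 (oscillation bound).} For $x,y\in X$, (RF4) gives
\[
\abs{\widehat{f_n}(x)-\widehat{f_n}(y)}^2\le R(x,y)\,\mathcal{E}(\widehat{f_n})\le (\mathrm{diam}_R X)\,M.
\]
The right-hand side is finite because $(X,R)$ is compact.

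\textbf{Step 2 (one point value).} Since $\mu_n$ is a probability measure on $V_n$, there is a point $x_n\in V_n$ with $\abs{f_n(x_n)}\le \norm{f_n}_{L^2(V_n,\mu_n)}\le M^{1/2}$. Otherwise $\abs{f_n}^2>M$ everywhere on $V_n$, which contradicts $\norm{f_n}^2_{L^2(V_n,\mu_n)}\le M$.

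\textbf{Step 3 (uniform bound).} Using $\widehat{f_n}\lvert_{V_n}=f_n$ together with Steps 1 and 2,
\[
\norm{\widehat{f_n}}_\infty\le \abs{f_n(x_n)}+\bigl((\mathrm{diam}_R X)M\bigr)^{1/2}\le \bigl(1+(\mathrm{diam}_R X)^{1/2}\bigr)M^{1/2}.
\]
This is the discrete analogue of Proposition~\ref{kihon}. It gives $\sup_n\mathcal{E}_1(\widehat{f_n})<\infty$.

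The only step that needs any thought is controlling the $L^2(X,\mu)$ norm when the hypothesis only controls norms under the varying measures $\mu_n$. The sup-norm argument above handles it directly, and weak convergence of $\mu_n$ to $\mu$ is not even needed.

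Finally, $(\mathcal{F},\mathcal{E}_1)$ is a Hilbert space: $(\mathcal{E},\mathcal{F})$ is a Dirichlet form on $L^2(X,\mu)$, hence closed. So the bounded sequence $\{\widehat{f_n}\}_{n\in\N}$ has a weakly convergent subsequence $\{\widehat{f_{n_k}}\}_{k\in\N}$, whose limit $f$ lies in $\mathcal{F}$. This proves the proposition.
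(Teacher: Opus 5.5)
Your proof is correct and follows the same route as the paper. The energy is controlled by $\mathcal{E}(\widehat{f_{n}})=\mathcal{E}^{n}(f_{n})$, the $L^{2}(X,\mu)$ norm by a uniform sup-norm bound, and weak sequential compactness of bounded sets in $(\mathcal{F},\mathcal{E}_{1})$ finishes the argument. The one small difference is in the sup-norm step: the paper applies Proposition~\ref{kihon} to the trace form $(\mathcal{E}^{n},l(V_{n}))$ with $\mu_{n}$ and then uses the maximum principle $\norm{\widehat{f_{n}}}_{\infty}=\norm{f_{n}}_{\infty}$ for harmonic extensions, whereas you run the argument of Proposition~\ref{kihon} directly for $\widehat{f_{n}}$ on $X$ with an anchor point in $V_{n}$, which avoids the maximum principle altogether.
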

\begin{proof}
For $n \in \N$, we have 
\begin{align} 
\norm{\widehat{f_{n}}}^{2}_{L^2(X,\mu)}
&\leq
\norm{\widehat{f_{n}}}^{2}_{\infty}
=\norm{f_{n}}^{2}_{\infty} \\
&\leq
\left( \left(\mathrm{diam}_{R_{n}}V_{n} \right)^{1/2}+1 \right)^{2}
 \mathcal{E}^{n}_{1}(f_{n}) \\
&\leq
\left( \left(\mathrm{diam}_{R}X \right)^{1/2}+1 \right)^{2} 
\supover{m}\mathcal{E}^{m}_{1}(f_{m})
\end{align}
from Proposition \ref{kihon}. Thus we have
\begin{align}
\mathcal{E}_{1}(\widehat{f_{n}}) 
&=\mathcal{E}(\widehat{f_{n}})+\norm{\widehat{f_{n}}}^{2}_{L^2(X,\mu)} \\
&=\mathcal{E}^{n}(f_{n})+\norm{\widehat{f_{n}}}^{2}_{L^2(X,\mu)} \\
&\leq
\left(\left( \left(\mathrm{diam}_{R}X \right)^{1/2}+1 \right)^{2}+1 \right)
\supover{m}\mathcal{E}^{m}_{1}(f_{m}) < \infty.
\end{align}
Thus, $\{ \widehat{f_{n}} \}_{n \in \N}$ is bounded in $(\mathcal{F}, \mathcal{E}_{1})$. The conclusion follows from 
the Banach--Alaoglu theorem.
\end{proof}
\begin{cor}\label{cor2}
Let $f_{n} \in L^{2}(V_n, \mu_{n})$ for $n \in \N$ and assume that 
$\{ f_{n} \}_{n \in \N}$ weakly K-S converges to $g \in L^2(X, \mu)$ and $\supover{n}\mathcal{E}^{n}_{1}(f_{n}) < \infty$. Then, $g \in \mathcal{F}$ 
and $\{ \widehat{f_{n}} \}_{n \in \N}$ weakly converges 
to $g$ in the Hilbert space $(\mathcal{F}, \mathcal{E}_{1})$.
\end{cor}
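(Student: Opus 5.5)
By Proposition~\ref{prp2}, every subsequence of $\{f_n\}_{n\in\N}$ has a further subsequence $\{f_{n_k}\}_{k\in\N}$ such that $\{\widehat{f_{n_k}}\}_{k\in\N}$ converges weakly in $(\mathcal{F},\mathcal{E}_1)$ to some $f\in\mathcal{F}$. The plan is to show that this limit is always $f=g$. Once that is done, the statement follows by the usual subsequence argument. Indeed, $\{\widehat{f_n}\}_{n\in\N}$ is bounded in $(\mathcal{F},\mathcal{E}_1)$ by the estimate in the proof of Proposition~\ref{prp2}. If it did not converge weakly to $g$, there would be some $\ell$ in the dual and a subsequence staying away from $\ell(g)$. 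Applying Proposition~\ref{prp2} to that subsequence produces a further subsequence converging weakly to $g$, which is a contradiction. In particular $g\in\mathcal{F}$.

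To identify the limit, take a subsequence with $\widehat{f_{n_k}}\to f$ weakly in $(\mathcal{F},\mathcal{E}_1)$. By Corollary~\ref{cor1}, applied along the subsequence with the sets $V_{n_k}$, the functions $\widehat{f_{n_k}}|_{V_{n_k}}=f_{n_k}$ strongly K-S converge to $f\in L^2(X,\mu)$. Corollary~\ref{cor1} is stated for full sequences, but it rests only on Propositions~\ref{prp1} and~\ref{cptop}, and both work verbatim for subsequences, since weak convergence of measures and the K-S space convergence pass to subsequences. By Remark~\ref{rem5}, strong convergence implies weak convergence, so $\{f_{n_k}\}_{k\in\N}$ weakly K-S converges to $f$.

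On the other hand, by Proposition~\ref{wk1}~\ref{enum:4wk}, $\{f_{n_k}\}_{k\in\N}$ also weakly K-S converges to $g$. Proposition~\ref{wk1}~\ref{enum:5wk} then gives $f=g$ as elements of $L^2(X,\mu)$. Since $\mu$ has full support and $f$ is continuous, $f$ is the continuous representative of $g$. Hence $g\in\mathcal{F}$, and the identification holds for every such subsequence.

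The only delicate step is the compactness input, that is, passing from weak convergence in $\mathcal{F}$ to uniform convergence through Proposition~\ref{cptop}. It is needed for strong convergence via Proposition~\ref{prp1} and has to be invoked along subsequences. This is routine but is exactly where uniqueness of the limit comes from.
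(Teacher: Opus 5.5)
Your proposal is correct and follows essentially the same route as the paper's proof. You extract a further subsequence via Proposition~\ref{prp2}, use Corollary~\ref{cor1} to get strong K-S convergence of $f_{n_{k_l}}=\widehat{f_{n_{k_l}}}|_{V_{n_{k_l}}}$ to the weak limit $f$, identify $f=g$ via Proposition~\ref{wk1}~\ref{enum:4wk} and~\ref{enum:5wk}, and conclude by the subsequence principle. Your additional remarks only make explicit steps the paper leaves implicit: applying Corollary~\ref{cor1} along subsequences, invoking Remark~\ref{rem5}, and choosing the continuous representative of $g$.
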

\begin{proof}
We take an arbitrary subsequence $\{ \widehat{f_{n_{k}}} \}_{k \in \N}$
of $\{ \widehat{f_{n}} \}_{n \in \N}$. Then, there exists 
a subsequence $\{ f_{n_{k_{l}}} \}_{l \in \N}$ of $\{ f_{n_{k}} \}_{k \in \N}$
and $f \in \mathcal{F}$ such that $\{ \widehat{f_{n_{k_{l}}}} \}_{l \in \N}$
weakly converges to $f$ in $(\mathcal{F}, \mathcal{E}_{1})$
by Proposition~\ref{prp2} and $\supover{k}\mathcal{E}^{n_{k}}_{1}(f_{n_{k}}) < \infty$. By Corollary~\ref{cor1}, $\{ f_{n_{k_{l}}} \}_{l \in \N}
=\{ \widehat{f_{n_{k_{l}}}\lvert_{V_{n_{k_{l}}}}} \}_{l \in \N}$ 
strongly K-S converges to $f$. On the other hand, 
$\{ f_{n_{k_{l}}} \}_{l \in \N}$ weakly K-S converges to $g$ by 
Proposition~\ref{wk1}~\ref{enum:4wk}. Thus we have $f=g$ by Proposition~\ref{wk1}~\ref{enum:5wk}. In particular, $g \in \mathcal{F}$ holds and 
$\{ \widehat{f_{n_{k_{l}}}} \}_{l \in \N}$ weakly converges to $g$
in $(\mathcal{F}, \mathcal{E}_{1})$. By usual 
subsequence arguments, the conclusion holds.
\end{proof}
\begin{prp}\label{prp3}
Let $f_{n}  \in L^{2}(V_n, \mu_{n})$ for $n \in \N$ and assume that 
$\{ f_{n}\}_{n \in \N}$ weakly K-S converges to $f \in L^{2}(X, \mu)$. Then, for 
any $g \in C(X)$, $\{ f_{n}g \lvert_{V_{n}}\}_{n \in \N}$ weakly K-S converges 
to $fg \in L^{2}(X, \mu)$.
\end{prp}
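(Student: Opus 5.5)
We verify the two conditions of Proposition~\ref{wk1}~\ref{enum:1wk} for the sequence $\{ f_{n}g\lvert_{V_{n}}\}_{n \in \N}$ with limit $fg$. Recall that here $\mathscr{C}_{\infty}=C(X)$ and $\Phi_{n}v=v\lvert_{V_{n}}$.

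The first step is the uniform bound. Since $\{f_{n}\}_{n \in \N}$ weakly K-S converges, the definition gives $\supover{n}\norm{f_{n}}_{L^{2}(V_{n},\mu_{n})}<\infty$. Because $g$ is continuous on the compact space $X$, we have pointwise on $V_{n}$
\begin{equation}
\abs{f_{n}g\lvert_{V_{n}}}\leq \norm{g}_{\infty}\abs{f_{n}},
\end{equation}
and therefore $\supover{n}\norm{f_{n}g\lvert_{V_{n}}}_{L^{2}(V_{n},\mu_{n})}\leq \norm{g}_{\infty}\supover{n}\norm{f_{n}}_{L^{2}(V_{n},\mu_{n})}<\infty$. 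The same estimate shows $fg\in L^{2}(X,\mu)$.

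The second step is convergence of pairings. Fix $v\in C(X)$. Then
\begin{equation}
(f_{n}g\lvert_{V_{n}},\Phi_{n}v)_{L^{2}(V_{n},\mu_{n})}=(f_{n},(gv)\lvert_{V_{n}})_{L^{2}(V_{n},\mu_{n})}=(f_{n},\Phi_{n}(gv))_{L^{2}(V_{n},\mu_{n})}.
\end{equation}
Since $gv\in C(X)=\mathscr{C}_{\infty}$, the weak K-S convergence of $f_{n}$ to $f$, in the form of Proposition~\ref{wk1}~\ref{enum:1wk}, gives that this quantity converges to $(f,gv)_{L^{2}(X,\mu)}=(fg,v)_{L^{2}(X,\mu)}$. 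Alternatively, Remark~\ref{rem3} says $\Phi_{n}(gv)$ strongly converges to $gv$, and the definition of weak convergence then applies directly.

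Applying Proposition~\ref{wk1}~\ref{enum:1wk} once more yields that $\{ f_{n}g\lvert_{V_{n}}\}_{n \in \N}$ weakly K-S converges to $fg$. There is no real obstacle; the only point needing care is that $C(X)$ is closed under multiplication, so that $gv$ is again an admissible test function in $\mathscr{C}_{\infty}$.
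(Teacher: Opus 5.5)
Your proposal is correct and follows essentially the same route as the paper: the uniform $L^{2}$ bound via $\norm{g}_{\infty}$, the identity $(f_{n}g\lvert_{V_{n}},v\lvert_{V_{n}})_{L^{2}(V_{n},\mu_{n})}=(f_{n},(gv)\lvert_{V_{n}})_{L^{2}(V_{n},\mu_{n})}$ with $gv\in C(X)$, and Proposition~\ref{wk1}~\ref{enum:1wk} applied once to $\{f_{n}\}$ and once more to conclude for $\{f_{n}g\lvert_{V_{n}}\}$.
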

\begin{proof}
$\supover{n}\norm{f_{n}}_{L^{2}(V_n, \mu_{n})} < \infty$ holds since 
$\{ f_{n}\}_{n \in \N}$ weakly K-S converges to $f \in L^{2}(X, \mu)$.
Thus, $\supover{n}\norm{f_{n}g \lvert_{V_{n}}}_{L^{2}(V_n, \mu_{n})}
\leq\norm{g}_{\infty} \supover{n}\norm{f_{n}}_{L^{2}(V_n, \mu_{n})}
< \infty$ holds. Next, we take an arbitrary $h \in C(X)$. Then, from Proposition~\ref{wk1}~\ref{enum:1wk}, we have
\begin{align}
\limtoinfty{n}(f_{n}g \lvert_{V_{n}},h\lvert_{V_{n}})_{L^{2}(V_n, \mu_{n})}
&=\limtoinfty{n}(f_{n},(gh) \lvert_{V_{n}})_{L^{2}(V_n, \mu_{n})} \\
&=(f,gh)_{ L^{2}(X, \mu)}
=(fg,h)_{ L^{2}(X, \mu)}
\end{align}
since $\{ f_{n}\}_{n \in \N}$ weakly K-S converges to $f \in L^{2}(X, \mu)$ 
and $gh \in C(X)$. Thus, by Proposition~\ref{wk1}~\ref{enum:1wk} again, 
the conclusion holds.
\end{proof}
\begin{prp}\label{seki}
Let $f_{n}  \in L^{2}(V_n, \mu_{n})$ for $n \in \N$ and assume that 
$\{ f_{n}\}_{n \in \N}$ weakly K-S converges to $f \in L^{2}(X, \mu)$ and 
$\supover{n}\mathcal{E}^{n}_{1}(f_{n}) < \infty$ holds. Then, for any 
$g \in \mathcal{F}$, $\{ \widehat{f_{n}g \lvert_{V_{n}}} \}_{n \in \N}$ weakly 
converges to $fg$ in the Hilbert space $(\mathcal{F}, \mathcal{E}_{1})$.
\end{prp}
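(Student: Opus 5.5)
The plan is to reduce the statement to Corollary~\ref{cor2}, applied to the sequence $\{f_n g\lvert_{V_n}\}_{n\in\N}$ in place of $\{f_n\}_{n\in\N}$. Corollary~\ref{cor2} needs two inputs: weak K-S convergence of this sequence to $fg$, and a uniform bound on $\mathcal{E}^{n}_{1}(f_n g\lvert_{V_n})$. Given both, it yields $fg\in\mathcal{F}$ and the weak convergence of $\{\widehat{f_n g\lvert_{V_n}}\}_{n \in \N}$ to $fg$ in $(\mathcal{F},\mathcal{E}_1)$, which is exactly the claim.

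The first input is immediate. Since $g\in\mathcal{F}\subset C(X)$, Proposition~\ref{prp3} gives that $\{f_n g\lvert_{V_n}\}_{n\in\N}$ weakly K-S converges to $fg\in L^2(X,\mu)$. Weak K-S convergence also implies $\sup_n\norm{f_n}_{L^2(V_n,\mu_n)}<\infty$. Hence $\sup_n\norm{f_n g\lvert_{V_n}}_{L^2(V_n,\mu_n)}\le\norm{g}_\infty\sup_n\norm{f_n}_{L^2(V_n,\mu_n)}<\infty$, which controls the $L^2$ part of $\mathcal{E}^n_1$.

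The main step is the uniform energy bound. I will use the minimality of harmonic extensions in Proposition~\ref{resistancetrace}. Since $\widehat{f_n}\,g\in\mathcal{F}$ (as $\mathcal{F}$ is an algebra) and its restriction to $V_n$ equals $f_n g\lvert_{V_n}$, we get
\begin{align}
\mathcal{E}^n(f_n g\lvert_{V_n})\le\mathcal{E}(\widehat{f_n}\,g).
\end{align}
To bound the right-hand side, I will use a product estimate for the energy:
\begin{align}
\mathcal{E}(uv)^{1/2}\le\norm{u}_\infty\mathcal{E}(v)^{1/2}+\norm{v}_\infty\mathcal{E}(u)^{1/2},\quad u,v\in\mathcal{F}.
\end{align}
This can be obtained from energy measures. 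The Leibniz rule $d\nu_{uv,w}=u\,d\nu_{v,w}+v\,d\nu_{u,w}$ holds for strongly local forms, and combined with the Cauchy--Schwarz inequality for mutual energy measures it gives $\nu_{uv}(X)^{1/2}\le\norm{u}_\infty\nu_v(X)^{1/2}+\norm{v}_\infty\nu_u(X)^{1/2}$. Since $1\in\mathcal{F}$ and $\mathcal{E}(1)=0$, we have $\nu_\cdot(X)=2\mathcal{E}(\cdot)$, so the estimate above follows. This is the step I expect to need the most care, since the Leibniz rule is not stated earlier in the paper. Alternatively, one could cite the standard inequality for resistance forms, e.g.\ \cite[Lemma~6.5]{MR2919892}, which is the same source used for the algebra property.

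Apply the product estimate with $u=\widehat{f_n}$ and $v=g$. We have $\mathcal{E}(\widehat{f_n})=\mathcal{E}^n(f_n)$, which is bounded uniformly in $n$. Exactly as in the proof of Proposition~\ref{prp2}, Proposition~\ref{kihon} gives $\norm{\widehat{f_n}}_\infty=\norm{f_n}_\infty\le\bigl((\mathrm{diam}_R X)^{1/2}+1\bigr)\sup_m\mathcal{E}^m_1(f_m)^{1/2}$. Together with $\norm{g}_\infty<\infty$ and $\mathcal{E}(g)<\infty$, this shows $\sup_n\mathcal{E}^n(f_n g\lvert_{V_n})<\infty$. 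Combined with the $L^2$ bound from the first input, $\sup_n\mathcal{E}^n_1(f_n g\lvert_{V_n})<\infty$, and Corollary~\ref{cor2} concludes the proof.
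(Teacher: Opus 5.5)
Your proposal is correct and follows essentially the same route as the paper. Both arguments reduce to Corollary~\ref{cor2} by combining Proposition~\ref{prp3} with a uniform bound on $\mathcal{E}^{n}_{1}(f_{n}g\lvert_{V_{n}})$, obtained from the product inequality $\mathcal{E}(uv)^{1/2}\le\norm{u}_{\infty}\mathcal{E}(v)^{1/2}+\norm{v}_{\infty}\mathcal{E}(u)^{1/2}$ and the sup-norm control of Proposition~\ref{kihon}. The only difference is that the paper applies this inequality directly to the discrete form $\mathcal{E}^{n}$ and then uses $\mathcal{E}^{n}(g\lvert_{V_{n}})\le\mathcal{E}(g)$, whereas you first pass to $X$ via $\mathcal{E}^{n}(f_{n}g\lvert_{V_{n}})\le\mathcal{E}(\widehat{f_{n}}\,g)$ and apply it to $\mathcal{E}$; both work, and the discrete version is slightly more elementary because it needs no Leibniz rule for energy measures.
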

\begin{proof}
First, note that $\supover{n}\norm{f_{n}}_{\infty} < \infty$ holds by 
$\supover{n}\mathcal{E}^{n}_{1}(f_{n}) < \infty$ and Proposition \ref{kihon}.
Thus, for all $n \in \N$, we have 
\begin{align}
\norm{f_{n}g \lvert_{V_{n}}}_{L^{2}(V_n, \mu_{n})}
\leq \norm{f_{n}g \lvert_{V_{n}}}_{\infty}
\leq \norm{g}_{\infty} \supover{k}\norm{f_{k}}_{\infty} < \infty
\end{align}
and 
\begin{align}
\mathcal{E}^{n}(f_{n}g \lvert_{V_{n}})^{1/2}
&\leq \norm{g \lvert_{V_{n}}}_{\infty} \mathcal{E}^{n}(f_{n})^{1/2}
+\norm{f_{n}}_{\infty} \mathcal{E}^{n}(g \lvert_{V_{n}})^{1/2} \\
&\leq\norm{g}_{\infty} \supover{k}\mathcal{E}^{k}_{1}(f_{k})^{1/2}
+\supover{k}\norm{f_{k}}_{\infty} \mathcal{E}(g)^{1/2} \\
&< \infty.
\end{align}
Thus we obtain $\supover{n}\mathcal{E}^{n}_{1}(f_{n}g \lvert_{V_{n}}) < \infty$.
On the other hand, since $\{ f_{n}\}_{n \in \N}$ weakly K-S converges to 
$f$, $\{ f_{n}g \lvert_{V_{n}}\}_{n \in \N}$ weakly K-S converges to $fg$ 
by Proposition~\ref{prp3}. Thus the conclusion holds by Corollary~\ref{cor2}.
\end{proof}
\subsection{Additional assumptions}
We consider the following Conditions \ref{assmp1}, \ref{assmp2}, and \ref{assmp3}:
\begin{enumerate}[label=\textbf{(\Roman*)}, ref=\Roman*]
    \item\label{assmp1} 
    $\sum_{i,j=1}^{N} \int_{X} b_{i}b_{j} \, d\nu_{h_{i}, h_{j}}< 
        2\left(\mathrm{diam}_{R}X\right)^{-1}.$
    \item\label{assmp2} 
    For any $x \in X$, we have 
    \begin{align}
        \mathcal{E}\left(\sum_{i=1}^{N}b_{i}(x)h_{i}\right) \le \left(\mathrm{diam}_{R}X \right)^{-1}.
    \end{align}
    \item\label{assmp3}
        For any $n \in \N$, $X \setminus V_{n}$ has finitely many connected components
    $\{U_{\lambda} \}_{\lambda \in \Lambda_{n}}$, and for any $x \in X$, 
    $\left\{ \bigcup_{\substack{\lambda \in \Lambda_{n}, \\ 
    x \in \mathrm{Cl}(U_{\lambda})}} \mathrm{Cl}(U_{\lambda}) \right\}_{n \in \N}$ 
    is a fundamental system of neighbourhoods of $x$, where $\mathrm{Cl}(U_{\lambda})$ 
    denotes the closure of $U_{\lambda}$ in $X$.
\end{enumerate}
Condition~\ref{assmp1} is a smallness condition for the drift part. 
Condition~\ref{assmp2} is a pointwise smallness condition which implies 
the Markov property of the approximating forms. 
Condition~\ref{assmp3} means a kind of finite ramification. We refer to Conditions~\ref{assmp1} and \ref{assmp2} as Assumption A, and Conditions~\ref{assmp1} and \ref{assmp3} as Assumption B. From now on, we assume Assumption A or B. 

Since $(\mathcal{E},\mathcal{F})$ is strongly local, we have the following lemma:
\begin{lem}[{\cite[Lemmas 3.2, 3.4, 3.9]{hino2014geodesic}}]
Under Condition \ref{assmp3}, the following hold.
\begin{enumerate}[label=\normalfont(\roman*)]\label{cond3first}
    \item \label{enum:1condition3}
    $\limtoinfty{n} \supover{\lambda \in \Lambda_{n}} \mathrm{diam}_{R} \mathrm{Cl}(U_{\lambda})=0$.
    \item \label{enum:2condition3}
    For any $n \in \N$ and $\lambda \in \Lambda_{n}$, $\partial U_{\lambda} \subset V_{n}$.
    \item \label{enum:3condition3}
    For any $n \in \N$ and $\lambda \in \Lambda_{n}$, there exists $\{a^{\lambda}_{n,p,q} \}_{\substack{p ,q \in \partial U_{\lambda} \\p \neq q}} \subset \nonnegaR$ such that for all distinct $p ,q \in \partial U_{\lambda}$, $a^{\lambda}_{n,p,q}=a^{\lambda}_{n,q,p}$ and the following equation holds:
    \begin{align}
    \nu_{\widehat{f}}(U_{\lambda})=\sum_{\substack{p ,q \in \partial U_{\lambda} \\p \neq q}}
    a^{\lambda}_{n,p,q}(f(p)-f(q))^{2}, \quad f \in l(V_{n}).
    \end{align}
    \end{enumerate}
\end{lem}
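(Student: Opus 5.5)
The three parts of the lemma build on one another, so I will prove them in order. Throughout I use that $X$ is compact and that $V_n$ is finite. I also use one fact from resistance-form theory: $(X,R)$ is locally connected, and connected components of open sets are open. This is what makes the $U_\lambda$ open and lets Lemma~\ref{enerzero} apply.

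For \ref{enum:2condition3}, take $\lambda\in\Lambda_n$ and $x\in\partial U_\lambda$. Suppose $x\notin V_n$. Then $x$ lies in some component $U_{\lambda'}$ of the open set $X\setminus V_n$. Since $U_{\lambda'}$ is open and contains $x\in\mathrm{Cl}(U_\lambda)$, it meets $U_\lambda$. Distinct components are disjoint, so $\lambda'=\lambda$ and $x\in U_\lambda$. This contradicts $x\in\partial U_\lambda$, since $U_\lambda$ is open. Hence $\partial U_\lambda\subset V_n$.

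For \ref{enum:1condition3}, I argue by contradiction with compactness. Suppose there are $\varepsilon>0$, $n_k\uparrow\infty$ and $\lambda_k\in\Lambda_{n_k}$ with $\mathrm{diam}_R\mathrm{Cl}(U_{\lambda_k})\ge\varepsilon$. Pick $x_k\in\mathrm{Cl}(U_{\lambda_k})$ and pass to a subsequence with $x_k\to x$. By \ref{assmp3}, choose $m$ such that $K_m(x):=\bigcup_{x\in\mathrm{Cl}(U_\lambda),\,\lambda\in\Lambda_m}\mathrm{Cl}(U_\lambda)$ is contained in $B_R(x,\varepsilon/3)$.

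The key observation is monotonicity. Since $V_m\subset V_{n_k}$ for $n_k\ge m$, each component $U_{\lambda_k}$ of $X\setminus V_{n_k}$ is connected and misses $V_m$, so it lies in a single component $U_\mu$ of $X\setminus V_m$. I then want to show $\mathrm{Cl}(U_\mu)\subset K_m(x)$ for large $k$. This requires $x\in\mathrm{Cl}(U_\mu)$, which does not follow immediately.

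Here I use finiteness. There are only finitely many $\mu\in\Lambda_m$, so after passing to a further subsequence the same $\mu$ occurs for all $k$. Then $x_k\in\mathrm{Cl}(U_\mu)$ for all $k$, and since the closure is closed, $x\in\mathrm{Cl}(U_\mu)$. Therefore $\mathrm{Cl}(U_{\lambda_k})\subset\mathrm{Cl}(U_\mu)\subset K_m(x)\subset B_R(x,\varepsilon/3)$. This gives $\mathrm{diam}_R\mathrm{Cl}(U_{\lambda_k})\le 2\varepsilon/3<\varepsilon$, a contradiction.

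For \ref{enum:3condition3}, fix $n$ and $\lambda$, and consider the quadratic form $q(f):=\nu_{\widehat f}(U_\lambda)$ on $l(V_n)$. The map $f\mapsto\widehat f$ is linear, and $\nu_{u,v}$ is bilinear and symmetric in $(u,v)$, so $q$ is a nonnegative quadratic form. The argument has three steps.

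First, $q(f)$ depends only on $f|_{\partial U_\lambda}$. This is the main obstacle. Take $f$ with $f=0$ on $\partial U_\lambda$ and show $q(f)=0$. I would use the minimizing property of harmonic extensions. Define $g:=\widehat f\cdot 1_{X\setminus U_\lambda}$, which equals $\widehat f$ off $U_\lambda$ and $0$ on $U_\lambda$. Then $g$ is continuous, because $\widehat f=0$ on $\partial U_\lambda$. Using strong locality, I would check $g\in\mathcal F$ and $\nu_g=\nu_{\widehat f}|_{X\setminus \mathrm{Cl}(U_\lambda)}$. The latter uses that $\partial U_\lambda$ is finite, and that energy measures charge no points for a strongly local form on this space; I expect to need a citation or an argument for this. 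Since $g|_{V_n}=f$, minimality of $\widehat f$ gives
$$\nu_{\widehat f}(X)=2\mathcal E(\widehat f)\le 2\mathcal E(g)=\nu_g(X)\le\nu_{\widehat f}(X\setminus U_\lambda),$$
which forces $\nu_{\widehat f}(U_\lambda)=0$. By polarization, $q(f+f')=q(f')$ whenever $f=0$ on $\partial U_\lambda$, so $q$ factors through $l(\partial U_\lambda)$.

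Second, $q(1)=0$, because $\widehat 1=1$ has zero energy measure.

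Third, a nonnegative quadratic form $q$ on $l(\partial U_\lambda)$ that annihilates constants can be written as $\sum_{p\ne q}a_{p,q}(f(p)-f(q))^2$ with symmetric coefficients. Writing $q(f)=\sum_{p,q}Q_{pq}f(p)f(q)$, the condition $q(1)=0$ together with $Q\ge0$ gives $Q\mathbf 1=0$, so the row sums of $Q$ vanish and $a_{p,q}=-Q_{pq}/2$ works. What remains is to show $a_{p,q}\ge0$, which is a Markov property for $q$. I would derive it from the Markov property \textup{(RF5)} of $\mathcal E$ by localizing to $U_\lambda$: the trace-like form $q$ should inherit $q(0\vee f\wedge1)\le q(f)$ via the same cut-off argument, and this property forces the off-diagonal entries $Q_{pq}\le 0$. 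This is also where I would lean on \cite{hino2014geodesic}.
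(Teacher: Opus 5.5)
Your proofs of \ref{enum:1condition3} and \ref{enum:2condition3} are correct. The paper gives no argument for this lemma and simply imports all three parts from \cite{hino2014geodesic}, so these are genuinely self-contained. One small fix: you do not need, and should not assert as a general fact about resistance forms, that $(X,R)$ is locally connected. The $U_\lambda$ are open simply because $\#\Lambda_n<\infty$: each component is closed in the open set $X\setminus V_n$, so each is the complement there of a finite union of closed sets.

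The gap is in \ref{enum:3condition3}, where the two steps you postpone carry most of the content.

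First, $g=\widehat f\,1_{X\setminus U_\lambda}\in\mathcal F$ is not a consequence of strong locality; it needs an approximation. Put $\phi_\varepsilon(t)=\mathrm{sgn}(t)(\abs{t}-\varepsilon)^{+}$. The compact sets $A_\varepsilon=\{\abs{\widehat f}\ge\varepsilon\}\setminus U_\lambda$ and $B_\varepsilon=\{\abs{\widehat f}\ge\varepsilon\}\cap\mathrm{Cl}(U_\lambda)$ are disjoint because $\widehat f=0$ on $\partial U_\lambda$. Density of $\mathcal F$ in $C(X)$ together with (RF5) then gives $\psi_\varepsilon\in\mathcal F$ with $\psi_\varepsilon=1$ on $A_\varepsilon$ and $\psi_\varepsilon=0$ on $B_\varepsilon$. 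Hence $g_\varepsilon=\psi_\varepsilon\phi_\varepsilon(\widehat f)=\phi_\varepsilon(\widehat f)1_{X\setminus U_\lambda}\in\mathcal F$. Locality of energy measures, the absence of atoms, and the contraction property below give $2\mathcal E(g_\varepsilon)=\nu_{\phi_\varepsilon(\widehat f)}(X\setminus\mathrm{Cl}(U_\lambda))\le\nu_{\widehat f}(X\setminus\mathrm{Cl}(U_\lambda))$. Since $g_\varepsilon\to g$ uniformly, closedness yields $g\in\mathcal F$ with the bound you use.

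Second, and more seriously, the nonnegativity of $a^{\lambda}_{n,p,q}$ is not proved, and (RF5) alone will not give it. (RF5) compares only total masses $\nu_{\cdot}(X)$; splitting $X$ at $\partial U_\lambda$ leaves outside contributions that you have no way to cancel. What works is the following.
\begin{enumerate}
\item[(1)] The variational characterization
\[
q(f)=\min\bigl\{\nu_u(U_\lambda) : u\in\mathcal F,\ u=f\text{ on }\partial U_\lambda\bigr\}.
\]
To prove it, apply the cut-off above to $u-\widehat f$ and compare $\widehat f$ with $w=\widehat f+(u-\widehat f)1_{\mathrm{Cl}(U_\lambda)}$, which satisfies $w|_{V_n}=f$.
\item[(2)] The measure-level contraction $\nu_{0\vee u\wedge1}\le\nu_u$, which comes from strong locality. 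The two measures agree on the open set $\{0<u<1\}$, and $\nu_{0\vee u\wedge1}$ does not charge $\{u\le0\}\cup\{u\ge1\}$ by energy image density \cite[Theorem 4.3.8]{chen2012symmetric}.
\end{enumerate}
Taking $u=0\vee\widehat f\wedge1$ in (1) and (2) gives $q(0\vee f\wedge1)\le q(f)$. Then $f=1_{\{p\}}-t1_{\{r\}}$ with $t\downarrow0$ yields $Q_{pr}\le0$.

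Finally, the step ``$q(f+f')=q(f')$ whenever $f=0$ on $\partial U_\lambda$'' comes from $\abs{\nu_{u,v}(B)}\le\nu_u(B)^{1/2}\nu_v(B)^{1/2}$, not from polarization.
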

\begin{lem}\label{localcoef}
Under Condition \ref{assmp3}, the following holds:
\begin{align}
    c_{n,x,y}=
        \sum_{\lambda \in \Lambda_{n}} 1_{\partial U_{\lambda}}(x)1_{\partial U_{\lambda}}(y)
        a^{\lambda}_{n,x,y}, \quad n \in \N, x ,y \in V_{n} ,x \neq y,
\end{align}
where $\{a^{\lambda}_{n,p,q} \}_{\substack{p ,q \in \partial U_{\lambda} \\p \neq q}} \subset \nonnegaR$ are the same as in Lemma~\ref{cond3first}~\ref{enum:3condition3}.
\end{lem}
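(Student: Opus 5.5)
We want to show that for each $n$ and distinct $x,y\in V_n$, $c_{n,x,y}=\sum_{\lambda\in\Lambda_n}1_{\partial U_\lambda}(x)1_{\partial U_\lambda}(y)a^\lambda_{n,x,y}$. The plan is to express $\mathcal{E}^n(f)$, for $f\in l(V_n)$, as a sum of energy-measure masses over the components $U_\lambda$. The pairwise coefficients can then be read off by polarization and uniqueness of the representation \eqref{coefofen}.

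\textbf{Step 1: energy of the harmonic extension.} Since $(X,R)$ is compact and connected-type issues aside $1\in\mathcal{F}$ with $\mathcal{E}(1)=0$, the remark after Definition~\ref{defitionenergymeasure} gives
\begin{align}
2\mathcal{E}^{n}(f)=2\mathcal{E}(\widehat{f})=\nu_{\widehat f}(X)=\nu_{\widehat f}(V_n)+\sum_{\lambda\in\Lambda_n}\nu_{\widehat f}(U_\lambda).
\end{align}
This uses that $X\setminus V_n$ is the disjoint union of the finitely many $U_\lambda$. These are open, because $V_n$ is finite hence closed, and components of open sets in such spaces are open (finitely many components of an open set are each relatively clopen, hence open).

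\textbf{Step 2: $V_n$ carries no energy.} We need $\nu_{\widehat f}(\{p\})=0$ for each point $p$. I would invoke that energy measures of strongly local regular Dirichlet forms charge no set of zero capacity; in the resistance-form setting points have positive capacity, so this route does not apply directly. A cleaner alternative is to use $\nu_{\widehat f}(\{p\})=\nu_{\widehat f - c}(\{p\})$ with $c=f(p)$ and a truncation argument. Take $g_\varepsilon=(\widehat f-c-\varepsilon)^+-(\widehat f-c+\varepsilon)^-$, which agrees with $\widehat f-c$ minus constants off a neighbourhood of $p$ and is $0$ near $p$. Lemma~\ref{enerzero} then gives $\nu_{g_\varepsilon}(\{p\})=0$. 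Together with $\nu_{\widehat f -c- g_\varepsilon}(X)\to0$ as $\varepsilon\to0$ (since $\mathcal{E}(\widehat f-c-g_\varepsilon)\to 0$ by continuity of $\widehat f$ and the strong locality/truncation), the Cauchy--Schwarz inequality for mutual energy measures yields $\nu_{\widehat f}(\{p\})=0$. Alternatively, this may be part of what \cite{hino2014geodesic} already provides, and I would cite it if so. This is the step I expect to require the most care.

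\textbf{Step 3: comparison and identification.} Combining Steps 1–2 with Lemma~\ref{cond3first}\ref{enum:3condition3} gives
\begin{align}
\sum_{x\neq y}c_{n,x,y}(f(x)-f(y))^2=\sum_{\lambda}\sum_{\substack{p,q\in\partial U_\lambda\\ p\neq q}}a^\lambda_{n,p,q}(f(p)-f(q))^2 .
\end{align}
Rewrite the right side as $\sum_{x\neq y}\bigl(\sum_\lambda 1_{\partial U_\lambda}(x)1_{\partial U_\lambda}(y)a^\lambda_{n,x,y}\bigr)(f(x)-f(y))^2$, using $\partial U_\lambda\subset V_n$ from Lemma~\ref{cond3first}\ref{enum:2condition3}. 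Both sides are quadratic forms of the type $\sum_{x\neq y} d_{x,y}(f(x)-f(y))^2$ with symmetric $d$, so it remains to see that such coefficients are determined by the form. Polarize to obtain the bilinear forms and evaluate at $f=1_{\{x\}}$, $g=1_{\{y\}}$ for $x\neq y$. The value is $-d_{x,y}$ in both cases (up to the common factor), which proves the claimed equality for all distinct $x,y\in V_n$.
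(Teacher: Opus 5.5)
Your Steps 1 and 3 are the paper's proof. You write $2\mathcal{E}^n(f)=\nu_{\widehat f}(X)$, split $X$ into $V_n$ and the open components $U_\lambda$, insert the formula for $\nu_{\widehat f}(U_\lambda)$, and read off the coefficients by uniqueness of the representation \eqref{coefofen}. The paper handles your Step 2 in one line by citing \cite[Theorem~4.3.8]{chen2012symmetric}, the energy image density property: $\nu_u(u^{-1}(N))=0$ for every Lebesgue-null $N\subset\R$, so $\nu_{\widehat f}(\{p\})\le\nu_{\widehat f}(\{\widehat f=f(p)\})=0$.

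Your self-contained replacement has a gap at its decisive point. Put $w_\varepsilon:=\widehat f-c-g_\varepsilon=(-\varepsilon)\vee(\widehat f-c)\wedge\varepsilon$; the unproved claim is $\mathcal{E}(w_\varepsilon)\to0$.
\begin{itemize}
\item Continuity of $\widehat f$ only gives $\norm{w_\varepsilon}_{\infty}\le\varepsilon$. Together with $\mathcal{E}(w_\varepsilon)\le\mathcal{E}(\widehat f)$, this yields weak convergence to $0$ in $(\mathcal{F},\mathcal{E}_1)$, not norm convergence.
\item The claim is stronger than what you are proving. Since $g_\varepsilon$ vanishes on the open set $G_\varepsilon=\{\abs{\widehat f-c}<\varepsilon\}$, your own Cauchy--Schwarz argument gives $\nu_{\widehat f}=\nu_{w_\varepsilon}$ on $G_\varepsilon$. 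Hence $2\mathcal{E}(w_\varepsilon)\ge\nu_{\widehat f}(\{\widehat f=c\})$.
\item So $\mathcal{E}(w_\varepsilon)\to0$ already asserts that the whole level set $\{\widehat f=c\}$, not just $\{p\}$, is $\nu_{\widehat f}$-null. That is exactly the energy-image-density property, and it does not follow from continuity plus truncation.
\item Note also that $\nu_{\widehat f}(\{p\})=\nu_{w_\varepsilon}(\{p\})$ for every $\varepsilon$. The truncation by itself gains nothing; everything rests on the unproved limit.
\end{itemize}

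The simplest repair is to cite energy image density, as the paper does. If you want to stay self-contained, you can prove the limit along a sequence using only weak convergence. Let $u=\widehat f-c$ and write $\mathcal{E}(w_\varepsilon)=\mathcal{E}(w_\varepsilon,u)-\mathcal{E}(w_\varepsilon,g_\varepsilon)$.
\begin{itemize}
\item The first term tends to $0$ because $w_\varepsilon\to0$ weakly in $(\mathcal{F},\mathcal{E}_1)$.
\item For the second term: $w_\varepsilon$ is constant on each of the open sets $\{u>\varepsilon\}$ and $\{u<-\varepsilon\}$, and $g_\varepsilon=0$ on $\{\abs{u}<\varepsilon\}$. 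Lemma~\ref{enerzero} and Cauchy--Schwarz then give $2\mathcal{E}(w_\varepsilon,g_\varepsilon)=\nu_{w_\varepsilon,g_\varepsilon}(\{\abs{u}=\varepsilon\})$.
\item Both $w_\varepsilon$ and $g_\varepsilon$ are normal contractions of $u$. By the standard domination $\nu_{\phi(u)}\le\nu_u$ for normal contractions $\phi$, the absolute value of that quantity is at most $\nu_u(\{\abs{u}=\varepsilon\})$.
\item The sets $\{\abs{u}=\varepsilon\}$ are pairwise disjoint and $\nu_u$ is finite, so this bound vanishes for all but countably many $\varepsilon$. Hence $\mathcal{E}(w_{\varepsilon_k})\to0$ along a suitable sequence $\varepsilon_k\downarrow0$.
\end{itemize}
With either fix, the rest of your argument goes through unchanged.
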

\begin{proof}
Since the energy measure has no point masses (from e.g.\ \cite[Theorem~4.3.8]{chen2012symmetric}), we have $\nu_{\widehat{f}}(V_{n})=0$. Thus, we have 
\begin{align}
    \mathcal{E}^{n}(f)
    &=\mathcal{E}(\widehat{f}) \\
    &= \frac{1}{2} \nu_{\widehat{f}}(X) \\
    &= \frac{1}{2} \sum_{\lambda \in \Lambda_{n}} \nu_{\widehat{f}}(U_{\lambda})
    +\frac{1}{2} \nu_{\widehat{f}}(V_{n}) \\
    &= \frac{1}{2} \sum_{\lambda \in \Lambda_{n}} 
    \sum_{\substack{p ,q \in \partial U_{\lambda} \\p \neq q}}
    a^{\lambda}_{n,p,q}(f(p)-f(q))^{2} +0 \\
    &=\frac{1}{2}\sum_{\substack{x ,y \in V_{n} \\x \neq y}}
    \left( 
        \sum_{\lambda \in \Lambda_{n}} 1_{\partial U_{\lambda}}(x)1_{\partial U_{\lambda}}(y)
        a^{\lambda}_{n,x,y}
    \right)
    (f(x)-f(y))^2, \quad f \in l(V_{n}).
\end{align}
By the uniqueness of the representation in \eqref{coefofen}, we obtain 
the conclusion.
\end{proof}
\begin{lem}\label{importantb}
Let $g \in C(X), h, h^{\prime} \in \mathcal{F}$. Then, we have
\begin{align}
    \limtoinfty{n}\sum_{\substack{x ,y \in V_{n} \\x \neq y}}
    c_{n,x,y}g(x)(h(x)-h(y))(h^{\prime}(x)-h^{\prime}(y))
    = \int_{X} g\, d\nu_{h, h^{\prime}}.
\end{align}
\end{lem}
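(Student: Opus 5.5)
The plan is to reduce, in order, to the diagonal case $h=h^{\prime}$, then to test functions $g\in\mathcal{F}$, and finally to general $g\in C(X)$ by density.

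\emph{Reduction to $h=h^{\prime}$.} For fixed $n$ and $g$, the left-hand sum is a symmetric bilinear form in $(h,h^{\prime})$. The map $(h,h^{\prime})\mapsto\int_X g\,d\nu_{h,h^{\prime}}$ is also symmetric bilinear, since $\nu_{h,h^{\prime}}=\frac12(\nu_{h+h^{\prime}}-\nu_h-\nu_{h^{\prime}})$. By polarization it therefore suffices to prove
\begin{align}
\limtoinfty{n}\sum_{\substack{x ,y \in V_{n} \\x \neq y}} c_{n,x,y}g(x)(h(x)-h(y))^2=\int_X g\,d\nu_h, \quad h\in\mathcal{F}.
\end{align}

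\emph{Case $g\in\mathcal{F}$.} I will use a discrete version of the defining identity of the energy measure. Symmetrize the sum using $c_{n,x,y}=c_{n,y,x}$, and expand $2\mathcal{E}^n(fg,f)-\mathcal{E}^n(g,f^2)$ through \eqref{coefofen} with $f=h\lvert_{V_n}$ and $g$ replaced by $g\lvert_{V_n}$. Both expressions equal $\frac12\sum c_{n,x,y}(g(x)+g(y))(f(x)-f(y))^2$, so
\begin{align}
\sum_{\substack{x ,y \in V_{n} \\x \neq y}} c_{n,x,y}g(x)(h(x)-h(y))^2
=2\mathcal{E}^n\bigl((hg)\lvert_{V_n},h\lvert_{V_n}\bigr)-\mathcal{E}^n\bigl(g\lvert_{V_n},h^2\lvert_{V_n}\bigr).
\end{align}
Here $hg,h^2\in\mathcal{F}$ because $\mathcal{F}$ is an algebra. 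It then suffices to show that $\mathcal{E}^n(u\lvert_{V_n},v\lvert_{V_n})\to\mathcal{E}(u,v)$ for $u,v\in\mathcal{F}$. With that in hand, the right-hand side tends to $2\mathcal{E}(hg,h)-\mathcal{E}(g,h^2)=\int_X g\,d\nu_h$ by Definition~\ref{defitionenergymeasure}. By polarization again, only $\mathcal{E}^n(u\lvert_{V_n})=\mathcal{E}(\widehat{u\lvert_{V_n}})\to\mathcal{E}(u)$ is needed.

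\emph{Key step: $\mathcal{E}^n(u\lvert_{V_n})\to\mathcal{E}(u)$.} I expect this to be the main, though standard, obstacle. Put $u_n\eqdot\widehat{u\lvert_{V_n}}$.
\begin{itemize}
\item Since $V_n\subset V_{n+1}$, the minimization property in Proposition~\ref{resistancetrace} makes $\mathcal{E}(u_n)$ non-decreasing and bounded by $\mathcal{E}(u)$.
\item $\norm{u_n}_\infty=\norm{u\lvert_{V_n}}_\infty\le\norm{u}_\infty$, so $\{u_n\}$ is bounded in $(\mathcal{F},\mathcal{E}_1)$.
\item By Banach--Alaoglu and Proposition~\ref{cptop}, every subsequence has a further subsequence converging weakly in $(\mathcal{F},\mathcal{E}_1)$ and uniformly to some $w\in\mathcal{F}$.
\item Since $w=u$ on the dense set $V_*$ and both are continuous, $w=u$.
\item Weak lower semicontinuity of $\mathcal{E}$ (noting the $L^2$ parts converge because the convergence is uniform) gives $\mathcal{E}(u)\le\liminf\mathcal{E}(u_n)\le\mathcal{E}(u)$.
\end{itemize}
Hence $\mathcal{E}(u_n)\to\mathcal{E}(u)$ along the full sequence, by monotonicity.

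\emph{General $g\in C(X)$.} For $h\in\mathcal{F}$ the two sides satisfy the bounds
\begin{align}
\Bigl|\sum_{\substack{x ,y \in V_{n} \\x \neq y}} c_{n,x,y}g(x)(h(x)-h(y))^2\Bigr|\le 2\norm{g}_\infty\mathcal{E}^n(h\lvert_{V_n})\le2\norm{g}_\infty\mathcal{E}(h),
\qquad
\Bigl|\int_X g\,d\nu_h\Bigr|\le 2\norm{g}_\infty\mathcal{E}(h).
\end{align}
The first uses $c_{n,x,y}\ge0$; the second uses $\nu_h(X)=2\mathcal{E}(h)$, valid since $1\in\mathcal{F}$ and $\mathcal{E}(1)=0$. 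Both sides are therefore linear in $g$ with operator norm at most $2\mathcal{E}(h)$, uniformly in $n$. Since $\mathcal{F}$ is dense in $(C(X),\norm{\cdot}_\infty)$, a standard $3\varepsilon$ argument extends the convergence from $g\in\mathcal{F}$ to all $g\in C(X)$, which completes the proof.
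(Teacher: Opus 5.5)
Your proof is correct and follows essentially the same route as the paper. Both arguments use the discrete identity $\sum c_{n,x,y}g(x)(h(x)-h(y))^2=2\mathcal{E}^n((hg)\lvert_{V_n},h\lvert_{V_n})-\mathcal{E}^n(g\lvert_{V_n},h^2\lvert_{V_n})$ for $g\in\mathcal{F}$, pass to the limit, extend to $g\in C(X)$ by density, and polarize in $(h,h^{\prime})$. The only difference is that you justify two steps the paper leaves implicit: the convergence $\mathcal{E}^n(u\lvert_{V_n})\to\mathcal{E}(u)$, which the paper treats as standard (cf.\ Lemma~\ref{techn1}), and the uniform bound $2\norm{g}_\infty\mathcal{E}(h)$ that makes the density step work.
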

\begin{proof}
A direct computation gives
\begin{align}
\sum_{\substack{x ,y \in V_{n} \\x \neq y}}c_{n,x,y}g(x)(h(x)-h(y))^{2}
=2\mathcal{E}^{n}((hg)\lvert_{V_{n}},h\lvert_{V_{n}})
-\mathcal{E}^{n}(g\lvert_{V_{n}},h^2\lvert_{V_{n}})
\end{align}
for any $n \in \N$ and $g,h \in \mathcal{F}$. Thus, by taking the limit, we have
\begin{align}
\limtoinfty{n}\sum_{\substack{x ,y \in V_{n} \\x \neq y}}c_{n,x,y}g(x)(h(x)-h(y))^{2}
&=2\mathcal{E}(hg,h)-\mathcal{E}(g,h^2)\\
&=\int_{X} g \,d\nu_{h}, \quad g,h \in \mathcal{F}.
\end{align}
Since $\mathcal{F}$ is dense in $(C(X),\norm{\cdot}_{\infty})$, for any 
$g \in C(X)$ and $h \in \mathcal{F}$, we have
\begin{align}\label{approxxxxi}
\limtoinfty{n}\sum_{\substack{x ,y \in V_{n} \\x \neq y}}c_{n,x,y}g(x)(h(x)-h(y))^{2}
=\int_{X} g \,d\nu_{h}.
\end{align}
We easily obtain the conclusion from \eqref{approxxxxi}.
\end{proof}
We fix some $\delta >0$ and $s>0$ which satisfy
\begin{align}
\left(\frac{1}{2}\sum_{i,j=1}^{N} \int_{X} b_{i}b_{j} \, d\nu_{h_{i}, h_{j}} 
    \right)^{1/2}
\left( \mathrm{diam}_{R}X^{1/2}+\delta \right)
 < s < 1
\end{align}
and define $\lambda$ and $t$ by 
\begin{align}
&\lambda \eqdot (4 \delta)^{-1}
\left(\mathrm{diam}_{R}X^{1/2}+\delta \right)^{-1} >0, \\
&t \eqdot \lambda s  > (4 \delta)^{-1}
\left(\frac{1}{2}\sum_{i,j=1}^{N} \int_{X} b_{i}b_{j} \, d\nu_{h_{i}, h_{j}} 
    \right)^{1/2}.
\end{align}
\begin{lem}\label{daigilem}
    For large enough $n \in \N$, we have
\begin{align}
\abs{\mathcal{Q}^{n}(f)}
\leq 
s\mathcal{E}^{n}(f)+t\norm{f}^{2}_{L^{2}(V_n, \mu_{n})}, 
\quad f\in l(V_{n}).
\end{align}
\end{lem}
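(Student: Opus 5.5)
The plan is to apply Cauchy--Schwarz to the edge sum defining $\mathcal{Q}^{n}(f)$, which separates an energy factor from a ``drift'' factor. The drift factor is controlled asymptotically by Lemma~\ref{importantb}. Then $\norm{f}_{\infty}$ is bounded through the discrete version of Proposition~\ref{kihon}, and a Young-type inequality with parameter $\delta$ finishes the argument.

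\emph{Step 1 (Cauchy--Schwarz).} Write $\Delta_{x,y}f=f(x)-f(y)$ and $\beta_{n}(x,y)\eqdot\sum_{i=1}^{N}b_{i}(x)\Delta_{x,y}h_{i}$. Then
\begin{align}
\mathcal{Q}^{n}(f)=\frac12\sum_{x\neq y}c_{n,x,y}\,f(x)\,\Delta_{x,y}f\,\beta_{n}(x,y).
\end{align}
Since $c_{n,x,y}\ge 0$, Cauchy--Schwarz gives
\begin{align}
\abs{\mathcal{Q}^{n}(f)}\le \frac12\Bigl(\sum_{x\neq y}c_{n,x,y}(\Delta_{x,y}f)^{2}\Bigr)^{1/2}\Bigl(\sum_{x\neq y}c_{n,x,y}f(x)^{2}\beta_{n}(x,y)^{2}\Bigr)^{1/2}.
\end{align}
By \eqref{coefofen}, the first factor equals $(2\mathcal{E}^{n}(f))^{1/2}$. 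The second factor is at most $\norm{f}_{\infty}\,I_{n}^{1/2}$, where
\begin{align}
I_{n}\eqdot\sum_{i,j=1}^{N}\sum_{x\neq y}c_{n,x,y}\,b_{i}(x)b_{j}(x)\,\Delta_{x,y}h_{i}\,\Delta_{x,y}h_{j}.
\end{align}

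\emph{Step 2 (limit of the drift factor).} Set $M\eqdot\frac12\sum_{i,j}\int_{X}b_{i}b_{j}\,d\nu_{h_{i},h_{j}}$. Applying Lemma~\ref{importantb} with $g=b_{i}b_{j}\in C(X)$, $h=h_{i}$, $h'=h_{j}$ gives $I_{n}\to 2M$. By the defining strict inequality for $s$, we can choose $M'>M$ with $M'^{1/2}(\mathrm{diam}_{R}X^{1/2}+\delta)\le s$. For all large $n$ we then have $I_{n}\le 2M'$, and hence
\begin{align}
\abs{\mathcal{Q}^{n}(f)}\le M'^{1/2}\norm{f}_{\infty}\,\mathcal{E}^{n}(f)^{1/2}.
\end{align}
This is the only point where ``large enough $n$'' enters. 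It is also the step needing the most care: only the limit of $I_{n}$ is known, so the strict inequality in the choice of $s$ must be used to absorb the error.

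\emph{Step 3 (sup bound and Young's inequality).} Apply Proposition~\ref{kihon} to the resistance form $\mathcal{E}^{n}$ on $V_{n}$ with the full-support probability measure $\mu_{n}$. This gives $\norm{f}_{\infty}\le(\mathrm{diam}_{R_{n}}V_{n})^{1/2}\mathcal{E}^{n}(f)^{1/2}+\norm{f}_{L^{2}(V_{n},\mu_{n})}$. Since $R_{n}=R|_{V_{n}\times V_{n}}$, we may replace $\mathrm{diam}_{R_{n}}V_{n}$ by $\mathrm{diam}_{R}X$. Combining with Step 2 and using $\mathcal{E}^{n}(f)^{1/2}\norm{f}_{L^{2}}\le\delta\mathcal{E}^{n}(f)+(4\delta)^{-1}\norm{f}^{2}_{L^{2}}$ yields
\begin{align}
\abs{\mathcal{Q}^{n}(f)}\le M'^{1/2}(\mathrm{diam}_{R}X^{1/2}+\delta)\,\mathcal{E}^{n}(f)+M'^{1/2}(4\delta)^{-1}\norm{f}^{2}_{L^{2}(V_{n},\mu_{n})}.
\end{align}
The first coefficient is at most $s$ by the choice of $M'$. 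For the second, that same inequality gives
\begin{align}
M'^{1/2}(4\delta)^{-1}\le s(4\delta)^{-1}(\mathrm{diam}_{R}X^{1/2}+\delta)^{-1}=\lambda s=t,
\end{align}
which concludes the proof.
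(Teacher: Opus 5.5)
Your proof is correct and follows essentially the same route as the paper. The paper also uses Cauchy--Schwarz to split $\abs{\mathcal{Q}^{n}(f)}$ into $\norm{f}_{\infty}\mathcal{E}^{n}(f)^{1/2}$ times the drift sum, Lemma~\ref{importantb} for the limit of that sum, and Proposition~\ref{kihon} with Young's inequality. The only difference is cosmetic: you absorb both coefficients with a single $M'$ via $t=\lambda s$, while the paper checks the two strict inequalities for $s$ and $t$ separately for large $n$.
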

\begin{proof}
For any $f\in l(V_{n})$, we have 
\begin{align}
\abs{\mathcal{Q}^{n}(f)}
&=\abs{
\frac{1}{2}\sum_{\substack{x ,y \in V_{n} \\x \neq y}}
c_{n,x,y}f(x)(f(x)-f(y))
\sum_{i=1}^{N}b_{i}(x)(h_{i}(x)-h_{i}(y))}\\
&\leq
\left(\frac{1}{2}\sum_{\substack{x ,y \in V_{n} \\x \neq y}}
c_{n,x,y}
(f(x))^{2}(f(x)-f(y))^{2}
\right)^{1/2} \\
&\quad \cdot \left(\frac{1}{2}
\sum_{\substack{x ,y \in V_{n} \\x \neq y}}
c_{n,x,y}
\left(\sum_{i=1}^{N}b_{i}(x)(h_{i}(x)-h_{i}(y))
\right)^{2} \right)^{1/2} \\
&\le \norm{f}_{\infty}
\mathcal{E}^{n}(f)^{1/2} \\
&\quad \cdot \left(\frac{1}{2}
\sum_{i,j=1}^{N}\sum_{\substack{x ,y \in V_{n} \\x \neq y}}
c_{n,x,y} b_{i}(x)b_{j}(x)(h_{i}(x)-h_{i}(y))(h_{j}(x)-h_{j}(y))
\right)^{1/2}.
\end{align}
From Proposition~\ref{kihon},
\begin{align}
\norm{f}_{\infty}
\mathcal{E}^{n}(f)^{1/2}
&\le \left(
\mathrm{diam}_{R_{n}}V_{n}^{1/2} \mathcal{E}^{n}(f)^{1/2}
+\norm{f}_{L^{2}(V_{n},\mu_{n})}
\right)
\mathcal{E}^{n}(f)^{1/2} \\
&\le 
\left(\mathrm{diam}_{R}X^{1/2}+\delta\right) 
\mathcal{E}^{n}(f)
+(4\delta)^{-1}\norm{f}_{L^{2}(V_{n},\mu_{n})}^{2}.
\end{align}
On the other hand, by Lemma \ref{importantb}, we have
\begin{align}
&\left(\frac{1}{2}
\sum_{i,j=1}^{N}\sum_{\substack{x ,y \in V_{n} \\x \neq y}}
c_{n,x,y} b_{i}(x)b_{j}(x)(h_{i}(x)-h_{i}(y))(h_{j}(x)-h_{j}(y))
\right)^{1/2} \\
&\quad \cdot \left( \mathrm{diam}_{R}X^{1/2}+\delta \right)< s, \\
&\left(\frac{1}{2}
\sum_{i,j=1}^{N}\sum_{\substack{x ,y \in V_{n} \\x \neq y}}
c_{n,x,y} b_{i}(x)b_{j}(x)(h_{i}(x)-h_{i}(y))(h_{j}(x)-h_{j}(y))
\right)^{1/2} \\
& \quad \cdot (4\delta)^{-1}< t
\end{align}
for large enough $n \in \N$. This proves the assertion.
\end{proof}
In the same way, we can prove the following:
\begin{lem}
    We have
\begin{equation}
\abs{\mathcal{Q}(f)}
\leq 
s\mathcal{E}(f)+t\norm{f}^{2}_{L^{2}(X,\mu)}, 
\quad f \in \mathcal{F}.
\end{equation}
\end{lem}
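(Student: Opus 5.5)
We follow the proof of Lemma~\ref{daigilem}, with sums over edges of $V_{n}$ replaced by integrals against mutual energy measures. Write $\nu_{f,H} \eqdot \sum_{i=1}^{N} b_{i}\,\nu_{f,h_{i}}$ and set $\Gamma \eqdot \sum_{i,j=1}^{N} b_{i}b_{j}\,\nu_{h_{i},h_{j}}$. Then $\Gamma$ is a nonnegative measure: for every Borel set $B$, the matrix $(\nu_{h_{i},h_{j}}(B))_{i,j}$ is positive semidefinite. For $f \in \mathcal{F}$ we have
\begin{align}
\mathcal{Q}(f) = \frac{1}{2}\int_{X} f \, d\nu_{f,H}.
\end{align}

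\textbf{Step 1: Cauchy--Schwarz for energy measures.} For $\varphi \in C(X)$ and $f,h \in \mathcal{F}$ we use
\begin{align}
\abs{\int_{X}\varphi \, d\nu_{f,h}} \le \left(\int_{X}\abs{\varphi}\,d\nu_{f}\right)^{1/2}\left(\int_{X}\abs{\varphi}\,d\nu_{h}\right)^{1/2}.
\end{align}
This follows from the setwise bound $\abs{\nu_{f,h}(B)} \le \nu_{f}(B)^{1/2}\nu_{h}(B)^{1/2}$ in Definition~\ref{defitionenergymeasure}: approximate $\varphi$ by simple functions and apply the discrete Cauchy--Schwarz inequality. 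We need the version where the second function is the combination $\sum_{i} b_{i}h_{i}$ with continuous, not constant, coefficients. There are two ways to get it. One is to apply bilinearity on each small set $B_{k}$ of a fine partition, freezing $b_{i}$ at a point, and pass to the limit using uniform continuity of the $b_{i}$. The cleaner one is to use Lemma~\ref{importantb}: the discrete inequality from the proof of Lemma~\ref{daigilem} holds for $\varphi = f b_{i}$, and Lemma~\ref{importantb} lets us pass to the limit. Either way we obtain
\begin{align}
\abs{\mathcal{Q}(f)} \le \frac{1}{2}\left(\int_{X} f^{2}\,d\nu_{f}\right)^{1/2}\Gamma(X)^{1/2} \le \norm{f}_{\infty}\,\mathcal{E}(f)^{1/2}\left(\frac{1}{2}\Gamma(X)\right)^{1/2},
\end{align}
using $\nu_{f}(X) = 2\mathcal{E}(f)$.

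\textbf{Step 2: absorb the sup norm.} By Proposition~\ref{kihon} and Young's inequality $ab \le \delta a^{2} + (4\delta)^{-1}b^{2}$,
\begin{align}
\norm{f}_{\infty}\,\mathcal{E}(f)^{1/2} \le \left(\mathrm{diam}_{R}X^{1/2}+\delta\right)\mathcal{E}(f) + (4\delta)^{-1}\norm{f}_{L^{2}(X,\mu)}^{2}.
\end{align}
Combining this with Step 1, and using the choice of $s$ and the strict lower bound on $t$, gives exactly $s\,\mathcal{E}(f) + t\,\norm{f}_{L^{2}(X,\mu)}^{2}$. Here no "large enough $n$" margin is needed, because $\Gamma(X)$ appears exactly rather than through an approximation.

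The main obstacle is the rigorous Cauchy--Schwarz step in Step 1 with a continuous weight $f$ and continuous coefficients $b_{i}$. I would first try the limit route via Lemma~\ref{importantb}. For fixed $f \in \mathcal{F}$, take the discrete inequality for $f\lvert_{V_{n}}$. By Lemma~\ref{importantb}, the left side converges to $\mathcal{Q}(f)$, since $\mathcal{Q}^{n}(f\lvert_{V_{n}})$ is a sum of terms of the form in that lemma with $g = b_{i}f$. Likewise the factor $\sum c_{n,x,y} f(x)^{2}(f(x)-f(y))^{2}$ converges to $\int f^{2}\,d\nu_{f}$, and the $h$-factor converges to $\Gamma(X)$. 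This yields Step 1 directly.
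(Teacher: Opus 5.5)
Your proof is correct. Its skeleton is exactly what the paper means by ``in the same way'': a Cauchy--Schwarz bound, then Proposition~\ref{kihon} with Young's inequality, then the strict inequalities defining $s$ and $t$. The one real difference is how you justify the central inequality
\begin{align}
\abs{\mathcal{Q}(f)} \le \left(\frac{1}{2}\int_{X} f^{2}\,d\nu_{f}\right)^{1/2}\left(\frac{1}{2}\sum_{i,j=1}^{N}\int_{X} b_{i}b_{j}\,d\nu_{h_{i},h_{j}}\right)^{1/2}.
\end{align}

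The paper's ``in the same way'' points to the continuum analogue of the edge-wise Cauchy--Schwarz step, namely a weighted Cauchy--Schwarz inequality for mutual energy measures. With non-constant continuous $b_{i}$, this needs a partition argument or a positive-semidefinite-density argument, which the paper leaves implicit.

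You instead write the discrete inequality for $f\lvert_{V_{n}}$ and pass to the limit factor by factor via Lemma~\ref{importantb}, taking $g=b_{i}f$, $g=f^{2}$ and $g=b_{i}b_{j}$.

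\begin{itemize}
\item \textbf{Your route:} it uses only results already established in the paper. It also gives $\sum_{i,j}\int_{X} b_{i}b_{j}\,d\nu_{h_{i},h_{j}}\ge 0$ for free, as a limit of nonnegative sums.
\item \textbf{The paper's route:} it is intrinsic to $(\mathcal{E},\mathcal{F})$ and does not depend on the approximating sets $V_{n}$ at all.
\end{itemize}

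Your remark that no ``large $n$'' margin is needed is also right. The strict inequalities in the choice of $s$ and $t$ apply to the limiting quantity directly.
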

\begin{prp}\label{prp5}
For large enough $n \in \N$, we have
\begin{equation}
(1-s)\mathcal{E}^{n}_{\lambda}(f)
\leq
\mathcal{A}^{n}_{\lambda}(f)
\leq
(1+s)\mathcal{E}^{n}_{\lambda}(f), \quad f\in l(V_{n}).
\end{equation}
\end{prp}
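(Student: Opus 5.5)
The plan is to derive Proposition~\ref{prp5} directly from Lemma~\ref{daigilem} together with the relation $t=\lambda s$. Fix $n$ large enough that Lemma~\ref{daigilem} applies, and take $f\in l(V_n)$. By definition,
\begin{align}
\mathcal{A}^{n}_{\lambda}(f)=\mathcal{E}^{n}(f)+\mathcal{Q}^{n}(f)+\lambda\norm{f}^{2}_{L^{2}(V_n,\mu_n)}
=\mathcal{E}^{n}_{\lambda}(f)+\mathcal{Q}^{n}(f).
\end{align}
So the whole task reduces to bounding $\abs{\mathcal{Q}^{n}(f)}$ by $s\,\mathcal{E}^{n}_{\lambda}(f)$.

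This bound is immediate from Lemma~\ref{daigilem}. Substituting $t=\lambda s$ gives
\begin{align}
\abs{\mathcal{Q}^{n}(f)}\le s\mathcal{E}^{n}(f)+\lambda s\norm{f}^{2}_{L^{2}(V_n,\mu_n)}=s\,\mathcal{E}^{n}_{\lambda}(f).
\end{align}
Hence
\begin{align}
\mathcal{E}^{n}_{\lambda}(f)-s\,\mathcal{E}^{n}_{\lambda}(f)\le \mathcal{A}^{n}_{\lambda}(f)\le \mathcal{E}^{n}_{\lambda}(f)+s\,\mathcal{E}^{n}_{\lambda}(f),
\end{align}
which is exactly the claimed two-sided inequality.

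There is no genuine obstacle here. All the analytic work, namely the Cauchy--Schwarz estimate, Proposition~\ref{kihon}, and the convergence from Lemma~\ref{importantb}, is already contained in Lemma~\ref{daigilem}. The threshold on $n$ in the proposition is inherited from that lemma. The only points to check are that $\mathcal{E}^{n}_{\lambda}(f)\ge0$, which holds because $\lambda>0$ and $\mathcal{E}^n\ge0$, and that $t=\lambda s$ matches the definition of $t$ made just before Lemma~\ref{daigilem}.
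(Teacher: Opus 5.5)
Your proof is correct and follows the paper's argument. The paper also applies Lemma~\ref{daigilem} to get $(1-s)\mathcal{E}^{n}(f)-t\norm{f}^{2}_{L^{2}(V_n,\mu_n)}\le\mathcal{A}^{n}(f)\le(1+s)\mathcal{E}^{n}(f)+t\norm{f}^{2}_{L^{2}(V_n,\mu_n)}$ and then adds $\lambda\norm{f}^{2}_{L^{2}(V_n,\mu_n)}$ using $t=\lambda s$; your packaging as $\abs{\mathcal{Q}^{n}(f)}\le s\,\mathcal{E}^{n}_{\lambda}(f)$ is just a more compact form of the same computation.
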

\begin{proof}
Since $\mathcal{A}^{n}=\mathcal{E}^{n}+\mathcal{Q}^{n}$ holds, 
for large enough $n \in \N$ and any $f\in l(V_{n})$, 
\begin{equation}
(1-s)\mathcal{E}^{n}(f)
-t\norm{f}^{2}_{L^{2}(V_n, \mu_{n})}
\leq
\mathcal{A}^{n}(f)
\leq
(1+s)\mathcal{E}^{n}(f)
+t\norm{f}^{2}_{L^{2}(V_n, \mu_{n})}
\end{equation}
holds by Lemma~\ref{daigilem}. Thus, we have
\begin{align}
(1-s)\mathcal{E}^{n}_{\lambda}(f)
&=(1-s)\mathcal{E}^{n}(f)
+(1-s)\lambda \norm{f}^{2}_{L^{2}(V_n, \mu_{n})} \\
&=\left\{
(1-s)\mathcal{E}^{n}(f)
-t\norm{f}^{2}_{L^{2}(V_n, \mu_{n})}
\right\}
+\lambda \norm{f}^{2}_{L^{2}(V_n, \mu_{n})} \\
&\leq \mathcal{A}^{n}(f)
+\lambda \norm{f}^{2}_{L^{2}(V_n, \mu_{n})}
(=\mathcal{A}^{n}_{\lambda}(f)) \\
&\leq
\left\{
(1+s)\mathcal{E}^{n}(f)
+t\norm{f}^{2}_{L^{2}(V_n, \mu_{n})}
\right\}
+\lambda \norm{f}^{2}_{L^{2}(V_n, \mu_{n})} \\
&=(1+s)\mathcal{E}^{n}(f)
+(1+s)\lambda \norm{f}^{2}_{L^{2}(V_n, \mu_{n})} \\
&=(1+s)\mathcal{E}^{n}_{\lambda}(f).
\qedhere \end{align}
\end{proof}
Similarly, the following holds:
\begin{prp}\label{prp6}
    We have
\begin{equation}
(1-s)\mathcal{E}_{\lambda}(f)
\leq
\mathcal{A}_{\lambda}(f)
\leq
(1+s)\mathcal{E}_{\lambda}(f), \quad f \in \mathcal{F}.
\end{equation}
\end{prp}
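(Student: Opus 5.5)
The plan is to copy the proof of Proposition~\ref{prp5} on $X$, with the lemma stated just before Proposition~\ref{prp5} in place of Lemma~\ref{daigilem}. That lemma gives
\begin{equation}
\abs{\mathcal{Q}(f)} \leq s\mathcal{E}(f)+t\norm{f}^{2}_{L^{2}(X,\mu)}, \quad f\in\mathcal{F}.
\end{equation}
Since $\mathcal{A}=\mathcal{E}+\mathcal{Q}$, it follows that
\begin{equation}
(1-s)\mathcal{E}(f)-t\norm{f}^{2}_{L^{2}(X,\mu)} \leq \mathcal{A}(f) \leq (1+s)\mathcal{E}(f)+t\norm{f}^{2}_{L^{2}(X,\mu)}.
\end{equation}
We then add $\lambda\norm{f}^{2}_{L^{2}(X,\mu)}$ to all three parts and use $t=\lambda s$. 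This turns $\lambda-t$ into $(1-s)\lambda$ on the left and $\lambda+t$ into $(1+s)\lambda$ on the right, which is exactly the claimed two-sided bound.

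The substance lies in the continuum lemma, which the paper dismisses with ``in the same way''. I would verify it directly. Write $\mathcal{Q}(f)=\frac12\int_X f\,d\nu_{f,H}$, where $H$ stands for $\sum_i b_i h_i$ with the $b_i$ read as functions of the integration variable. Concretely,
\begin{equation}
\mathcal{Q}(f)=\frac12\sum_i\int_X b_i f\,d\nu_{f,h_i}.
\end{equation}
Apply the Cauchy--Schwarz inequality for mutual energy measures in its integrated form,
\begin{equation}
\abs{\int \phi\psi\,d\nu_{u,v}}\le \left(\int\phi^2d\nu_u\right)^{1/2}\left(\int\psi^2d\nu_v\right)^{1/2},
\end{equation}
extended to finite sums. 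This gives
\begin{equation}
\abs{\mathcal{Q}(f)}\le \frac12\left(\int f^2\,d\nu_f\right)^{1/2}\left(\sum_{i,j}\int b_ib_j\,d\nu_{h_i,h_j}\right)^{1/2}.
\end{equation}
Then use $\int f^2d\nu_f\le\norm{f}_\infty^2\nu_f(X)=2\norm{f}_\infty^2\mathcal{E}(f)$, where the remark after Definition~\ref{defitionenergymeasure} applies because $1\in\mathcal{F}$ and $\mathcal{E}(1)=0$. The result is
\begin{equation}
\abs{\mathcal{Q}(f)}\le \norm{f}_\infty\mathcal{E}(f)^{1/2}\left(\frac12\sum_{i,j}\int b_ib_j\,d\nu_{h_i,h_j}\right)^{1/2}.
\end{equation}

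Proposition~\ref{kihon} together with Young's inequality gives
\begin{equation}
\norm{f}_\infty\mathcal{E}(f)^{1/2}\le (\mathrm{diam}_RX^{1/2}+\delta)\mathcal{E}(f)+(4\delta)^{-1}\norm{f}_{L^2}^2.
\end{equation}
By the choice of $s$ and $t$, the coefficient in front of $\mathcal{E}(f)$ is then below $s$ and the coefficient in front of $\norm{f}^2_{L^2}$ is below $t$. Unlike the discrete case, no limiting argument through Lemma~\ref{importantb} is needed, since the quantity appears exactly.

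The main obstacle is justifying the multi-term Cauchy--Schwarz step, i.e.\ that the matrix of signed measures $(\nu_{h_i,h_j})$ is positive semidefinite when integrated against $b_ib_j$. I would prove it first for simple functions $b_i$: on each Borel piece $B$, $\sum_{i,j}c_ic_j\nu_{h_i,h_j}(B)=\nu_{\sum c_ih_i}(B)\ge0$ by bilinearity. Uniform approximation then passes it to continuous $b_i$. The same bilinearity handles the cross term with $f$.
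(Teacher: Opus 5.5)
Your proposal is correct and is essentially the paper's argument: the paper proves this exactly as Proposition~\ref{prp5}, adding $\lambda\norm{f}^{2}_{L^{2}(X,\mu)}$ to the two-sided bound that the continuum analogue of Lemma~\ref{daigilem} provides and using $t=\lambda s$. Your direct verification of that continuum lemma is the faithful continuum version of the discrete proof, which the paper only alludes to with ``in the same way''; it uses the integrated multi-term Cauchy--Schwarz inequality for mutual energy measures (your positive-semidefiniteness argument via simple functions and bilinearity is sound), together with $\nu_f(X)=2\mathcal{E}(f)$, Proposition~\ref{kihon} and Young's inequality.
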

\begin{rem}
The above inequality still holds if $\lambda$ is replaced with $\lambda +1$.
\end{rem}
\begin{prp}\label{semidir1}
For large enough $n \in \N$, $(\mathcal{A}^{n},l(V_{n}))$ is a 
semi-Dirichlet form on $L^{2}(V_{n},\mu_{n})$ with the constant $\lambda$ in $\mathrm{(SD1)}$.
\end{prp}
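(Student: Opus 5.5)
The plan is to verify (SD1)--(SD4) directly for $(\mathcal{A}^{n},l(V_{n}))$ on $L^{2}(V_{n},\mu_{n})$, with $n$ large enough for Proposition~\ref{prp5} to hold. Since $l(V_{n})$ is finite dimensional and $\mu_{n}(\{x\})>0$ for every $x\in V_{n}$, all norms on $l(V_{n})$ are equivalent, so completeness is automatic. The only genuinely new ingredient is the Markov property (SD4).

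\emph{(SD1)--(SD3).} Proposition~\ref{prp5} gives $\mathcal{A}^{n}_{\lambda}(f)\ge(1-s)\mathcal{E}^{n}_{\lambda}(f)\ge0$, which is (SD1) with the constant $\lambda$. Since $\mathcal{E}^{n}_{\lambda+1}(\cdot)^{1/2}$ is a norm, the inequality with $\lambda$ replaced by $\lambda+1$ shows that $\mathcal{A}^{n}_{\lambda+1}(\cdot)^{1/2}$ is an equivalent norm, hence complete on the finite-dimensional space $l(V_{n})$; this is (SD2).

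For (SD3), split $\mathcal{A}^{n}(u,v)=\mathcal{E}^{n}(u,v)+\mathcal{Q}^{n}(u,v)$. The first term is bounded by $\mathcal{E}^{n}(u)^{1/2}\mathcal{E}^{n}(v)^{1/2}$. For the second, repeat the Cauchy--Schwarz estimate from the proof of Lemma~\ref{daigilem} with $g=v$ in place of $f$ in the weight. This gives $|\mathcal{Q}^{n}(u,v)|\le C\norm{v}_{\infty}\mathcal{E}^{n}(u)^{1/2}$, where $C$ is the bounded square-root factor appearing there (bounded in $n$ by Lemma~\ref{importantb}). Proposition~\ref{kihon} then bounds $\norm{v}_{\infty}$ by a constant times $\mathcal{E}^{n}_{\lambda}(v)^{1/2}$. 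Finally, Proposition~\ref{prp5} converts $\mathcal{E}^{n}_{\lambda}$ into $(1-s)^{-1}\mathcal{A}^{n}_{\lambda}$, which yields some $K\ge1$.

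\emph{(SD4).} This is the main step. Clearly $f\wedge a\in l(V_{n})$. Set $u=f\wedge a$ and $w=f-u\ge0$. In $\mathcal{E}^{n}(u,w)$ only pairs with one point above $a$ and one below contribute. For $f(x)>a\ge f(y)$ the ordered pairs $(x,y)$ and $(y,x)$ together contribute $c_{n,x,y}(f(x)-a)(a-f(y))\ge0$. In $\mathcal{Q}^{n}(u,w)$ the factor $w(x)$ forces $f(x)>a$, and $u(x)-u(y)\neq0$ then forces $f(y)<a$. So the only terms are
\[
\tfrac12 c_{n,x,y}(f(x)-a)(a-f(y))B_{n}(x,y), \qquad B_{n}(x,y)\eqdot\sum_{i=1}^{N}b_{i}(x)(h_{i}(x)-h_{i}(y)).
\]
Hence $\mathcal{A}^{n}(u,w)=\sum c_{n,x,y}(f(x)-a)(a-f(y))\bigl(1+\tfrac12B_{n}(x,y)\bigr)$, the sum running over pairs with $f(x)>a>f(y)$. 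It therefore suffices to show $|B_{n}(x,y)|\le2$ whenever $c_{n,x,y}>0$.

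Under Assumption A, fix $x$ and put $g\eqdot\sum_{i=1}^{N}b_{i}(x)h_{i}\in\mathcal{F}$. By (RF4) and Condition~\ref{assmp2},
\[
|B_{n}(x,y)|^{2}=|g(x)-g(y)|^{2}\le R(x,y)\,\mathcal{E}(g)\le\mathrm{diam}_{R}X\cdot(\mathrm{diam}_{R}X)^{-1}=1,
\]
and this holds for every $n$. Under Assumption B, Lemma~\ref{localcoef} shows that $c_{n,x,y}>0$ forces $x,y\in\partial U_{\lambda}\subset\mathrm{Cl}(U_{\lambda})$ for some $\lambda\in\Lambda_{n}$. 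Then
\[
|B_{n}(x,y)|\le\sum_{i=1}^{N}\norm{b_{i}}_{\infty}\,\mathcal{E}(h_{i})^{1/2}\,\sup_{\lambda\in\Lambda_{n}}\bigl(\mathrm{diam}_{R}\mathrm{Cl}(U_{\lambda})\bigr)^{1/2},
\]
which tends to $0$ by Lemma~\ref{cond3first}~\ref{enum:1condition3}, so $|B_{n}(x,y)|\le2$ for large $n$.

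The main obstacle is this case analysis in (SD4): the sign bookkeeping must show that the symmetric part's contribution dominates the drift on exactly the crossing pairs. Everything else is routine.
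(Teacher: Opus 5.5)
Your proposal is correct and follows essentially the same route as the paper. (SD1)--(SD3) come from Proposition~\ref{prp5}, finite dimensionality, and the Cauchy--Schwarz bound $|\mathcal{Q}^{n}(u,v)|\le C\norm{v}_{\infty}\mathcal{E}^{n}(u)^{1/2}$ combined with Proposition~\ref{kihon}, and (SD4) comes from a sign analysis on crossing pairs, using (RF4) with Condition~\ref{assmp2} under Assumption A and Lemmas~\ref{localcoef} and \ref{cond3first} under Assumption B.

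The one small difference is in (SD4): you combine the ordered pairs $(x,y)$ and $(y,x)$, which gives the factor $1+\tfrac12 B_{n}(x,y)$ (the $1+\eta(x,y)$ of Remark~\ref{markovchainrepresentation}) and so needs only $B_{n}\ge -2$. The paper instead checks each ordered term $c_{n,x,y}r(x,y)$ separately and needs $1+B_{n}(x,y)\ge 0$; both conditions hold under either assumption.
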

\begin{proof}
$\mathrm{(SD1)}$ follows from Proposition~\ref{prp5}.
$\mathrm{(SD2)}$ follows from the fact that $L^{2}(V_{n},\mu_{n})$ is finite-dimensional.

We prove $\mathrm{(SD3)}$. First, note that for any $x,y \geq 0$ and $p >q \geq 0$, 
the inequality $qx+y \le p\left\{x^{2}+(p^2-q^2)^{-1}y^{2} \right\} ^{1/2}$ 
holds. By putting
$p=\left(\mathrm{diam}_{R}X\right)^{1/2}+2\delta
,q=\left(\mathrm{diam}_{R}X\right)^{1/2},
x=\mathcal{E}^{n}(g)^{1/2},
y=\norm{g}_{L^{2}(V_{n},\mu_{n})}$ in this inequality and using Proposition~\ref{kihon},
\begin{align}
\norm{g}_{\infty} 
&\leq
\left( \mathrm{diam}_{R_{n}}V_{n} \right)^{1/2} \mathcal{E}^{n}(g)^{1/2}
+\norm{g}_{L^{2}(V_{n},\mu_{n})} \\
&\leq
\left( \mathrm{diam}_{R}X\right)^{1/2} \mathcal{E}^{n}(g)^{1/2}
+\norm{g}_{L^{2}(V_{n},\mu_{n})} \\
&\leq
\left\{
\mathcal{E}^{n}(g)
+(4\delta)^{-1}
\left(
\left(\mathrm{diam}_{R}X\right)^{1/2}+\delta
\right)^{-1}
\norm{g}^{2}_{L^{2}(V_{n},\mu_{n})}
\right\}^{1/2}\\
&\quad \cdot \left( \left(\mathrm{diam}_{R}X\right)^{1/2}+2\delta \right)\\
&=\mathcal{E}^{n}_{\lambda}(g)^{1/2}
\left( \left(\mathrm{diam}_{R}X\right)^{1/2}+2\delta \right), 
\quad g\in l(V_{n}).
\end{align}
On the other hand, for $1 \leq i  \leq N$, 
we have, in the same way as Lemma~\ref{daigilem}, 
\begin{align}
&\abs{\mathcal{Q}^{n}_{i}(f,g)}
\leq
\norm{b_{i}}_{\infty}
\mathcal{E}(h_{i})^{1/2}
\norm{g}_{\infty}
\mathcal{E}^{n}(f)^{1/2}, 
\quad f,g\in l(V_{n}).
\end{align}
Thus, we get, for any $f,g\in l(V_{n})$,
\begin{align}
\abs{\mathcal{Q}^{n}(f,g)}
&\leq
\sum_{i=1}^{N}\abs{\mathcal{Q}^{n}_{i}(f,g)} \\
&\leq
\norm{g}_{\infty}
\mathcal{E}^{n}(f)^{1/2} 
\sum_{i=1}^{N}
\norm{b_{i}}_{\infty}
\mathcal{E}(h_{i})^{1/2} \\
&\leq
\left( \left(\mathrm{diam}_{R}X \right)^{1/2}+2\delta \right)
\mathcal{E}^{n}_{\lambda}(f)^{1/2}
\mathcal{E}^{n}_{\lambda}(g)^{1/2}
\sum_{i=1}^{N}
\norm{b_{i}}_{\infty}
\mathcal{E}(h_{i})^{1/2}.
\end{align}
Thus, together with Proposition~\ref{prp5}, we have 
\begin{align}
\abs{\mathcal{A}^{n}(f,g)}
&\leq
\abs{\mathcal{E}^{n}(f,g)}
+\abs{\mathcal{Q}^{n}(f,g)} \\
&\leq
\left\{
1+\left( \left(\mathrm{diam}_{R}X\right)^{1/2}+2\delta \right)
\sum_{i=1}^{N}
\norm{b_{i}}_{\infty}
\mathcal{E}(h_{i})^{1/2}
\right\} \\
&\quad \cdot
\mathcal{E}^{n}_{\lambda}(f)^{1/2}
\mathcal{E}^{n}_{\lambda}(g)^{1/2}  \\
&\leq
(1-s)^{-1}
\left\{
1+\left( \left(\mathrm{diam}_{R}X\right)^{1/2}+2\delta \right)
\sum_{i=1}^{N}
\norm{b_{i}}_{\infty}
\mathcal{E}(h_{i})^{1/2}
\right\} \\
&\quad \cdot
\mathcal{A}^{n}_{\lambda}(f)^{1/2}
\mathcal{A}^{n}_{\lambda}(g)^{1/2}, 
\quad f,g\in l(V_{n}).
\end{align}
Lastly, we prove $\mathrm{(SD4)}$.
Let $f\in l(V_{n})$ and $a \geq 0$. For distinct $x,y \in V_{n}$, we define $r(x,y)$ by 
\begin{align}
r(x,y) 
&\eqdot
\{(f \wedge a)(x)-(f \wedge a)(y)\}
\left\{
(f-f \wedge a)(x)-(f-f \wedge a)(y)
\vphantom{\sum_{i=1}^{N}
b_{i}(x)(f-f \wedge a)(x)
(h_{i}(x)-h_{i}(y))}\right. \\
&\quad{} \left.  
+\sum_{i=1}^{N} b_{i}(x)(h_{i}(x)-h_{i}(y))(f-f \wedge a)(x)\right\}.
\end{align}
Then, a direct computation gives
\begin{equation}\label{markovdiscrete}
\mathcal{A}^{n}(f \wedge a,f-f \wedge a)
=\frac{1}{2}
\sum_{\substack{x ,y \in V_{n}x \neq y}}
c_{n,x,y}r(x,y).
\end{equation}
Now, for large enough $n$, we prove each term of the right-hand side of \eqref{markovdiscrete} is non-negative. To do this, we need to consider the 
following four cases:
\begin{enumerate}[label=\normalfont(\roman*)]
    \item In the case of $f(x) \geq a \, \text{and} \, f(y) \geq a$,
    we have $r(x,y)=0$, thus we have $c_{n,x,y}r(x,y)=0$.
    \item In the case of $f(x) < a \, \text{and} \,  f(y) < a$,
    we have $r(x,y)=0$, thus we have $c_{n,x,y}r(x,y)=0$.
    \item In the case of $f(x) < a \, \text{and} \, f(y) \geq a$, 
    we have $r(x,y)=(f(x)-a)(a-f(y)) \geq 0$, thus 
    we have $c_{n,x,y}r(x,y) \ge 0$.
    \item In the case of $f(x) \geq a \, \text{and} \, f(y) < a$, we have
    \begin{align}
        r(x,y)=(a-f(y))(f(x)-a)
        \left\{
        1+
        \sum_{i=1}^{N}
        b_{i}(x)(h_{i}(x)-h_{i}(y))
        \right\}.
    \end{align}
First, we consider the case of Assumption A. By Condition \ref{assmp2}, we get 
\begin{align}
    \abs{\sum_{i=1}^{N}b_{i}(x)(h_{i}(x)-h_{i}(y))}
    &\le R(x,y)^{1/2}
    \mathcal{E}\left(\sum_{i=1}^{N}b_{i}(x)h_{i}\right)^{1/2}\\
    &\le1,
\end{align}
which implies $c_{n,x,y}r(x,y) \ge 0$.

Second, we consider the case of Assumption B. Since $h_{i}$ is 
uniformly continuous on the compact space $(X,R)$ and Lemma~\ref{cond3first}~\ref{enum:1condition3}
holds, for large enough $n \in \N$, we have
\begin{align}\label{localosc}
    \sup_{1 \le i \le N} \left(\norm{b_{i}}_{\infty}
    \sup_{\lambda \in \Lambda_{n}}\mathrm{Osc}(h_{i}, \mathrm{Cl}(U_{\lambda}))\right) 
    \le N^{-1}.
\end{align}
If there is no $\lambda \in \Lambda_{n}$ such that $x, y \in \partial U_{\lambda}$, 
then, from Lemma~\ref{localcoef}, we have $c_{n,x,y}=0$, thus $c_{n,x,y}r(x,y)=0$ 
holds. 

If there is some $\lambda \in \Lambda_{n}$ such that $x, y \in \partial U_{\lambda}$, 
we have, from \eqref{localosc}, 
\begin{align}
1+\sum_{i=1}^{N}b_{i}(x)(h_{i}(x)-h_{i}(y))\geq 0.
\end{align}
Thus, we get $c_{n,x,y}r(x,y) \ge 0$.
\qedhere \end{enumerate}
\end{proof}

\begin{rem}\label{markovchainrepresentation}
Let $L_{n}$ be the generator of the semigroup on $L^2(V_{n},\mu_{n})$ associated with $(\mathcal{A}^{n},l(V_{n}))$. Then, for 
any $f \in l(V_{n})$ and $x \in V_{n}$, we have 
\begin{align}
-\mu_{n}(x)(L_{n}f)(x) &=(-L_{n}f, 1_{\{x\}})_{L^2(V_{n},\mu_{n})} \\
&=\mathcal{A}^{n}(f,1_{\{x\}}) \\
&=\sum_{y \in V_{n}\setminus \{x\}}
c_{n,x,y}(f(x)-f(y))
+\frac{1}{2}\sum_{i=1}^{N}\sum_{y \in V_{n}\setminus \{x\}}
c_{n,x,y}b_{i}(x) \\
&\quad \cdot (h_{i}(x)-h_{i}(y))(f(x)-f(y)).
\end{align}
Thus, we obtain 
\begin{align}
(L_{n}f)(x)=\mu_{n}(x)^{-1}
\sum_{y \in V_{n}\setminus \{x\}}
c_{n,x,y}(1+\eta(x,y))(f(y)-f(x)),
\end{align}
where $\eta(x,y) \eqdot \frac{1}{2}\sum_{i=1}^{N}b_{i}(x)(h_{i}(x)-h_{i}(y))$. We represent $L_{n}$ as a matrix $\{L_{n}(x,y)\}_{x,y \in V_{n}}$. Then, $L_{n}(x,y)=\mu_{n}(x)^{-1}
c_{n,x,y}(1+\eta(x,y))$ for distinct $x,y \in V_{n}$ and 
$L_{n}(x,x)=-\mu_{n}(x)^{-1}
\sum_{y \in V_{n}\setminus \{x\}}
c_{n,x,y}(1+\eta(x,y))$ for $x \in V_{n}$. 
For any $x \in V_{n}$, we define $q_{n}(x)$ by 
\begin{align}
q_{n}(x)
&=-L_{n}(x,x) \\
&=\mu_{n}(x)^{-1}
\sum_{y \in V_{n}\setminus \{x\}}
c_{n,x,y}(1+\eta(x,y)).
\end{align}
For any distinct $x,y \in V_{n}$, we define $\pi_{n}(x,y)$ by
\begin{align}
\pi_{n}(x,y)
&=q_{n}(x)^{-1}L_{n}(x,y) \\
&=\frac{c_{n,x,y}(1+\eta(x,y))}{\sum_{z \in V_{n}\setminus \{x\}}
c_{n,x,z}(1+\eta(x,z))}.
\end{align}
Let $Y_{n}$ be the Markov process on $V_{n}$ associated with $(\mathcal{A}^{n},l(V_{n}))$. Then, the holding time of $Y_{n}$ at $x \in V_{n}$ is exponentially distributed with parameter $q_{n}(x)$ and $Y_{n}$ jumps from $x$ to $y$ with probability $\pi_{n}(x,y)$.
\end{rem}

\begin{prp}\label{semidir2}
$(\mathcal{A},\mathcal{F})$ is a semi-Dirichlet form on $L^{2}(X,\mu)$ with the constant $\lambda$ in $\mathrm{(SD1)}$.
\end{prp}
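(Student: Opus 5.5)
The plan is to check (SD1)--(SD3) with the comparison estimates of this section, and to obtain the Markov property (SD4) from the discrete forms by passing to the limit with Lemma~\ref{importantb}.

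\emph{(SD1) and (SD2).} Proposition~\ref{prp6}, and the remark after it, give
\begin{align}
(1-s)\mathcal{E}_{\lambda+1}(f)\le\mathcal{A}_{\lambda+1}(f)\le(1+s)\mathcal{E}_{\lambda+1}(f),\quad f\in\mathcal{F}.
\end{align}
Since $s<1$, this yields (SD1) with the constant $\lambda$. It also shows that $\mathcal{A}_{\lambda+1}(\cdot)^{1/2}$ is equivalent to $\mathcal{E}_{1}(\cdot)^{1/2}$. The latter is a complete norm because $(\mathcal{E},\mathcal{F})$ is a Dirichlet form on $L^2(X,\mu)$, hence closed. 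This gives (SD2).

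\emph{(SD3).} I copy the discrete argument of Proposition~\ref{semidir1}. From $\abs{\nu_{f,h}(B)}\le\nu_f(B)^{1/2}\nu_h(B)^{1/2}$ and $\nu_f(X)=2\mathcal{E}(f)$, applied to the total variation, I get
\begin{align}
\abs{\mathcal{Q}_i(f,g)}\le\norm{b_i}_\infty\norm{g}_\infty\mathcal{E}(f)^{1/2}\mathcal{E}(h_i)^{1/2}.
\end{align}
Next I bound $\norm{g}_\infty\le\bigl((\mathrm{diam}_RX)^{1/2}+2\delta\bigr)\mathcal{E}_\lambda(g)^{1/2}$. This follows from Proposition~\ref{kihon} together with the elementary inequality $qx+y\le p\{x^2+(p^2-q^2)^{-1}y^2\}^{1/2}$, exactly as in the discrete case. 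Adding $\abs{\mathcal{E}(f,g)}\le\mathcal{E}_\lambda(f)^{1/2}\mathcal{E}_\lambda(g)^{1/2}$ and then using $\mathcal{E}_\lambda\le(1-s)^{-1}\mathcal{A}_\lambda$ (Proposition~\ref{prp6}) gives (SD3) with the same constant $K$ as in Proposition~\ref{semidir1}.

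\emph{(SD4), the main step.} Let $f\in\mathcal{F}$ and $a\ge0$. Then $f\wedge a\in\mathcal{F}$ because $\mathcal{F}$ is closed under normal contractions, $(\mathcal{E},\mathcal{F})$ being a Dirichlet form. Put $u=f\wedge a$ and $v=f-f\wedge a$, both in $\mathcal{F}\subset C(X)$. I claim
\begin{align}
\mathcal{A}(u,v)=\lim_{n\to\infty}\mathcal{A}^n(u|_{V_n},v|_{V_n}).
\end{align}
For the drift part, Lemma~\ref{importantb} with $g=b_iv\in C(X)$, $h=u$ and $h'=h_i$ gives $\mathcal{Q}^n_i(u|_{V_n},v|_{V_n})\to\frac12\int_X b_iv\,d\nu_{u,h_i}=\mathcal{Q}_i(u,v)$. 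For the symmetric part, Lemma~\ref{importantb} with $g\equiv1$ gives $\sum c_{n,x,y}(h(x)-h(y))(h'(x)-h'(y))\to\nu_{h,h'}(X)=2\mathcal{E}(h,h')$, because $1\in\mathcal{F}$ and $\mathcal{E}(1)=0$. Hence $\mathcal{E}^n(u|_{V_n},v|_{V_n})\to\mathcal{E}(u,v)$.

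Now $u|_{V_n}=(f|_{V_n})\wedge a$ and $v|_{V_n}=f|_{V_n}-(f|_{V_n})\wedge a$. By the (SD4) part of Proposition~\ref{semidir1}, valid for large $n$ under Assumption A or B, each $\mathcal{A}^n(u|_{V_n},v|_{V_n})$ is nonnegative. Passing to the limit gives $\mathcal{A}(f\wedge a,f-f\wedge a)\ge0$.

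The main thing to double-check is that Lemma~\ref{importantb} is legitimately applied with a general continuous weight $b_iv$ and two different functions $u,h_i\in\mathcal{F}$. I expect this to follow from its proof by polarization and uniform approximation of the weight, since the sums are bounded uniformly in $n$ through $\mathcal{E}^n\le\mathcal{E}$ on restrictions.
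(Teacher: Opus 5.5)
Your proof is correct. (SD1)--(SD3) follow the paper, but your (SD4) argument takes a genuinely different route.

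The paper stays in the continuum. The function $f-f\wedge a$ vanishes on $\{f<a\}$, and strong locality, via the Chen--Fukushima energy-measure theorem the paper cites, gives $\nu_{f\wedge a}(\{f\ge a\})=0$. Hence $\nu_{f\wedge a,h_i}$ vanishes on $\{f\ge a\}$, so $\mathcal{Q}_i(f\wedge a,f-f\wedge a)=0$ for each $i$ and $\mathcal{A}(f\wedge a,f-f\wedge a)=\mathcal{E}(f\wedge a,f-f\wedge a)\ge0$.

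You instead pass to the limit in the discrete Markov property of Proposition~\ref{semidir1}, using that truncation commutes with restriction to $V_n$. The step you wanted to double-check is fine as is. Lemma~\ref{importantb} is stated for an arbitrary weight $g\in C(X)$ and arbitrary $h,h'\in\mathcal{F}$, so it applies verbatim with $g=b_iv$, $h=u$, $h'=h_i$. For the symmetric part, $\mathcal{E}^n(u|_{V_n},v|_{V_n})=\mathcal{E}(\widehat{u|_{V_n}},\widehat{v|_{V_n}})$ together with Lemma~\ref{techn1} also works.

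The trade-off is as follows:
\begin{itemize}
\item Your route yields only nonnegativity, and it makes the Markov property of $\mathcal{A}$ depend on Condition~\ref{assmp2} or \ref{assmp3} through the discrete (SD4).
\item Under Assumption B, that discrete step goes through Lemma~\ref{localcoef}, whose proof uses the same Chen--Fukushima theorem (absence of atoms of energy measures). So your route avoids that machinery only under Assumption A, where it needs neither strong locality nor energy image density.
\item The paper's argument shows the drift never contributes to $\mathcal{A}(f\wedge a,f-f\wedge a)$. Thus (SD4) holds for arbitrary $b_i\in C(X)$ and $h_i\in\mathcal{F}$, and the standing assumptions are needed only for (SD1)--(SD3), in fact only Condition~\ref{assmp1}.
\end{itemize}
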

\begin{proof}
$\mathrm{(SD1)}$ is clear from Proposition~\ref{prp6}.
$\mathrm{(SD2)}$ follows from the completeness of 
$\mathcal{E}_{\lambda+1}^{1/2}(\cdot)$ and Proposition~\ref{prp6}.
$\mathrm{(SD3)}$ is shown similarly as Proposition~\ref{semidir1}.

We prove $\mathrm{(SD4)}$. We take arbitrary $f \in \mathcal{F}$ and $a \geq 0$.
From the Markov property of  $(\mathcal{E},\mathcal{F})$, we get
\begin{equation}
f \wedge a \in \mathcal{F} \,\text{ and }\,
\mathcal{E}(f \wedge a,f-f \wedge a) \geq 0.
\end{equation}
On the other hand, for $1\leq i \leq N$, 
\begin{align}
    \mathcal{Q}_{i}(f \wedge a,f-f \wedge a)
    &=\frac{1}{2}\int_{\{f < a\}}b_{i}(f-f \wedge a)\,d\nu_{f \wedge a,h_{i}} \\
    &\quad +\frac{1}{2}\int_{\{f \ge a\}}b_{i}(f-f \wedge a)\,d\nu_{f \wedge a,h_{i}} \\
    &=\frac{1}{2}\int_{\{f \ge a\}}b_{i}(f-f \wedge a)\,d\nu_{f \wedge a,h_{i}}
\end{align}
holds. Now, because of the strong locality of $(\mathcal{E},\mathcal{F})$, 
$\nu_{f \wedge a}(\{f \ge a\})=0$ holds. (see \cite[Theorem 4.3.8]{chen2012symmetric}).
Thus, $\nu_{f \wedge a,h_{i}}(\{f \ge a\})=0$ holds. Therefore, we get 
\begin{equation}
    \mathcal{Q}_{i}(f \wedge a,f-f \wedge a)
    =\frac{1}{2}\int_{\{f \ge a\}}b_{i}(f-f \wedge a)\,d\nu_{f \wedge a,h_{i}}
    =0.
    \end{equation}
Thus, we get $\mathcal{A}(f \wedge a,f-f \wedge a)\geq0$.
\qedhere
\end{proof}
\begin{prp}\label{localstrong}
The semi-Dirichlet form $(\mathcal{A},\mathcal{F})$ is strongly local.
\end{prp}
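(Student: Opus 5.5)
We need $\mathcal{A}(f,g)=0$ whenever $f,g\in\mathcal{F}$, $U\subset X$ is open, $\mathop{\mathrm{supp}} g\subset U$ and $f$ is constant on $U$. We treat the two parts of $\mathcal{A}=\mathcal{E}+\sum_{i}\mathcal{Q}_{i}$ separately. The assumptions of Section 3 already state that $(\mathcal{E},\mathcal{F})$ is strongly local, so $\mathcal{E}(f,g)=0$. Since $\mathcal{F}\subset C(X)$ and $(\mathcal{A},\mathcal{F})$ is a semi-Dirichlet form by Proposition~\ref{semidir2}, the definition of strong locality applies, and it only remains to show $\mathcal{Q}_{i}(f,g)=0$ for each $1\le i\le N$.

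Fix $i$. By definition,
\begin{align}
\mathcal{Q}_{i}(f,g)=\frac{1}{2}\int_{X} b_{i}g\,d\nu_{f,h_{i}} .
\end{align}
Since $g$ vanishes outside $\mathop{\mathrm{supp}} g\subset U$, the integral reduces to one over $U$:
\begin{align}
\mathcal{Q}_{i}(f,g)=\frac{1}{2}\int_{U} b_{i}g\,d\nu_{f,h_{i}} .
\end{align}
It therefore suffices to show that the total variation $\abs{\nu_{f,h_{i}}}(U)=0$.

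Lemma~\ref{enerzero} applies: $1\in\mathcal{F}\subset C(X)$, $\mathcal{F}$ is dense in $C(X)$, and $(\mathcal{E},\mathcal{F})$ is strongly local. As $f$ is constant on the open set $U$, it gives $\nu_{f}(B)\le\nu_{f}(U)=0$ for every Borel $B\subset U$. The Cauchy--Schwarz-type inequality in Definition~\ref{defitionenergymeasure} then yields
\begin{align}
\abs{\nu_{f,h_{i}}(B)}\le \nu_{f}(B)^{1/2}\nu_{h_{i}}(B)^{1/2}=0 .
\end{align}
So $\nu_{f,h_{i}}$ vanishes on all Borel subsets of $U$, and hence $\abs{\nu_{f,h_{i}}}(U)=0$. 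Because $b_{i}g$ is bounded and continuous, the integral over $U$ is zero, which gives $\mathcal{Q}_{i}(f,g)=0$.

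The only delicate point is the passage from "$\nu_{f,h_{i}}(B)=0$ for Borel $B\subset U$" to "the integral over $U$ vanishes". This is immediate from the Jordan decomposition, since a signed measure that is zero on every subset of $U$ has zero variation on $U$. Summing over $i$ and adding $\mathcal{E}(f,g)=0$ gives $\mathcal{A}(f,g)=0$, which proves the proposition.
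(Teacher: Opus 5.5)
Your proof is correct and follows the paper's argument. Strong locality of $(\mathcal{E},\mathcal{F})$ gives $\mathcal{E}(f,g)=0$, and each $\mathcal{Q}_i(f,g)$ vanishes because Lemma~\ref{enerzero} gives $\nu_f(U)=0$, so $\nu_{f,h_i}$ vanishes on $U$ while the integral of $b_i g$ reduces to $U$. Your use of $\abs{\nu_{f,h_i}(B)}\le\nu_f(B)^{1/2}\nu_{h_i}(B)^{1/2}$ for Borel $B\subset U$ just makes explicit the step the paper states as ``$\nu_{f,h_i}=0$ on $U$''.
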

\begin{proof}
Let $f,g \in \mathcal{F}$ and assume that there exists an open set $U \subset X$
such that $\mathop{\mathrm{supp}} g \subset U$ and $f \lvert_{U}$ is a constant function.
Then, $\mathcal{E}(f,g)=0$ holds because of the strong locality of $(\mathcal{E},\mathcal{F})$.

Next, we take an arbitrary $1 \leq i \leq N$. Then, since $\nu_{f}(U)=0$ holds from 
Lemma~\ref{enerzero}, we get $\nu_{f,h_{i}}=0$ on $U$. Therefore,
\begin{align}
\mathcal{Q}_{i}(f,g) 
&= \frac{1}{2}\int_{X} b_{i}g \,d\nu_{f,h_{i}} \\
&= \frac{1}{2}\int_{U} b_{i}g \,d\nu_{f,h_{i}} \\
&=0.
\end{align}
Thus, $\mathcal{A}(f,g)=0$ holds.
\end{proof}
\section{Main theorem}
\subsection{Convergence of resolvents}
The following four lemmas are easily proven and their proofs are omitted.
\begin{lem}\label{techn1}
$\limtoinfty{n}\mathcal{E}(g-\widehat{g \lvert_{V_{n}}})=0$ holds for any $g \in \mathcal{F}$.
\end{lem}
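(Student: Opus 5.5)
The plan is to view harmonic extension onto $V_n$ as an orthogonal projection in the Hilbert space $(\mathcal{F}/{\sim},\mathcal{E})$ and to use density of $V_*$. For each $n$, put $\mathcal{H}_n \eqdot \{\widehat{f}\mid f\in l(V_n)\}$, the space of functions harmonic off $V_n$. By Proposition~\ref{resistancetrace}, $\widehat{g\lvert_{V_n}}$ minimizes $\mathcal{E}(u)$ over all $u\in\mathcal{F}$ with $u\lvert_{V_n}=g\lvert_{V_n}$. Since every $u$ with $u\lvert_{V_n}=0$ is admissible as a variation, this gives $\mathcal{E}(\widehat{g\lvert_{V_n}},u)=0$ for all such $u$. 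In particular, $g-\widehat{g\lvert_{V_n}}$ vanishes on $V_n$ and is $\mathcal{E}$-orthogonal to $\mathcal{H}_n$. Hence $P_n g \eqdot \widehat{g\lvert_{V_n}}$ is (modulo constants) the $\mathcal{E}$-orthogonal projection onto $\mathcal{H}_n$. Because $V_n$ is non-decreasing, $\mathcal{H}_n\subset\mathcal{H}_{n+1}$, and $\mathcal{E}(g-P_n g)=\mathcal{E}(g)-\mathcal{E}(P_ng)$ is non-increasing in $n$.

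It then suffices to show that $\bigcup_n \mathcal{H}_n$ is dense in $(\mathcal{F}/{\sim},\mathcal{E})$. Monotone projections onto an increasing family of closed subspaces converge strongly to the projection onto the closure of their union, and if that closure is everything, then $\mathcal{E}(g-P_ng)\to 0$. To prove density, take $u\in\mathcal{F}$ that is $\mathcal{E}$-orthogonal to every $\mathcal{H}_n$. For any $x,y\in V_*$, choose $n$ with $x,y\in V_n$ and apply the orthogonal decomposition to $u$ itself. Since $u\perp\mathcal{H}_n$, we get $P_n u$ constant, so $u\lvert_{V_n}=P_nu\lvert_{V_n}$ is constant and $u(x)=u(y)$. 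Thus $u$ is constant on $V_*$.

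By (RF4), $\abs{u(x)-u(y)}^2\le R(x,y)\mathcal{E}(u)$, so $u$ is $R$-continuous. Since $V_*$ is dense in $(X,R)$, $u$ is constant on $X$, i.e.\ $u=0$ in $\mathcal{F}/{\sim}$. This proves density and hence the lemma.

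The main point needing care is the variational characterization making $P_n$ an orthogonal projection. It follows from minimality together with the fact that $\{u\in\mathcal{F}\mid u\lvert_{V_n}=0\}$ is a linear subspace, so first-order variation gives the orthogonality. One also has to note that $\mathcal{H}_n$ is finite dimensional and hence closed. As an alternative, one could avoid the abstract projection theorem: approximate $g$ by some $\widehat{f}$ with $f\in l(V_m)$, then use $\mathcal{E}(g-P_ng)\le\mathcal{E}(g-\widehat f)$ for $n\ge m$, which holds by the best-approximation property of projections.
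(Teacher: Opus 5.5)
The paper gives no proof to compare against: this lemma is one of the four it calls ``easily proven'' and omits. Your argument is correct and is the standard one. The harmonic extension $\widehat{g\lvert_{V_n}}$ is the $\mathcal{E}$-orthogonal projection of $g$ onto the finite-dimensional space $\mathcal{H}_n$. The spaces $\mathcal{H}_n$ increase with $n$. Their union is dense, because anything orthogonal to all of them is constant on $V_*$, and hence constant on $X$ by (RF4) and the density of $V_*$.

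You use two facts without comment: that $\mathcal{H}_n$ is a linear subspace, and that $\mathcal{H}_n\subset\mathcal{H}_{n+1}$. Both follow from the characterization you derive, namely $u\in\mathcal{H}_n$ if and only if $\mathcal{E}(u,v)=0$ for every $v\in\mathcal{F}$ with $v\lvert_{V_n}=0$. For the ``if'' direction, test with $v=u-\widehat{u\lvert_{V_n}}$. This shows the difference has zero energy, so it is a constant, and since it vanishes on $V_n$ it is $0$. The conditions are linear, so $\mathcal{H}_n$ is a subspace. The class of test functions only shrinks as $n$ grows, which gives $\mathcal{H}_n\subset\mathcal{H}_{n+1}$.

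Finally, passing from a trivial orthogonal complement to density needs completeness, and (RF2) supplies it.
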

\begin{lem}\label{techn2}
Let $\{ f^{(n)}\}_{n \in \N},\{ g^{(n)}\}_{n \in \N} \subset \mathcal{F}$ and $f,g \in \mathcal{F}$. Assume that $\{ f^{(n)}\}_{n \in \N}$ weakly converges to $f$ in the Hilbert space 
$(\mathcal{F}, \mathcal{E}_{1})$ and $\limtoinfty{n}\mathcal{E}(g^{(n)}-g)=0$. Then, 
$\limtoinfty{n}\mathcal{E}(f^{(n)},g^{(n)})=\mathcal{E}(f,g)$.
\end{lem}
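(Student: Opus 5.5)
The plan is to polarize the bilinear form and then treat the two ingredients separately: weak convergence of $\{f^{(n)}\}$ in $(\mathcal{F},\mathcal{E}_{1})$, and strong convergence of $\{g^{(n)}\}$ in the energy seminorm. Throughout I work in the Hilbert space $(\mathcal{F},\mathcal{E}_{1})$, where $\mathcal{E}(u,v)\le\mathcal{E}_{1}(u,v)$-type bounds are available via Cauchy--Schwarz. I first split
\begin{align}
\mathcal{E}(f^{(n)},g^{(n)})-\mathcal{E}(f,g)
=\mathcal{E}(f^{(n)},g^{(n)}-g)+\mathcal{E}(f^{(n)}-f,g).
\end{align}

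For the first term, weak convergence implies boundedness by the uniform boundedness principle, so $M\eqdot\supover{n}\mathcal{E}_{1}(f^{(n)})<\infty$. By Cauchy--Schwarz for the nonnegative symmetric form $\mathcal{E}$ (RF1),
\begin{align}
\abs{\mathcal{E}(f^{(n)},g^{(n)}-g)}\le \mathcal{E}(f^{(n)})^{1/2}\mathcal{E}(g^{(n)}-g)^{1/2}\le M^{1/2}\mathcal{E}(g^{(n)}-g)^{1/2}\to 0 .
\end{align}

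For the second term, the map $u\mapsto\mathcal{E}(u,g)$ is a bounded linear functional on $(\mathcal{F},\mathcal{E}_{1})$, since $\abs{\mathcal{E}(u,g)}\le\mathcal{E}_{1}(u)^{1/2}\mathcal{E}(g)^{1/2}$. Weak convergence of $f^{(n)}$ to $f$ therefore gives $\mathcal{E}(f^{(n)}-f,g)\to0$ directly. (Equivalently, one can write $\mathcal{E}(u,g)=\mathcal{E}_{1}(u,g)-(u,g)_{L^{2}(X,\mu)}$ and note that both pieces are continuous functionals.)

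Adding the two limits yields the claim. No step is a genuine obstacle; the only point needing care is recording the uniform bound on $\mathcal{E}_{1}(f^{(n)})$ from weak convergence before applying Cauchy--Schwarz.
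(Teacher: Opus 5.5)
Your proposal is correct: the splitting $\mathcal{E}(f^{(n)},g^{(n)}-g)+\mathcal{E}(f^{(n)}-f,g)$, with the first term controlled by Cauchy--Schwarz and the uniform $\mathcal{E}_{1}$-bound from weak convergence, and the second by continuity of $u\mapsto\mathcal{E}(u,g)$ on $(\mathcal{F},\mathcal{E}_{1})$, is exactly the standard argument. The paper omits this proof as ``easily proven,'' so there is nothing different to compare against.
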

\begin{lem} \label{techn3}
For any $1 \leq i \leq N$, $n \in \N$ and $f,g \in l(V_{n})$, we have
\begin{align}
    2\mathcal{Q}^{n}_{i}(g,f)
    &=
    \mathcal{E}^{n}(b_{i}\lvert_{V_{n}}fg,h_{i}\lvert_{V_{n}})
    +\mathcal{E}^{n}(b_{i}\lvert_{V_{n}}fh_{i}\lvert_{V_{n}},g)
    -\mathcal{E}^{n}(b_{i}\lvert_{V_{n}}f,gh_{i}\lvert_{V_{n}}).
\end{align}
\end{lem}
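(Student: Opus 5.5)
The identity is purely algebraic, so I would verify it pairwise in the sums defining $\mathcal{E}^{n}$ and $\mathcal{Q}^{n}_{i}$, using only \eqref{coefofen} and the symmetry $c_{n,x,y}=c_{n,y,x}$. Fix $i$ and $n$, and abbreviate $b\eqdot b_{i}\lvert_{V_{n}}$, $h\eqdot h_{i}\lvert_{V_{n}}$ and $\varphi\eqdot bf\in l(V_{n})$. For $u\in l(V_{n})$ write $\Delta u(x,y)\eqdot u(x)-u(y)$. By definition,
\begin{align}
2\mathcal{Q}^{n}_{i}(g,f)=\sum_{\substack{x ,y \in V_{n} \\x \neq y}} c_{n,x,y}\,\varphi(x)\,\Delta g(x,y)\,\Delta h(x,y).
\end{align}
By \eqref{coefofen}, the right-hand side of the claim equals
\begin{align}
\frac{1}{2}\sum_{\substack{x ,y \in V_{n} \\x \neq y}} c_{n,x,y}\,T(x,y),
\end{align}
where
\begin{align}
T(x,y)\eqdot \Delta(\varphi g)\,\Delta h+\Delta(\varphi h)\,\Delta g-\Delta\varphi\,\Delta(gh),
\end{align}
with all differences evaluated at $(x,y)$.

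The key step is the pointwise identity
\begin{align}
T(x,y)=\bigl(\varphi(x)+\varphi(y)\bigr)\,\Delta g(x,y)\,\Delta h(x,y).
\end{align}
To prove it, I would expand $T(x,y)$ into its twelve monomials and group them by whether they contain $\varphi(x)$ or $\varphi(y)$. The coefficient of $\varphi(x)$ is
\begin{align}
g(x)h(x)-g(x)h(y)-g(y)h(x)+g(y)h(y)=\Delta g(x,y)\,\Delta h(x,y).
\end{align}
A symmetric computation shows that the coefficient of $\varphi(y)$ is the same product. This is a routine check.

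Finally, I would insert this identity into the sum. Since $c_{n,x,y}$ and $\Delta g\,\Delta h$ are both symmetric under $x\leftrightarrow y$, the contribution of $\varphi(y)$ equals that of $\varphi(x)$. Hence
\begin{align}
\frac{1}{2}\sum_{x\neq y}c_{n,x,y}\bigl(\varphi(x)+\varphi(y)\bigr)\Delta g\,\Delta h=\sum_{x\neq y}c_{n,x,y}\,\varphi(x)\,\Delta g\,\Delta h=2\mathcal{Q}^{n}_{i}(g,f).
\end{align}
There is no real obstacle here; the only care needed is bookkeeping the signs in the expansion of $T$ and using the symmetry of $c_{n,x,y}$ in the last step.
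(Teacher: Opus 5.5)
Your proposal is correct. The pointwise identity $T(x,y)=\bigl(\varphi(x)+\varphi(y)\bigr)\Delta g(x,y)\,\Delta h(x,y)$ checks out, since both the $\varphi(x)$- and $\varphi(y)$-coefficients reduce to $\Delta g\,\Delta h$, and the symmetry of $c_{n,x,y}$ and of $\Delta g\,\Delta h$ under $x\leftrightarrow y$ then collapses the symmetrized sum to $2\mathcal{Q}^{n}_{i}(g,f)$.

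The paper omits this proof as an easy direct computation from \eqref{coefofen}, and your argument is exactly that computation.
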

\begin{lem}\label{techn4}
    We take an arbitrary $1 \leq i \leq N$ and assume that $b_{i} \in \mathcal{F}$. Then, we have
    \begin{equation}
2\mathcal{Q}_{i}(g,f)
=\mathcal{E}(b_{i}fg,h_{i})
+\mathcal{E}(b_{i}fh_{i},g)
-\mathcal{E}(b_{i}f,gh_{i}), 
\quad f,g \in \mathcal{F}.
\end{equation}
\end{lem}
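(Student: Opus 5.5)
The identity reduces to a polarized form of the defining relation for energy measures in Definition~\ref{defitionenergymeasure}, applied with the test function $\varphi \eqdot b_{i}f$. Since $\mathcal{F}$ is an algebra and $b_{i},f \in \mathcal{F}$, we have $\varphi \in \mathcal{F}$. Also $b_{i}fg = \varphi g$ and $b_{i}fh_{i} = \varphi h_{i}$ lie in $\mathcal{F}$, so every term on the right-hand side is well defined. By the definition of $\mathcal{Q}_{i}$,
\begin{align}
2\mathcal{Q}_{i}(g,f) = \int_{X} b_{i} f \, d\nu_{g,h_{i}} = \int_{X} \varphi \, d\nu_{g,h_{i}}.
\end{align}
It therefore suffices to prove that, for all $\varphi, u, v \in \mathcal{F}$,
\begin{align}
\int_{X} \varphi \, d\nu_{u,v} = \mathcal{E}(\varphi u, v) + \mathcal{E}(\varphi v, u) - \mathcal{E}(\varphi, uv),
\end{align}
and then to set $u = g$ and $v = h_{i}$.

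To prove this, I use $\nu_{u,v} = \frac{1}{2}(\nu_{u+v} - \nu_{u} - \nu_{v})$ together with the defining identity $\int_{X} \varphi \, d\nu_{w} = 2\mathcal{E}(\varphi w, w) - \mathcal{E}(\varphi, w^{2})$. I apply the defining identity to $w = u+v$, $w = u$ and $w = v$, and then expand $\mathcal{E}(\varphi(u+v), u+v)$ and $\mathcal{E}(\varphi, (u+v)^{2})$ by bilinearity. In $\mathcal{E}(\varphi(u+v), u+v)$, the cross terms are $\mathcal{E}(\varphi u, v) + \mathcal{E}(\varphi v, u)$. In $\mathcal{E}(\varphi, (u+v)^{2})$, the cross term is $2\mathcal{E}(\varphi, uv)$. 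Subtracting the $w=u$ and $w=v$ expressions cancels the diagonal parts, so
\begin{align}
\int_{X} \varphi \, d(\nu_{u+v} - \nu_{u} - \nu_{v}) = 2\mathcal{E}(\varphi u, v) + 2\mathcal{E}(\varphi v, u) - 2\mathcal{E}(\varphi, uv).
\end{align}
Dividing by $2$ gives the claimed identity.

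Substituting $\varphi = b_{i}f$, $u = g$ and $v = h_{i}$ then yields the statement. There is no real obstacle here. The only point to check is that all products involved lie in $\mathcal{F}$, so that the defining identity of the energy measure may be applied with these test functions. This holds because $\mathcal{F}$ is an algebra, which is exactly why the hypothesis $b_{i} \in \mathcal{F}$ is imposed. The formula is the continuous counterpart of Lemma~\ref{techn3}.
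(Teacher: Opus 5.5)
Your proof is correct and is essentially the argument the paper intends; the paper omits this proof as easy. Your argument polarizes the defining identity $\int_X \varphi\,d\nu_w = 2\mathcal{E}(\varphi w,w)-\mathcal{E}(\varphi,w^{2})$ and applies it with $\varphi=b_{i}f$, $u=g$, $v=h_{i}$, which is legitimate because $\mathcal{F}$ is an algebra; it mirrors the discrete computation behind Lemma~\ref{techn3}.
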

\begin{prp}\label{main1}
Let $\{G_{\alpha} \}_{\alpha >\lambda}$ $($resp.\ $\{G^{(n)}_{\alpha} \}_{\alpha >\lambda}$$)$ be 
the strongly continuous resolvents on $L^{2}(X,\mu)$ $($resp.\ $L^2(V_{n},\mu_{n})$$)$ associated with $(\mathcal{A},\mathcal{F})$ 
$($resp.\ $(\mathcal{A}^{n},l(V_{n}))$$)$. Then, for all $\alpha >\lambda$, 
$\{G^{(n)}_{\alpha} \}_{n \in \N}$ strongly K-S converges to $G_{\alpha}$.
\end{prp}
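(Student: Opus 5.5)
We will verify conditions (F1) and (F2) of Proposition~\ref{rslvconv} with $\mathscr{H}_n=L^2(V_n,\mu_n)$, $\mathscr{H}_\infty=L^2(X,\mu)$ (separable, since $X$ is compact metric), $\mathcal{F}^n=l(V_n)$, and the common constant $\lambda$. This is legitimate by Propositions~\ref{semidir1} and \ref{semidir2}. Throughout we work with $n$ large enough that Proposition~\ref{prp5} applies.

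\textbf{Step 1: condition (F1).} Suppose $\{u_k\}$ weakly K-S converges to $u$ and $\sup_k\mathcal{A}^{n_k}_{\lambda+1}(u_k)<\infty$. By Proposition~\ref{prp5}, with $\lambda$ replaced by $\lambda+1$, we get $\sup_k\mathcal{E}^{n_k}_{\lambda+1}(u_k)<\infty$, and hence $\sup_k\mathcal{E}^{n_k}_1(u_k)<\infty$. Corollary~\ref{cor2}, applied along the subsequence, then gives $u\in\mathcal{F}$. It also gives more: $\widehat{u_k}$ converges weakly to $u$ in $(\mathcal{F},\mathcal{E}_1)$, and we keep this for Step 2.

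\textbf{Step 2: condition (F2).} Given such $u_k$ and $w\in\mathcal{F}$, take $w_k\eqdot w\lvert_{V_{n_k}}$. By Proposition~\ref{prp1}, $w_k$ strongly K-S converges to $w$. We will prove the stronger statement $\lim_k\mathcal{A}^{n_k}(w_k,u_k)=\mathcal{A}(w,u)$, splitting into the symmetric part and the drift part.

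For the symmetric part, note $\mathcal{E}^{n_k}(w_k,u_k)=\mathcal{E}(\widehat{w_k},\widehat{u_k})$, because harmonic extensions are $\mathcal{E}$-orthogonal to functions vanishing on $V_{n_k}$. By Lemma~\ref{techn1}, $\mathcal{E}(\widehat{w_k}-w)\to0$. Combined with the weak convergence $\widehat{u_k}\rightharpoonup u$, Lemma~\ref{techn2} gives convergence to $\mathcal{E}(w,u)$.

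For the drift part $\mathcal{Q}^{n_k}_i(w_k,u_k)$, first assume $b_i\in\mathcal{F}$. Lemma~\ref{techn3} rewrites $2\mathcal{Q}^{n_k}_i(w_k,u_k)$ as the sum of three terms
\begin{align}
\mathcal{E}(\widehat{(b_iu_kw)\lvert_{V_{n_k}}},\widehat{h_i\lvert_{V_{n_k}}}),\quad
\mathcal{E}(\widehat{(b_iu_kh_i)\lvert_{V_{n_k}}},\widehat{w_k}),\quad
-\mathcal{E}(\widehat{(b_iu_k)\lvert_{V_{n_k}}},\widehat{(wh_i)\lvert_{V_{n_k}}}).
\end{align}
Since $\mathcal{F}$ is an algebra, Proposition~\ref{seki} (applied with $g=b_iw$, $b_ih_i$, $b_i\in\mathcal{F}$) shows that each first argument converges weakly in $(\mathcal{F},\mathcal{E}_1)$ to $b_iuw$, $b_iuh_i$, $b_iu$ respectively. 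Each second argument converges in energy by Lemma~\ref{techn1}. Lemma~\ref{techn2} and Lemma~\ref{techn4} then yield $\mathcal{Q}^{n_k}_i(w_k,u_k)\to\mathcal{Q}_i(w,u)$.

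\textbf{Step 3: general $b_i\in C(X)$ (the main obstacle).} Here we approximate $b_i$ uniformly by $b_i^{(m)}\in\mathcal{F}$, which is possible by density of $\mathcal{F}$ in $C(X)$. We need error bounds uniform in $k$. By the Cauchy--Schwarz estimate used in Lemma~\ref{daigilem}, the discrete error satisfies
\begin{align}
\abs{\mathcal{Q}^{n}_i(w,u)-\mathcal{Q}^{n,(m)}_i(w,u)}\le\tfrac12\norm{b_i-b_i^{(m)}}_\infty\norm{u}_\infty\,2\,\mathcal{E}^n(w)^{1/2}\mathcal{E}^n(h_i)^{1/2},
\end{align}
where $\mathcal{Q}^{n,(m)}_i$ is $\mathcal{Q}^n_i$ with $b_i$ replaced by $b_i^{(m)}$. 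The factor $\sup_k\norm{u_k}_\infty$ is finite by Proposition~\ref{kihon}, and $\mathcal{E}^n(w)\le\mathcal{E}(w)$, $\mathcal{E}^n(h_i)\le\mathcal{E}(h_i)$. So this error is $O(\norm{b_i-b_i^{(m)}}_\infty)$ uniformly in $k$. The continuum counterpart is bounded by $\norm{b_i-b_i^{(m)}}_\infty\norm{u}_\infty\nu_w(X)^{1/2}\nu_{h_i}(X)^{1/2}$. An $\varepsilon/3$ argument then completes (F2), and Proposition~\ref{rslvconv} gives the strong K-S convergence of $G^{(n)}_\alpha$ to $G_\alpha$ for every $\alpha>\lambda$.
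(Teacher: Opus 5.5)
Your proposal is correct and follows the paper's proof essentially step for step. You verify (F1) via Proposition~\ref{prp5} and Corollary~\ref{cor2}; for (F2) you use the recovery sequence $w\lvert_{V_{n_k}}$, handle the symmetric part via Lemmas~\ref{techn1} and \ref{techn2}, treat the drift part for $b_i\in\mathcal{F}$ via Lemma~\ref{techn3}, Proposition~\ref{seki} and Lemma~\ref{techn4}, and reach general $b_i\in C(X)$ by uniform approximation with $k$-uniform Cauchy--Schwarz error bounds, exactly as the paper does.
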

\begin{proof}
Since $L^{2}(X,\mu)$ is separable, it suffices to show $\mathrm{(F1)}$ and $\mathrm{(F2)}$ 
of Proposition~\ref{rslvconv}.
\begin{enumerate}
\item[$\mathrm{(F1)}$]
Let $n_{k} \uparrow \infty, f_{k} \in L^2(V_{n_{k}},\mu_{n_{k}}), f \in L^{2}(X,\mu)$ and 
assume that $\{ f_{k} \}_{k \in \N}$ weakly K-S converges to $f$ and 
$\supover{k} \mathcal{A}^{n_{k}}_{\lambda+1}(f_{k}) < \infty$ holds. Then, from Proposition~\ref{prp5}, we have
\begin{equation}
\supover{k} \mathcal{E}^{n_{k}}_{1}(f_{k})
\leq
\supover{k} \mathcal{E}^{n_{k}}_{\lambda+1}(f_{k})
\leq
(1-s)^{-1}\supover{k} \mathcal{A}^{n_{k}}_{\lambda+1}(f_{k}) < \infty.
\end{equation}
Therefore, we have $f \in \mathcal{F}$ from Corollary~\ref{cor2}.
\item[$\mathrm{(F2)}$]
Let $n_{k} \uparrow \infty, f_{k} \in L^2(V_{n_{k}},\mu_{n_{k}}), f,g  \in \mathcal{F}$ and 
assume that $\{ f_{k} \}_{k \in \N}$ weakly K-S converges to $f$ and 
$\supover{k} \mathcal{A}^{n_{k}}_{\lambda+1}(f_{k}) < \infty$ holds. Then, 
since $\{ g\lvert_{V_{n_{k}}} \}_{k \in \N}$ strongly K-S converges to $g$, it suffices to show 
\begin{equation}
\liminftoinfty{k}\mathcal{A}^{n_{k}}(g\lvert_{V_{n_{k}}},f_{k})
\leq \mathcal{A}(g,f).
\end{equation}
Since we have $\supover{k} \mathcal{E}^{n_{k}}_{1}(f_{k}) < \infty$ in the same way as $\mathrm{(F1)}$, 
$\{ \widehat{f_{k}} \}_{k \in \N}$ weakly converges to $f$ in the Hilbert space 
$(\mathcal{F}, \mathcal{E}_{1})$ by Corollary~\ref{cor2}. On the other hand, 
$\limtoinfty{k}\mathcal{E}(g-\widehat{g \lvert_{V_{n_{k}}}})=0$ holds by Lemma~\ref{techn1}. 
Thus, by Lemma~\ref{techn2}, we have 
\begin{equation}
\limtoinfty{k}
\mathcal{E}^{n_{k}}(g\lvert_{V_{n_{k}}},f_{k})
=
\limtoinfty{k}
\mathcal{E}(\widehat{g\lvert_{V_{n_{k}}}},\widehat{f_{k}})
=\mathcal{E}(g,f).
\end{equation}
Thus, in order to get the conclusion, it suffices to show 
\begin{equation}
\liminftoinfty{k}\mathcal{Q}^{n_{k}}(g\lvert_{V_{n_{k}}},f_{k})
\leq \mathcal{Q}(g,f).
\end{equation}
The proof is given by the following two steps.
\begin{enumerate}[label=\normalfont(\roman*)]
\item If $b_{i} \in \mathcal{F}$ holds for all $1 \leq i \leq N$, we put 
$g^{\prime}_{i} \eqdot b_{i}g \in \mathcal{F}, h^{\prime}_{i} \eqdot  b_{i}h_{i} \in \mathcal{F}$. 
Then, by Lemma~\ref{techn3}, we have 
{\small
\begin{align}\label{calcul}
    2\mathcal{Q}^{n_{k}}_{i}(g\lvert_{V_{n_{k}}},f_{k})
    &=
    \mathcal{E}^{n_{k}}
    (b_{i}\lvert_{V_{n_{k}}}f_{k}g\lvert_{V_{n_{k}}},h_{i}\lvert_{V_{n_{k}}})
    +\mathcal{E}^{n_{k}}
    (b_{i}\lvert_{V_{n_{k}}}f_{k}h_{i}\lvert_{V_{n_{k}}},g\lvert_{V_{n_{k}}}) \notag \\
    &\quad-\mathcal{E}^{n_{k}}
    (b_{i}\lvert_{V_{n_{k}}}f_{k},g\lvert_{V_{n_{k}}}h_{i}\lvert_{V_{n_{k}}}) \notag \\
    &=
    \mathcal{E}^{n_{k}}
    (g^{\prime}_{i}\lvert_{V_{n_{k}}}f_{k},h_{i}\lvert_{V_{n_{k}}})
    +\mathcal{E}^{n_{k}}
    (h^{\prime}_{i}\lvert_{V_{n_{k}}}f_{k},g\lvert_{V_{n_{k}}}) \notag \\
    &\quad{} -\mathcal{E}^{n_{k}}
    (b_{i}\lvert_{V_{n_{k}}}f_{k},(gh_{i})\lvert_{V_{n_{k}}}) \notag \\
    &=
    \mathcal{E}
    (\widehat{ g^{\prime}_{i}\lvert_{ V_{n_{k}}}f_{k}},
    \widehat{h_{i}\lvert_{V_{n_{k}}}})
    +\mathcal{E}
    (\widehat{h^{\prime}_{i}\lvert_{V_{n_{k}}}f_{k}},
    \widehat{g\lvert_{V_{n_{k}}}}) \notag \\
    &\quad{} -\mathcal{E}
    (\widehat{b_{i}\lvert_{V_{n_{k}}}f_{k}},
    \widehat{(gh_{i})\lvert_{V_{n_{k}}}}).
    \end{align}
}
Since $\{ f_{k} \}_{k \in \N}$ weakly K-S converges to $f$ and 
$\supover{k} \mathcal{E}^{n_{k}}_{1}(f_{k}) < \infty$ holds, 
$\{ \widehat{g^{\prime}_{i}\lvert_{ V_{n_{k}}}f_{k}} \}_{k \in \N}$
$($resp.\ $\{ \widehat{h^{\prime}_{i}\lvert_{V_{n_{k}}}f_{k}} \}_{k \in \N}$, 
$\{ \widehat{b_{i}\lvert_{V_{n_{k}}}f_{k}} \}_{k \in \N}$$)$
weakly converges to $g^{\prime}_{i}f$
$($resp.\ $h^{\prime}_{i}f$, $b_{i}f$$)$ in $(\mathcal{F},\mathcal{E}_{1})$ 
by Proposition~\ref{seki}. On the other hand, by Lemma~\ref{techn1}, 
\begin{align}
&\limtoinfty{k}\mathcal{E}(\widehat{h_{i}\lvert_{V_{n_{k}}}}-h_{i})=0, \\
&\limtoinfty{k}\mathcal{E}(\widehat{g\lvert_{V_{n_{k}}}}-g)=0, \\
&\limtoinfty{k}\mathcal{E}(\widehat{gh_{i}\lvert_{V_{n_{k}}}}-gh_{i})=0 
\end{align}
hold. Thus, by Lemma~\ref{techn2},
\begin{align}
    &\limtoinfty{k}
    \mathcal{E}
    (\widehat{ g^{\prime}_{i}\lvert_{ V_{n_{k}}}f_{k}},
    \widehat{h_{i}\lvert_{V_{n_{k}}}})
    =\mathcal{E}(g^{\prime}_{i}f,h_{i})
    =\mathcal{E}(b_{i}gf,h_{i}),\\
    &\limtoinfty{k}
    \mathcal{E}
    (\widehat{h^{\prime}_{i}\lvert_{V_{n_{k}}}f_{k}},
    \widehat{g\lvert_{V_{n_{k}}}})
     =\mathcal{E}(h^{\prime}_{i}f,g)
     =\mathcal{E}(b_{i}h_{i}f,g),\\
    &\limtoinfty{k}
    \mathcal{E}
    (\widehat{b_{i}\lvert_{V_{n_{k}}}f_{k}},
    \widehat{(gh_{i})\lvert_{V_{n_{k}}}})
    =\mathcal{E}(b_{i}f,gh_{i})
\end{align}
hold. Thus, together with the above equations with \eqref{calcul}, 
\begin{align}
\limtoinfty{k}
\mathcal{Q}^{n_{k}}_{i}(g\lvert_{V_{n_{k}}},f_{k})
&=\frac{1}{2}
\left(
\mathcal{E}(b_{i}gf,h_{i})+\mathcal{E}(b_{i}h_{i}f,g)-\mathcal{E}(b_{i}f,gh_{i}) 
\right)\\
&=\mathcal{Q}_{i}(g,f)
\end{align}
holds by Lemma~\ref{techn4}. Therefore, we have 
\begin{equation}
\limtoinfty{k}
\mathcal{Q}^{n_{k}}(g\lvert_{V_{n_{k}}},f_{k}) 
=\mathcal{Q}(g,f).
\end{equation}
\item In the general case, firstly note that in the same way as Lemma~\ref{daigilem}, we have 
\begin{align}
&\abs{\mathcal{Q}^{n_{k}}_{i}(g\lvert_{V_{n_{k}}},f_{k})} \\
&\le \norm{b_{i}}_{\infty}\norm{f_{k}}_{\infty}
\mathcal{E}^{n_{k}}(g\lvert_{V_{n_{k}}})^{1/2}
\mathcal{E}^{n_{k}}(h_{i} \lvert_{V_{n_{k}}})^{1/2} \\
&\leq
\norm{b_{i}}_{\infty}
\mathcal{E}(g)^{1/2}
\mathcal{E}(h_{i})^{1/2}
\norm{f_{k}}_{\infty} \\
&\leq
\norm{b_{i}}_{\infty}
\mathcal{E}(g)^{1/2}
\mathcal{E}(h_{i})^{1/2}
\left( \left(\mathrm{diam}_{R}X\right)^{1/2}+1 \right)
\supover{m}
 \mathcal{E}^{n_{m}}_{1}(f_{m})^{1/2}
\end{align}
and 
\begin{align}
    \abs{\mathcal{Q}_{i}(g,f)}
    &=\abs{
    \frac{1}{2}
    \int_{X}b_{i}f\,d\nu_{g, h_{i}}}\\
    &\leq\frac{1}{2}
    \norm{b_{i}}_{\infty}
    \norm{f}_{\infty}
    \norm{\nu_{g, h_{i}}} \\
    &\leq\frac{1}{2}
    \norm{b_{i}}_{\infty}
    \norm{f}_{\infty}
    \mathcal{E}(g)^{1/2}
    \mathcal{E}(h_{i})^{1/2},   
\end{align}
where $\norm{\nu_{g, h_{i}}}$ denotes the total variation of $\nu_{g, h_{i}}$. 
Thus, we can approximate $\mathcal{Q}_{i}(g,f)$ and $\mathcal{Q}^{n_{k}}_{i}(g\lvert_{V_{n_{k}}},f_{k})$ uniformly with respect to $k$ by choosing $b^{\prime}_{i} \in \mathcal{F}$ close enough to $b_{i}$ in $(C(X), \norm{\cdot}_{\infty})$ to obtain the conclusion.
\qedhere \end{enumerate}
\end{enumerate}
\end{proof}
\subsection{Convergence of Feller processes}
From now on, let $\{ T_{t}\}_{t \geq 0}$ (resp.\ $\{ T^{n}_{t}\}_{t \geq 0}$), 
$\{G_{\alpha} \}_{\alpha > \lambda}$ (resp.\ $\{G^{n}_{\alpha} \}_{\alpha > \lambda}$) and 
$(L,\mathcal{D}(L))$ 
be the strongly continuous semigroup, resolvents and the generator associated 
with $(\mathcal{A},\mathcal{F})$ (resp.\ $(\mathcal{A}^{n},l(V_{n}))$). Note that the complexification 
of $\{ T_{t}\}_{t \geq 0}$ is the restriction of an analytic semigroup
(see \cite[Corollary 2.21]{ma2012introduction}). Thus we have 
$T_{t}(L^2(X,\mu)) \subset \mathcal{D}(L)$ and especially 
$T_{t}(\mathcal{F}) \subset \mathcal{F}$ (see \cite{pazy2012semigroups}).

In the following, for any function $f$ on $(X,\mu)$, we distinguish $f$ and its equivalence class $[f]$ consisting of all the functions 
which equal $f$ $\mu \text{-a.e.}$ We define $\iota \colon C(X) \rightarrow L^{2}(X, \mu)$ by $\iota(f)=[f]$. Note that,
because $\mathop{\mathrm{supp}} \mu =X$, $\iota$ is injective and 
$\iota^{-1}(\mathcal{D}(L))\subset\mathcal{F}\subset C(X)$ holds.
For any $t \geq 0$, we define $S_{t} \colon \mathcal{F} \rightarrow \mathcal{F}$ by 
$[S_{t}f]=T_{t}[f], \quad f \in \mathcal{F}$.
\begin{lem} \label{dns}
$\iota^{-1}(\mathcal{D}(L))$ is dense in $(C(X),\norm{\cdot}_{\infty})$.
\end{lem}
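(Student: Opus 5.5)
The plan is to show first that $\iota^{-1}(\mathcal{D}(L))$ is dense in $\mathcal{F}$ with respect to the norm $\mathcal{E}_{1}^{1/2}$, and then to pass to the uniform norm. The second step uses Proposition~\ref{kihon} together with the density of $\mathcal{F}$ in $(C(X),\norm{\cdot}_{\infty})$, which follows from Stone--Weierstrass as noted in Section~2.

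\emph{Step 1: reduction.} Let $f \in C(X)$ and $\varepsilon>0$. Since $\mathcal{F}$ is dense in $C(X)$, pick $g \in \mathcal{F}$ with $\norm{f-g}_{\infty}<\varepsilon/2$. By Proposition~\ref{kihon}, for any $u \in \mathcal{F}$,
\[
\norm{g-u}_{\infty}\le \left(\left(\mathrm{diam}_{R}X\right)^{1/2}+1\right)\mathcal{E}_{1}(g-u)^{1/2}.
\]
It therefore suffices to find $u\in\iota^{-1}(\mathcal{D}(L))$ with $\mathcal{E}_{1}(g-u)$ small. Recall that $\iota^{-1}(\mathcal{D}(L))\subset\mathcal{F}$, since $\mathcal{D}(L)\subset\mathcal{F}$ as equivalence classes and each class has at most one continuous representative because $\mathop{\mathrm{supp}}\mu=X$.

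\emph{Step 2: $\mathcal{D}(L)$ is dense in $(\mathcal{F},\mathcal{E}_{1})$.} I would use the resolvent. For $\alpha>\lambda$, define $u_{\alpha}\in\mathcal{F}$ by $[u_{\alpha}]=\alpha G_{\alpha}[g]\in\mathcal{D}(L)$. The standard theory of closed forms (e.g.\ \cite{ma2012introduction}, \cite{oshima2013semi}) gives $\alpha G_{\alpha}u\to u$ in the $\mathcal{A}_{\lambda+1}$-norm for $u\in\mathcal{F}$, as $\alpha\to\infty$.

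If I want to argue this directly, set $w_{\alpha}=u_{\alpha}-g$. The resolvent identity \eqref{characterresolvent} gives $\mathcal{A}_{\alpha}(u_{\alpha},v)=\alpha(g,v)$, which rearranges to
\[
\mathcal{A}(w_{\alpha},v)+\alpha(w_{\alpha},v)=-\mathcal{A}(g,v).
\]
Test with $v=w_{\alpha}$ and use Proposition~\ref{prp6} together with the sector condition (SD3). This bounds $\mathcal{E}_{\lambda}(w_{\alpha})+(\alpha-\lambda)\norm{w_{\alpha}}^{2}$ by $K\mathcal{A}_{\lambda}(g)^{1/2}\mathcal{A}_{\lambda}(w_{\alpha})^{1/2}$, so $w_{\alpha}$ is bounded in $\mathcal{E}_{1}$ and $\norm{w_{\alpha}}_{L^{2}}\to0$. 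Hence $w_{\alpha}\to0$ weakly in $(\mathcal{F},\mathcal{E}_{1})$.

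To upgrade this to strong convergence, replace $g$ in the right-hand side by an element $g'$ of the (dense) domain, where the estimate gives a rate. The difference is then controlled uniformly in $\alpha$ by the $\mathcal{A}_{\lambda}$-contractivity of $\alpha G_{\alpha}$, namely $\mathcal{A}_{\lambda+1}(\alpha G_{\alpha}v)\le K^{2}\mathcal{A}_{\lambda+1}(v)$ up to constants. A cleaner shortcut is to cite \cite[Theorem~I.2.13]{ma2012introduction} for this convergence.

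\emph{Step 3: conclude.} Combining the two steps, choose $\alpha$ so large that $\mathcal{E}_{1}(g-u_{\alpha})^{1/2}\left(\left(\mathrm{diam}_{R}X\right)^{1/2}+1\right)<\varepsilon/2$. Then $\norm{f-u_{\alpha}}_{\infty}<\varepsilon$, and $u_{\alpha}\in\iota^{-1}(\mathcal{D}(L))$.

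The main obstacle is Step 2, which in the non-symmetric setting rests on the sector condition. If I avoid citing the textbook result, the natural substitute is to show that a functional on $(\mathcal{F},\mathcal{E}_{1})$ vanishing on $\mathcal{D}(L)$ must vanish. Such a functional can be represented as $\mathcal{A}_{\lambda+1}(\cdot,z)$ for some $z\in\mathcal{F}$, by Lax--Milgram using (SD3) and Proposition~\ref{prp6}. Testing against $G_{\lambda+1}[\varphi]$ for arbitrary $\varphi\in L^{2}(X,\mu)$ gives $(\varphi,z)=0$, hence $z=0$.
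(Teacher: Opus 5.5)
Your proposal is correct and follows essentially the paper's route. Both proofs use the density of $\mathcal{D}(L)$ in the form domain under the form norm (the paper uses $\mathcal{A}_{\lambda+1}(\cdot)^{1/2}$, equivalent to your $\mathcal{E}_{1}(\cdot)^{1/2}$ by Proposition~\ref{prp6}), transferred to $(C(X),\norm{\cdot}_{\infty})$ through Proposition~\ref{kihon} and the density of $\mathcal{F}$; the paper takes that first density as standard, while your citation or Lax--Milgram argument justifies it.

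Your ``upgrade'' paragraph is circular as written, since approximating $g$ by domain elements in form norm is exactly what is being proved. It is also unnecessary: testing with $v=w_{\alpha}$ already gives $\mathcal{A}_{\lambda}(w_{\alpha})+(\alpha-\lambda)\norm{w_{\alpha}}_{L^{2}(X,\mu)}^{2}=-\mathcal{A}(g,w_{\alpha})\to 0$, because $\mathcal{A}(g,\cdot)$ is a bounded functional on $(\mathcal{F},\mathcal{E}_{1})$ and $w_{\alpha}\to 0$ weakly there.
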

\begin{proof}
We define a map $j \colon \iota(\mathcal{F}) \rightarrow C(X)$ by $j([f])=f$. 
Then, $j$ is a bounded operator from $(\iota(\mathcal{F}),\mathcal{A}_{\lambda+1}(\cdot)^{1/2})$ 
to $(C(X),\norm{\cdot}_{\infty})$ since the following holds:
\begin{align}
\norm{f}_{\infty} 
 &\leq
 \left( \left(\mathrm{diam}_{R}X \right)^{1/2}+1 \right) \mathcal{E}_{1}([f])^{1/2} \\
 &\leq
 \left( \left(\mathrm{diam}_{R}X\right)^{1/2}+1 \right)  \mathcal{E}_{\lambda+1}([f])^{1/2} \\
 &\leq
 (1-s)^{-1}
 \left( \left(\mathrm{diam}_{R}X\right)^{1/2}+1 \right) \mathcal{A}_{\lambda+1}([f])^{1/2}, 
 \quad f \in \mathcal{F}
\end{align}
with the aid of Propositions~\ref{kihon} and \ref{prp6}. 
Since $j(\iota(\mathcal{F}))=\mathcal{F}$ 
is dense in $(C(X),\norm{\cdot}_{\infty})$ and $\mathcal{D}(L)$ is dense in 
$(\iota(\mathcal{F}),\mathcal{A}_{\lambda+1}(\cdot)^{1/2})$, 
$\iota^{-1}(\mathcal{D}(L))=j(\mathcal{D}(L))$ is dense in $(C(X),\norm{\cdot}_{\infty})$.
\end{proof}
\begin{lem}\label{fsmg} 
$\{ S_{t}\}_{t \geq 0}$ satisfies the following:
\begin{enumerate}[label=\normalfont(\roman*)]
\item For any $f \in \mathcal{F}$ and $t \geq 0$, $\norm{S_{t}f}_{\infty}\leq \norm{f}_{\infty}$. 
\item For any $f \in \mathcal{F}$ and $t_{1},t_{2} \geq 0$, $S_{t_{1}+t_{2}}f=S_{t_{1}}S_{t_{2}}f$.
\item For any $f \in \mathcal{F}$, $\lim_{t \downarrow 0}\norm{S_{t}f-f}_{\infty}=0$.
\end{enumerate}
\end{lem}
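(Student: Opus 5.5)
The plan is to derive all three properties from the corresponding facts about $\{T_t\}_{t\ge0}$ on $L^2(X,\mu)$. We transfer them to uniform statements using two tools: continuity of elements of $\mathcal{F}$ together with $\mathop{\mathrm{supp}}\mu=X$, and the Sobolev-type bound of Proposition~\ref{kihon} combined with Proposition~\ref{prp6}. First note that $S_t$ is well defined. We have $T_t[f]\in\iota(\mathcal{F})$ because $T_t(\mathcal{F})\subset\mathcal{F}$, and its continuous representative is unique because $\iota$ is injective.

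For (i), put $M=\norm{f}_{\infty}$. Apply the Markov property of the semigroup (the remark after (SD4), equivalent to (SD4), which holds by Proposition~\ref{semidir2}) to the positive and negative parts of $f$. If $0\le f\le M$, then $0\le T_t[f]\le M$ $\mu$-a.e., by scaling the statement for $0\le f\le1$. For general $f$, write $f=f^+-f^-$ with $0\le f^{\pm}\le M$. Positivity gives $-T_tf^-\le T_tf\le T_tf^+$, hence $\abs{T_t[f]}\le M$ $\mu$-a.e. Since $S_tf$ is continuous and $\mu$ has full support, the set $\{\abs{S_tf}>M\}$ is open and $\mu$-null, hence empty. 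This gives $\norm{S_tf}_{\infty}\le\norm{f}_{\infty}$. The bound for the sub-Markov (not necessarily conservative) case only needs positivity and $T_t1\le1$, which is what (SD4) provides. It is worth stating this carefully.

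For (ii), the semigroup property $T_{t_1+t_2}=T_{t_1}T_{t_2}$ gives $[S_{t_1+t_2}f]=T_{t_1}T_{t_2}[f]=T_{t_1}[S_{t_2}f]=[S_{t_1}S_{t_2}f]$. Here $S_{t_2}f\in\mathcal{F}$, so the right-hand side is defined. Injectivity of $\iota$ then gives equality of functions.

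For (iii), I would first treat $f\in\iota^{-1}(\mathcal{D}(L))$. By the estimate in the proof of Lemma~\ref{dns},
\begin{align}
\norm{S_tf-f}_{\infty}\le(1-s)^{-1}\left(\left(\mathrm{diam}_{R}X\right)^{1/2}+1\right)\mathcal{A}_{\lambda+1}(T_t[f]-[f])^{1/2}.
\end{align}
So it suffices that $T_tu\to u$ in the $\mathcal{A}_{\lambda+1}$-norm for $u\in\mathcal{D}(L)$. This follows from
\begin{align}
\mathcal{A}_{\lambda+1}(T_tu-u)=((\lambda+1-L)(T_tu-u),T_tu-u)_{L^2}=(T_t(\lambda+1-L)u-(\lambda+1-L)u,\,T_tu-u)_{L^2}.
\end{align}
The second equality uses that $L$ commutes with $T_t$ on $\mathcal{D}(L)$. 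The right-hand side tends to $0$ by strong continuity in $L^2$. Alternatively, one can invoke the standard fact that the restriction of $T_t$ to $\mathcal{F}$ is strongly continuous in the form norm. Then extend to all $f\in\mathcal{F}$ by density (Lemma~\ref{dns}) and an $\varepsilon/3$ argument. Choose $g\in\iota^{-1}(\mathcal{D}(L))$ with $\norm{f-g}_{\infty}<\varepsilon$. Part (i) gives $\norm{S_tf-S_tg}_{\infty}=\norm{S_t(f-g)}_{\infty}\le\varepsilon$, using linearity of $S_t$, which follows from linearity of $T_t$ and injectivity of $\iota$. Then $\limsup_{t\downarrow0}\norm{S_tf-f}_{\infty}\le2\varepsilon$.

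The main obstacle is the form-norm continuity for $u\in\mathcal{D}(L)$. One must verify that $\mathcal{A}_{\lambda+1}(v)=((\lambda+1-L)v,v)$ is valid for $v\in\mathcal{D}(L)$, which is immediate from the generator–form correspondence. Everything else is routine.
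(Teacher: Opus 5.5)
Your proposal is correct and follows essentially the same route as the paper. You use the Markov property of $\{T_t\}_{t\ge0}$ applied to $f^{\pm}$ for (i) and injectivity of $\iota$ for (ii). For (iii) you use the identity $\mathcal{A}_{\lambda+1}(T_tu-u)=-(T_tLu-Lu,T_tu-u)_{L^2}+(\lambda+1)\norm{T_tu-u}^2_{L^2}$ on $\mathcal{D}(L)$ together with Propositions~\ref{kihon} and \ref{prp6}, then the density from Lemma~\ref{dns} and the contraction bound from (i); your explicit passage from $\mu$-a.e.\ bounds to pointwise bounds via continuity and $\mathop{\mathrm{supp}}\mu=X$ is a detail the paper leaves implicit.
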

\begin{proof}
\begin{enumerate}[label=\normalfont(\roman*)]
\item We take an arbitrary $f \in \mathcal{F}$. If $f \ge 0$ holds, 
by using the Markov property of $\{T_{t}\}_{t \geq 0}$, 
$0\leq S_{t}f \leq \norm{f}_{\infty}$ holds. In the general case, we apply 
the above to $f^{+}=f \vee 0$ and $f^{-}=(-f) \vee 0 \in \mathcal{F}$ to obtain 
$0\leq S_{t}f^{\pm} \leq \norm{f^{\pm}}_{\infty} \leq \norm{f}_{\infty}$. 
Thus we get $\norm{S_{t}f}_{\infty}\leq \norm{f}_{\infty}$.
\item This is evident.
\item For the sake of simplicity, let $\mathscr{H}$ denote $L^2(X,\mu)$. 
If $f \in \iota^{-1}(\mathcal{D}(L))$, we have
\begin{align}
\mathcal{A}_{\lambda+1}(T_t[f]-[f])
&=-(T_tL[f]-L[f],T_t[f]-[f])_{\mathscr{H}}
+(\lambda+1)\norm{T_t[f]-[f]}^{2}_{\mathscr{H}}.
\end{align}
Since $\{T_{t}\}_{t \geq 0}$ is strongly continuous, we have
\begin{align}
\lim_{t \downarrow 0}\mathcal{A}_{\lambda+1}(T_t[f]-[f])=0.
\end{align}
On the other hand, by Propositions~\ref{kihon} and \ref{prp6}, 
\begin{align}
\norm{S_{t}f-f}_{\infty}
&\leq
(1-s)^{-1}
\left( \left(\mathrm{diam}_{R}X\right)^{1/2}+1 \right) 
\mathcal{A}_{\lambda+1}(T_t[f]-[f])^{1/2} 
\end{align}
holds. Therefore, the conclusion holds.

In the general case, for any $f \in \mathcal{F}$, $t \geq 0$ and 
$g \in \iota^{-1}(\mathcal{D}(L))$, 
\begin{align}
\norm{S_{t}f-f}_{\infty}
&\leq
\norm{S_{t}f-S_{t}g}_{\infty}
+
\norm{S_{t}g-g}_{\infty}
+
\norm{g-f}_{\infty} \\
&\leq
\norm{S_{t}}_{\mathrm{op}}\norm{f-g}_{\infty}
+
\norm{S_{t}g-g}_{\infty}
+
\norm{f-g}_{\infty} \\
&\leq
\norm{S_{t}g-g}_{\infty}
+2\norm{f-g}_{\infty}
\end{align}
holds. Since $\iota^{-1}(\mathcal{D}(L))$ is dense in 
$(C(X),\norm{\cdot}_{\infty})$ by Lemma~\ref{dns}, the conclusion holds.
\qedhere \end{enumerate}
\end{proof}
$S_{t}$ extends continuously to $\tilde{S}_{t} \in \mathscr{L}((C(X),\norm{\cdot}_{\infty}))$ for any $t \geq 0$ 
since $\mathcal{F}$ is dense in $(C(X),\norm{\cdot}_{\infty})$. 
Then, 
by Lemma~\ref{fsmg}, $\{ \tilde{S}_{t}\}_{t \geq 0}$ is a strongly continuous 
contraction semigroup on $(C(X),\norm{\cdot}_{\infty})$.
\begin{prp}\label{restrict}
For any $t \geq 0$ and $f \in C(X)$, $[\tilde{S}_{t}f]=T_{t}[f]$ holds.
\end{prp}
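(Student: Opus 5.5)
Fix $t \geq 0$ and $f \in C(X)$. Since $\mathcal{F}$ is dense in $(C(X),\norm{\cdot}_{\infty})$, I choose $\{f_{m}\}_{m \in \N} \subset \mathcal{F}$ with $\norm{f_{m}-f}_{\infty} \to 0$. The idea is to pass the identity $[S_{t}f_{m}]=T_{t}[f_{m}]$, which holds by the definition of $S_{t}$, to the limit $m \to \infty$ on both sides. Each side is controlled by a different norm, and the argument rests on comparing those norms.

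On the left-hand side I use the construction of $\tilde{S}_{t}$ as the continuous extension of $S_{t}$. This gives $\norm{\tilde{S}_{t}f - S_{t}f_{m}}_{\infty} = \norm{\tilde{S}_{t}(f-f_{m})}_{\infty} \leq \norm{f-f_{m}}_{\infty} \to 0$, using the contraction property from Lemma~\ref{fsmg}~(i) carried over to the extension. Because $\mu$ is a probability measure, $\norm{[g]}_{L^{2}(X,\mu)} \leq \norm{g}_{\infty}$ for every $g \in C(X)$. Hence $[S_{t}f_{m}] \to [\tilde{S}_{t}f]$ in $L^{2}(X,\mu)$.

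On the right-hand side, the same inequality gives $[f_{m}] \to [f]$ in $L^{2}(X,\mu)$. Since $T_{t}$ is a bounded operator on $L^{2}(X,\mu)$ with $\norm{T_{t}}_{\mathrm{op}} \leq e^{\lambda t}$, it follows that $T_{t}[f_{m}] \to T_{t}[f]$ in $L^{2}(X,\mu)$.

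Combining the two limits with $[S_{t}f_{m}] = T_{t}[f_{m}]$ and the uniqueness of limits in $L^{2}(X,\mu)$ yields $[\tilde{S}_{t}f]=T_{t}[f]$. No real obstacle is expected. The only point needing care is that the extension $\tilde{S}_{t}$ is well defined and independent of the approximating sequence, which is standard for bounded operators on a dense subspace. The uniform-to-$L^{2}$ comparison is immediate since $\mu(X)=1$.
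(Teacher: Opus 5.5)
Your proof is correct and is essentially the paper's argument. The paper observes that $T_{t}\circ\iota$ and $\iota\circ\tilde{S}_{t}$ are both continuous from $(C(X),\norm{\cdot}_{\infty})$ to $L^{2}(X,\mu)$ and agree on the dense subspace $\mathcal{F}$; your sequential approximation spells out exactly this, and it needs only the continuity of $\tilde{S}_{t}$, not the contraction bound.
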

\begin{proof}
Note that, since $\iota$ is a bounded operator from $(C(X),\norm{\cdot}_{\infty})$ 
to $L^{2}(X, \mu)$, $T_{t} \circ \iota$ and $\iota \circ \tilde{S}_{t}$ 
are continuous functions from $(C(X),\norm{\cdot}_{\infty})$ to $L^{2}(X, \mu)$.
By definition, $T_{t} \circ \iota = \iota \circ \tilde{S}_{t}$ on 
$\mathcal{F}$ holds, thus we have $T_{t} \circ \iota = \iota \circ \tilde{S}_{t}$ 
on $(C(X),\norm{\cdot}_{\infty})$.
\end{proof}
\begin{prp}\label{feller}
The semigroup $\{ \tilde{S}_{t}\}_{t \geq 0}$ on $(C(X), \norm{\cdot}_{\infty})$ is 
Feller and conservative.
\end{prp}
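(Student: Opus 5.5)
We need to show that $\{\tilde{S}_{t}\}_{t\ge0}$ is a Feller semigroup on $C(X)$, i.e.\ a strongly continuous, positive, contraction semigroup of operators on $(C(X),\norm{\cdot}_{\infty})$, and that it is conservative, i.e.\ $\tilde{S}_{t}1=1$. Since $X$ is compact, $C(X)=C_{0}(X)$, so no vanishing-at-infinity condition arises. By Lemma~\ref{fsmg} and the construction of $\tilde{S}_{t}$ as the continuous extension of $S_{t}$, we already know that $\{\tilde{S}_{t}\}_{t\ge0}$ is a strongly continuous contraction semigroup on $C(X)$. It therefore remains to verify positivity (sub-Markov property) and conservativeness.

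For positivity, take $f\in C(X)$ with $0\le f\le 1$. By Proposition~\ref{restrict}, $[\tilde{S}_{t}f]=T_{t}[f]$. The Markov property of $\{T_{t}\}_{t\ge0}$ (equivalent to (SD4) by the remark after the definition of closed forms, and (SD4) holds by Proposition~\ref{semidir2}) gives $0\le T_{t}[f]\le 1$ $\mu$-a.e. Hence $0\le\tilde{S}_{t}f\le1$ $\mu$-a.e. Since $\tilde{S}_{t}f$ is continuous and $\mu$ has full support, the set where $\tilde{S}_{t}f<0$ or $\tilde{S}_{t}f>1$ is open and $\mu$-null, hence empty. So the inequality holds everywhere, and by linearity $\tilde{S}_{t}$ is positive.

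For conservativeness, I will show $T_{t}[1]=[1]$. Since $1\in\mathcal{F}$, we have $\mathcal{E}(1,v)=0$ for all $v\in\mathcal{F}$. Also $\nu_{1,h_{i}}=0$, because $\nu_{1}=0$ (it has total mass $2\mathcal{E}(1)=0$) and $\abs{\nu_{1,h_i}(B)}\le\nu_{1}(B)^{1/2}\nu_{h_i}(B)^{1/2}$. Hence $\mathcal{Q}_{i}(1,v)=0$, and so $\mathcal{A}(1,v)=0$ for all $v\in\mathcal{F}$. Consequently, for $\alpha>\lambda$,
\begin{align}
\mathcal{A}_{\alpha}(\alpha^{-1}[1],v)=([1],v)_{L^{2}(X,\mu)},\quad v\in\mathcal{F},
\end{align}
and the characterization \eqref{characterresolvent} yields $G_{\alpha}[1]=\alpha^{-1}[1]$. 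Equivalently, $[1]\in\mathcal{D}(L)$ with $L[1]=0$, since $(-L[1],v)=\mathcal{A}(1,v)=0$. Therefore $T_{t}[1]=[1]$, either from $\frac{d}{dt}T_{t}[1]=T_{t}L[1]=0$ or by inverting the Laplace transform. Applying Proposition~\ref{restrict} and the continuity-plus-full-support argument again gives $\tilde{S}_{t}1=1$ everywhere.

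The main point requiring care is the passage from $\mu$-a.e.\ statements to pointwise ones; it is handled by continuity and $\mathop{\mathrm{supp}}\mu=X$. The only nontrivial ingredient is the vanishing of the drift term on constants, which follows from $\nu_{1}=0$ as shown above.
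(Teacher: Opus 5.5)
Your proof is correct and matches the paper's argument: positivity comes from the Markov property of $\{T_{t}\}_{t\ge0}$ via Proposition~\ref{restrict}, and conservativeness from $\nu_{1}=0$, which yields $G_{\alpha}[1]=\alpha^{-1}[1]$. The only differences are cosmetic. You conclude $T_{t}[1]=[1]$ from $L[1]=0$ (i.e.\ $\frac{d}{dt}T_{t}[1]=T_{t}L[1]=0$), whereas the paper uses the exponential formula; and you spell out the a.e.-to-pointwise step via continuity and $\mathop{\mathrm{supp}}\mu=X$, which the paper leaves implicit.
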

\begin{proof}
If $f \in C(X)$ satisfies $f \geq 0$, then, by using the Markov property of $\{{T}_{t}\}_{t \geq 0}$ 
and Proposition~\ref{restrict}, $\tilde{S}_{t}f \geq 0$ holds for any $t \geq 0$. 
Therefore, $\{ \tilde{S}_{t}\}_{t \geq 0}$ is Feller since we already know that $\{ \tilde{S}_{t}\}_{t \geq 0}$ is a strongly continuous 
contraction semigroup on $(C(X),\norm{\cdot}_{\infty})$.

Next, we have $\nu_{[1]}=0$ from $\norm{\nu_{[1]}}=2\mathcal{E}([1])=0$. Thus, by simple calculations, 
we have 
\begin{align}
\mathcal{A}_{\alpha}(\alpha^{-1}[1],[f])
=\mathcal{A}(\alpha^{-1}[1],[f])
+\alpha(\alpha^{-1}[1],[f])_{L^2(X,\mu)}
=([1],[f])_{L^2(X,\mu)}
\end{align}
for any $\alpha > \lambda$ and $f \in \mathcal{F}$. Thus, we obtain $G_{\alpha}[1]=\alpha^{-1}[1]$ 
for any $\alpha > \lambda$. In particular, $[1] \in G_{\alpha}(L^2(X,\mu))=\mathcal{D}(L)$ holds. 
Therefore, the following holds:
\begin{align}
   T_{t}[1]
   &=\limtoinfty{\alpha}e^{(\lambda-\alpha)t}
   \sum_{n=0}^{\infty}
   \frac{(t\alpha)^{n}}{n!}(\alpha G_{\alpha+\lambda})^{n}[1]
   =\limtoinfty{\alpha}e^{(\lambda-\alpha+\frac{\alpha^2}{\alpha+\lambda})t}
   [1]
   =[1]
\end{align}
(see \cite[Theorem 1.12]{ma2012introduction}). Therefore, we have $\tilde{S}_{t}1=1$.
\end{proof}
Let $\{\tilde{G}_{\alpha} \}_{\alpha >0}$ be the strongly continuous resolvent 
on $(C(X),\norm{\cdot}_{\infty})$
associated with the semigroup $\{ \tilde{S}_{t}\}_{t \geq 0}$.
\begin{prp}\label{resolvdouitu}
For any $\alpha > \lambda$ and $f \in C(X)$, $[\tilde{G}_{\alpha}f]=G_{\alpha}[f]$ holds. 
\end{prp}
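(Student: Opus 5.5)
We will compare the two Laplace-transform representations of the resolvents and pass the equivalence class inside the Bochner integral. Since $\{\tilde{S}_{t}\}_{t\ge0}$ is a strongly continuous contraction semigroup on $(C(X),\norm{\cdot}_{\infty})$, its resolvent is given, for $\alpha>0$, by the $C(X)$-valued Bochner (or Riemann) integral
\begin{equation}
\tilde{G}_{\alpha}f=\int_{0}^{\infty}e^{-\alpha t}\tilde{S}_{t}f\,dt,
\end{equation}
which converges absolutely because $\norm{\tilde{S}_{t}f}_{\infty}\le\norm{f}_{\infty}$. On the other hand, for $\alpha>\lambda$, $G_{\alpha}[f]=\int_{0}^{\infty}e^{-\alpha t}T_{t}[f]\,dt$ as an $L^{2}(X,\mu)$-valued integral, by the definition of the resolvent of the closed form.

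The key step is to apply the map $\iota\colon C(X)\to L^{2}(X,\mu)$. It is linear and bounded, since $\mu$ is a probability measure and so $\norm{[g]}_{L^{2}(X,\mu)}\le\norm{g}_{\infty}$. Bounded linear operators commute with Bochner integrals, so
\begin{equation}
[\tilde{G}_{\alpha}f]=\iota\Bigl(\int_{0}^{\infty}e^{-\alpha t}\tilde{S}_{t}f\,dt\Bigr)=\int_{0}^{\infty}e^{-\alpha t}\iota(\tilde{S}_{t}f)\,dt.
\end{equation}
Proposition~\ref{restrict} gives $\iota(\tilde{S}_{t}f)=T_{t}[f]$ for every $t\ge0$. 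The right-hand side is therefore exactly $\int_{0}^{\infty}e^{-\alpha t}T_{t}[f]\,dt=G_{\alpha}[f]$, which is the claim.

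The only point needing care is justifying the interchange on the infinite interval. I would handle it by truncating to $[0,M]$. There $t\mapsto\tilde{S}_{t}f$ is continuous in $C(X)$, so the integral is a norm limit of Riemann sums, and $\iota$ maps Riemann sums to Riemann sums. Then let $M\to\infty$, using $\norm{\tilde{S}_{t}f}_{\infty}\le\norm{f}_{\infty}$ on the $C(X)$ side and $\norm{T_{t}}_{\mathrm{op}}\le e^{\lambda t}$ with $\alpha>\lambda$ on the $L^{2}$ side, so both tails vanish. The restriction $\alpha>\lambda$ is needed only so that $G_{\alpha}$ is defined by this integral.

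As an alternative check, one could verify $\mathcal{A}_{\alpha}([\tilde{G}_{\alpha}f],[g])=([f],[g])$ directly via \eqref{characterresolvent}. That route would require first knowing $[\tilde{G}_{\alpha}f]\in\mathcal{F}$, so the integral argument above is simpler and is the one I would use.
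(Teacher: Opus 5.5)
Your proposal is correct and follows the paper's proof essentially verbatim. You apply the bounded map $\iota$ to the $C(X)$-valued Laplace integral, pass it inside (the paper cites Hille's theorem for Bochner integrals), and use Proposition~\ref{restrict} to identify $[\tilde{S}_{t}f]$ with $T_{t}[f]$. Your truncation and Riemann-sum argument is simply an elementary justification of that same interchange step.
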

\begin{proof}
Since $\iota$ is a bounded operator from $(C(X),\norm{\cdot}_{\infty})$ 
to $L^{2}(X, \mu)$, by using Hille's theorem of the Bochner integral and Proposition~\ref{restrict}, we have
\begin{align}
[\tilde{G}_{\alpha}f]
&=\iota
\left(
\int_{0}^{\infty}e^{-\alpha t}\tilde{S}_{t}f\,dt
\right) \\
&=
\int_{0}^{\infty}
\iota\left(e^{-\alpha t}\tilde{S}_{t}f\right)\,dt \\
&=\int_{0}^{\infty}
e^{-\alpha t}[\tilde{S}_{t}f]\,dt \\
&=\int_{0}^{\infty}
e^{-\alpha t}T_{t}[f]\,dt \\
&=G_{\alpha}[f].
\qedhere \end{align}
\end{proof}
Next, for each $n \in \N$, we can define a strongly continuous 
contraction semigroup $\{\tilde{S}^{n}_{t}\}_{t \geq0}$ on $(l(V_{n}),\norm{\cdot}_{\infty})$ 
by $[\tilde{S}^{n}_{t}f]={T}^{n}_{t}[f]$ for $f \in l(V_{n})$. Let 
$\{\tilde{G}^{n}_{\alpha} \}_{\alpha >0}$ denote its resolvent on 
$(l(V_{n}),\norm{\cdot}_{\infty})$. The propositions stated above also hold in this case.
\begin{lem}\label{syuteiri}
For any $\alpha > \lambda$ and $f \in C(X)$, we have the following: 
\begin{equation}
\limtoinfty{n}\norm{
\tilde{G}^{n}_{\alpha}f\lvert_{V_{n}}
-(\tilde{G}_{\alpha}f)\lvert_{V_{n}}
}_{\infty}=0.
\end{equation}
\begin{proof}
For the sake of simplicity, let $\mathscr{H}_{n}$ (resp.\ $\mathscr{H}$) denote $L^2(V_{n},\mu_{n})$ (resp.\ $L^2(X,\mu)$). 
For any $n \in \N$, by Proposition~\ref{prp5}, we have 
\begin{align}
\mathcal{E}^{n}_{1}({G}^{n}_{\alpha}[f\lvert_{V_{n}}])
\leq
\mathcal{E}^{n}_{\lambda+1}({G}^{n}_{\alpha}[f\lvert_{V_{n}}])
\leq
(1-s)^{-1}\mathcal{A}^{n}_{\lambda+1}({G}^{n}_{\alpha}[f\lvert_{V_{n}}]).
\end{align}
Thus we obtain $\supover{n}\mathcal{E}^{n}_{1}({G}^{n}_{\alpha}[f\lvert_{V_{n}}]) < \infty$
since the following holds:
\begin{align}
\mathcal{A}^{n}_{\lambda+1}({G}^{n}_{\alpha}[f\lvert_{V_{n}}])
&=([f\lvert_{V_{n}}],{G}^{n}_{\alpha}[f\lvert_{V_{n}}])_{\mathscr{H}_{n}}
+(\lambda+1-\alpha)\norm{{G}^{n}_{\alpha}[f\lvert_{V_{n}}]}^{2}_{\mathscr{H}_{n}} \\
&\leq
\norm{{G}^{n}_{\alpha}}_{\mathrm{op}}
\norm{[f\lvert_{V_{n}}]}^{2}_{\mathscr{H}_{n}}
+\abs{\lambda+1-\alpha}
\norm{{G}^{n}_{\alpha}}^{2}_{\mathrm{op}}
\norm{[f\lvert_{V_{n}}]}^{2}_{\mathscr{H}_{n}} \\
&\leq
\left\{(\alpha-\lambda)^{-1}+\abs{\lambda+1-\alpha}(\alpha-\lambda)^{-2}\right\}
\norm{f}^{2}_{\infty} < \infty.
\end{align}
On the other hand, since $[f\lvert_{V_{n}}] \in \mathscr{H}_{n}$ strongly K-S converges 
to $[f] \in \mathscr{H}$ and ${G}^{n}_{\alpha}$ strongly K-S converges to ${G}_{\alpha}$ 
by Proposition~\ref{main1}, ${G}^{n}_{\alpha}[f\lvert_{V_{n}}] \in \mathscr{H}_{n}$ 
strongly K-S converges to ${G}_{\alpha}[f] \in \mathscr{H}$. Therefore, we obtain the conclusion 
from Corollary~\ref{cor2} and Proposition~\ref{cptop}, noticing that 
$[\tilde{G}_{\alpha}f]=G_{\alpha}[f]$ 
(resp.\ $[\tilde{G}^{n}_{\alpha}f\lvert_{V_{n}}]=G_{\alpha}^{n}[f\lvert_{V_{n}}]$)
holds by Proposition~\ref{resolvdouitu}.
\end{proof}
\end{lem}
\begin{prp}\label{main2}
For any $t \geq 0$ and $f \in C(X)$, we have
\begin{equation}
\limtoinfty{n}\norm{
\tilde{S}^{n}_{t}f\lvert_{V_{n}}
-(\tilde{S}_{t}f)\lvert_{V_{n}}
}_{\infty}=0.
\end{equation}
\end{prp}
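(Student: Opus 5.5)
The plan is to apply Proposition~\ref{daizi} (Tölle's Trotter–Kato type result) in a Kuwae--Shioya framework built on the Banach spaces $\mathscr{X}_{n}\eqdot(l(V_{n}),\norm{\cdot}_{\infty})$ and $\mathscr{X}_{\infty}\eqdot(C(X),\norm{\cdot}_{\infty})$, rather than the $L^{2}$ framework of Proposition~\ref{KS1}. We take $\mathscr{C}_{\infty}=\mathscr{X}_{\infty}=C(X)$ and $\Phi_{n}f\eqdot f\lvert_{V_{n}}$.

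We first check that this is a K-S convergence of Banach spaces in the sense of Definition~\ref{ksspaceconve}. The maps are linear with $\norm{\Phi_{n}}_{\mathrm{op}}\le 1$. We also need $\norm{f\lvert_{V_{n}}}_{\infty}\to\norm{f}_{\infty}$. This holds because $V_{n}$ is non-decreasing, $V_{*}$ is dense in $(X,R)$, and $f$ is continuous: the sequence $\sup_{V_{n}}\abs{f}$ is non-decreasing and its supremum is $\sup_{V_{*}}\abs{f}=\norm{f}_{\infty}$. Since $\mathscr{C}_{\infty}=\mathscr{X}_{\infty}$, Corollary~\ref{kyocor} identifies strong K-S convergence in this framework: $u_{n}\in l(V_{n})$ strongly converges to $u\in C(X)$ if and only if $\norm{u\lvert_{V_{n}}-u_{n}}_{\infty}\to 0$. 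Consequently the desired statement is exactly that $\tilde{S}^{n}_{t}\Phi_{n}f$ strongly converges to $\tilde{S}_{t}f$ for every $f$. That in turn follows once $\{\tilde{S}^{n}_{t}\}_{n}$ strongly converges to $\tilde{S}_{t}$ as operators.

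The next step is to verify the remaining hypotheses of Proposition~\ref{daizi}. By Lemma~\ref{fsmg} and the subsequent extension, $\{\tilde{S}_{t}\}$ is a strongly continuous contraction semigroup on $C(X)$. The discrete analogue for $\{\tilde{S}^{n}_{t}\}$ on $l(V_{n})$ was stated before Lemma~\ref{syuteiri}, at least for large $n$ (we discard finitely many $n$). It then suffices to show that $\{\tilde{G}^{n}_{\alpha}\}_{n}$ strongly converges to $\tilde{G}_{\alpha}$ for some fixed $\alpha>\lambda$. For this we use Lemma~\ref{tech3} with $\mathscr{C}'_{\infty}=C(X)$. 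The uniform operator bound holds because, for the sup-norm resolvent of a contraction semigroup, $\norm{\tilde{G}^{n}_{\alpha}}_{\mathrm{op}}\le\alpha^{-1}$. The pointwise requirement, that $\tilde{G}^{n}_{\alpha}\Phi_{n}\psi$ strongly converges to $\tilde{G}_{\alpha}\psi$, means by Corollary~\ref{kyocor} that $\norm{\tilde{G}^{n}_{\alpha}\psi\lvert_{V_{n}}-(\tilde{G}_{\alpha}\psi)\lvert_{V_{n}}}_{\infty}\to0$, which is precisely Lemma~\ref{syuteiri}.

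Combining these, Proposition~\ref{daizi} gives strong convergence of $\tilde{S}^{n}_{t}$ to $\tilde{S}_{t}$ for all $t\ge0$. Applying this to $u_{n}=f\lvert_{V_{n}}$, which strongly converges to $f$ by Remark~\ref{rem3}, and rereading the conclusion through Corollary~\ref{kyocor}, yields the claim. The analytic content is already contained in Lemma~\ref{syuteiri}, so the main obstacle is bookkeeping. One point is making sure that Proposition~\ref{daizi} and Lemma~\ref{tech3}, stated for general Banach spaces, apply to this new sup-norm framework. The other is the convergence of the norms $\norm{f\lvert_{V_{n}}}_{\infty}$, which relies on density of $V_{*}$ and monotonicity of $V_{n}$. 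Both are straightforward.
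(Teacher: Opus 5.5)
Your proposal is correct and follows the paper's proof essentially step by step. You use the sup-norm Kuwae--Shioya framework with $\Phi_{n}f=f\lvert_{V_{n}}$, then Lemma~\ref{syuteiri} together with Corollary~\ref{kyocor} and Lemma~\ref{tech3} (with the bound $\norm{\tilde{G}^{n}_{\alpha}}_{\mathrm{op}}\le\alpha^{-1}$) for resolvent convergence, and finally Proposition~\ref{daizi} and Corollary~\ref{kyocor} again; your monotonicity argument for $\norm{f\lvert_{V_{n}}}_{\infty}\to\norm{f}_{\infty}$ just spells out what the paper compresses into ``because $V_{*}$ is dense in $X$''.
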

\begin{proof}
In this proof, we consider a different Kuwae--Shioya's framework from the above. 
Let $\Phi_{n} \colon C(X) \rightarrow l(V_{n})$ be defined as $\Phi_{n}(f)=f\lvert_{V_{n}}$. 
Then, the sequence of the Banach spaces 
$\{(l(V_{n}),\norm{\cdot}_{\infty})\}_{n \in \N}$ K-S converges to 
the Banach space $(C(X),\norm{\cdot}_{\infty})$ because $V_{*}$ is dense in $X$. Also, note 
that $\norm{\Phi_{n}}_{\mathrm{op}} \leq 1$ holds. 

We work in this framework. We take an arbitrary $\alpha > \lambda$. Then, for any 
$f \in C(X)$, by Lemma~\ref{syuteiri}, we have 
\begin{equation}
\limtoinfty{n}\norm{
\tilde{G}^{n}_{\alpha}f\lvert_{V_{n}}
-(\tilde{G}_{\alpha}f)\lvert_{V_{n}}
}_{\infty}=0.
\end{equation}
Thus, $\tilde{G}^{n}_{\alpha}f\lvert_{V_{n}} \in (l(V_{n}),\norm{\cdot}_{\infty})$ 
strongly K-S converges to $\tilde{G}_{\alpha}f \in (C(X),\norm{\cdot}_{\infty})$
by Corollary~\ref{kyocor}. Moreover, since 
$\supover{n}\norm{\tilde{G}^{n}_{\alpha}}_{\mathrm{op}} \leq \alpha^{-1} < \infty$, 
$\{ \tilde{G}^{n}_{\alpha} \}_{n \in \N}$ strongly K-S converges to
$\tilde{G}_{\alpha}$ by Lemma~\ref{tech3}. Thus, by the fact that
$\supover{n}\norm{\Phi_{n}}_{\mathrm{op}} \leq 1 < \infty$ holds, 
$\{ \tilde{S}^{n}_{t} \}_{n \in \N}$ strongly K-S converges to
$\tilde{S}_{t}$ from Proposition~\ref{daizi}. Thus, 
$\{\tilde{S}^{n}_{t}f\lvert_{V_{n}}\}_{n \in \N}$ 
strongly K-S converges to $\tilde{S}_{t}f$. From Corollary~\ref{kyocor}, 
we get the conclusion.
\end{proof}
\begin{proof}[Proof of Theorem~\ref{mainfirst}]
From Propositions~\ref{feller} and \ref{main2}, $\{ \tilde{S}_{t} \}_{t \geq 0 }$ is a conservative Feller semigroup 
on $(C(X),\norm{\cdot}_{\infty})$ and, 
for any $t \geq 0$ and $f \in C(X)$,
\begin{equation}
\limtoinfty{n}
\norm{\tilde{S}^{n}_{t}f \lvert_{V_{n}}-(\tilde{S}_{t}f)\lvert_{V_{n}}}_{\infty}
=0.
\end{equation}
Since the convergence of the initial distributions is assumed, the assertion follows from \cite[Section 4, Theorem 2.11]{ethier1986markov}.
\end{proof}
\section{Examples}
In this section, we review p.c.f.\ self-similar fractals as typical examples to which we 
can apply Theorem~\ref{mainfirst}.
\begin{dfn}[{\cite[Definition 1.3.1]{kigami2001analysis}}]
Let $S=\{ 1,2, \ldots,M \} \,(M \in \N, M \geq2)$ 
with discrete topology. 
We define $W \coloneqq S^{\N}$ with the product topology, 
$W_{n} \coloneqq S^{n}$ for $n \in \N$, 
$W_{*} \coloneqq \bigcup_{n=1}^{\infty}W_{n}$, 
$\sigma_{i} \colon W \ni w_{1}w_{2} \cdots \mapsto i w_{1}w_{2} \cdots \in W$ 
for $i \in S$, 
and $\sigma \colon W \ni w_{1}w_{2}w_{3} \cdots \mapsto w_{2}w_{3} \cdots \in W$.
\end{dfn}
\begin{dfn}[{\cite[Definitions 1.3.1, 1.3.4, 1.3.13]{kigami2001analysis}}]
Let $X$ be a compact metrizable connected topological space and 
$F_{i} \colon X \rightarrow X$ an injective continuous map for $i \in S$. Then, 
$(X, \{ F_{i} \}_{i \in S} )$ is called a self-similar structure if 
there exists a $($unique$)$ surjective continuous map $\pi \colon W \rightarrow X$ 
such that $F_{i} \circ \pi = \pi \circ \sigma_{i}$ holds for $i \in S$.
\end{dfn}
For a self-similar structure $(X, \{ F_{i} \}_{i \in S} )$, we define 
\begin{align}
        &C \coloneqq \pi^{-1} 
        \left(\bigcup_{\substack{i,j \in S \\i \neq j}}
        \left(F_{i}(X) \cap F_{j}(X) \right) \right)
        \subset W, \\
    &P \coloneqq 
    \bigcup_{n \geq 1} \sigma^{n}(C) \subset W.
    \end{align}
Then, $(X, \{ F_{i} \}_{i \in S} )$ is called post-critically finite
$($p.c.f.\ for short$)$ if $P$ is a finite set. 
For a p.c.f.\ self-similar structure $(X, \{ F_{i} \}_{i \in S} )$, 
we define $F_{w} \coloneqq F_{w_{1}} \circ F_{w_{2}} \circ \cdots \circ F_{w_{n}}$ 
for $w=w_{1}w_{2} \cdots w_{n} \in W_{n},\, n \in \N$ and 
\begin{align}
    &V_{0} \coloneqq \pi(P) \subset X ,\\
    &V_{n} \coloneqq 
    \bigcup_{w \in W_{n}}
    F_{w}(V_{0})
    \subset X
    \,(n \in \N).
\end{align}
Then, $\{V_{n} \}_{n \geq 0}$ is a non-decreasing sequence of nonempty finite 
subsets of $X$ and $V_{*} \coloneqq \bigcup_{n=0}^{\infty}V_{n}$ is dense in $X$.

Now, let $(X, \{ F_{i} \}_{i \in S} )$ be a p.c.f.\ self-similar structure and 
$\{r_{i} \}_{i \in S} \subset (0,1)$. We define 
$r_{w} \coloneqq r_{w_{1}} r_{w_{2}} \cdots r_{w_{n}}$ for 
$w=w_{1}w_{2} \cdots w_{n} \in W_{n}, n \in \N$. Let 
$(\mathcal{E}^{0},l(V_{0}))$ be a resistance form on $V_{0}$ represented as
\begin{equation}
\mathcal{E}^{0}(f_{0},g_{0})= 
\frac{1}{2}\sum_{\substack{x ,y \in V_{0} \\x \neq y}}
c_{0,x,y}(f_{0}(x)-f_{0}(y))(g_{0}(x)-g_{0}(y)),
\quad
f_{0},g_{0} \in l(V_{0}).
\end{equation}
For $n \in \N$, we define a resistance form $(\mathcal{E}^{n},l(V_{n}))$ 
on $V_{n}$ by 
\begin{align}
    \mathcal{E}^{n}(f_{n},g_{n}) \coloneqq
    \sum_{w \in W_{n}}
    r_{w}^{-1}
    \mathcal{E}^{0}(f_{n} \circ F_{w},g_{n}\circ F_{w}), 
    \quad f_{n},g_{n} \in l(V_{n}).
\end{align}
Note that $(\mathcal{E}^{n},l(V_{n}))$ is also represented as 
\begin{equation}
\mathcal{E}^{n}(f_{n},g_{n})= 
\frac{1}{2}\sum_{\substack{x ,y \in V_{n} \\x \neq y}}
c_{n,x,y}(f_{n}(x)-f_{n}(y))(g_{n}(x)-g_{n}(y)),
\quad
f_{n},g_{n} \in l(V_{n}),
\end{equation}
where, for $x,y \in V_{n} \,(x \neq y)$,  
\begin{align}
    c_{n,x,y}=\sum_{w \in W_{n}}1_{F_{w}(V_{0})}(x)1_{F_{w}(V_{0})}(y)
    r_{w}^{-1}c_{0,F_{w}^{-1}(x), F_{w}^{-1}(y)}.
\end{align}
Here, we assume $\mathrm{Tr}(\mathcal{E}^{1}\lvert V_{0})=\mathcal{E}^{0}$ holds. 
Then, $\{ \mathcal{E}^{n}(f \lvert_{V_{n}}) \}_{n \geq 0}$ is non-decreasing 
for any $f \in l(V_{*})$, thus we define 
\begin{align}
    &\mathcal{F} \coloneqq
    \{ f \in C(X) \mid
    \supover{n} \mathcal{E}^{n}(f \lvert_{V_{n}}) < \infty
    \}
\end{align}
and 
\begin{align}
    \mathcal{E}(f,g) \coloneqq
    \limtoinfty{n}
    \mathcal{E}^{n}(f \lvert_{V_{n}},g \lvert_{V_{n}}),
    \quad f,g \in \mathcal{F}.
\end{align}
Then, $(\mathcal{E},\mathcal{F})$ is a resistance form on $X$ and, 
for $n \in \N$, 
$\mathop{\mathrm{Tr}}(\mathcal{E} \lvert V_{n})=\mathcal{E}^{n}$ holds. 
Moreover, the resistance metric $R$ associated with $(\mathcal{E},\mathcal{F})$ 
defines the same topology as the original topology of $X$, and especially, 
$(X,R)$ is compact (see \cite[Theorem~7.14 and Proposition~7.18]{barlow2006diffusions}). 

Next, let $\{\theta_{i} \}_{i \in S} \subset (0,1)$ and assume 
$\sum_{i \in S}\theta_{i}=1$. As above, we define 
$\theta_{w} \coloneqq \theta_{w_{1}} \theta_{w_{2}} \cdots \theta_{w_{n}}$. 
Let $\mu$ be the unique probability measure on $X$ such that 
$\mu(F_{w}(X))=\theta_{w}$ for any $w \in W_{n}, n \in \N$. Then, 
$\mathop{\mathrm{supp}}\mu =X$ holds and $(\mathcal{E},\mathcal{F})$ is 
a strongly local Dirichlet form on $L^2(X,\mu)$ (see \cite[Theorem~7.14]{barlow2006diffusions}). Also, for any $n \in \N$, 
we define a probability measure $\mu_{n}$ on $V_{n}$ by
\begin{align}
\mu_{n}(\{ x\}) \coloneqq
(\#V_{0})^{-1}
\sum_{w \in W_{n}}
\theta_{w}
1_{F_{w}(V_{0})}(x)
 > 0,\, x \in V_{n}.
\end{align}
Then, $\{\mu_{n} \}_{n \in \N}$ weakly converges to $\mu$ (see \cite[Lemma~5.29]{barlow2006diffusions}).

Furthermore, Condition \ref{assmp3} holds in this case 
(see \cite{hino2014geodesic}). Therefore, 
we can apply Theorem~\ref{mainfirst}
when we choose $\{h_{i} \}_{i=1}^{N} \subset \mathcal{F}$ and choose sufficiently small $\{b_{i} \}_{i=1}^{N} \subset C(X)$ which satisfy Condition \ref{assmp1}. 
(Of course, we can also choose them for Condition \ref{assmp2}). In other words, we have verified that Theorem~\ref{mainfirst} can be applied to the standard approximations of p.c.f.\ self-similar fractals and the approximating processes are finite-state continuous-time Markov chains on the vertex sets $V_{n}$.

Finally, in the case of the Sierpi\'nski gasket SG, we compare our result with previous studies. 
Let $p_1=(\frac{1}{2},\frac{\sqrt{3}}{2}), p_2=(0,0), p_3=(1,0)$ and 
$F_{i}(x) =\frac{x+p_{i}}{2}$ for $i=1, 2, 3$. Then, $(\mathrm{SG}, \{F_{i}\}_{i=1}^{3})$ is a p.c.f.\ self-similar structure and $V_{0}=\{p_{1}, p_{2}, p_{3}\}$. Let $r_1=r_2=r_3=\frac{3}{5}$ and $c_{0,x,y}=1$ for distinct
$x,y \in V_{0}$. Then, it is well-known that $\mathrm{Tr}(\mathcal{E}^{1}\lvert V_{0})=\mathcal{E}^{0}$ holds. Also, for any $n \in \N$ and $x,y \in V_{n} \,(x \neq y)$, we have
\begin{equation}
c_{n,x,y}=
\begin{cases}
(\frac{5}{3})^{n} & \text{if $\{x, y\} \in E_{n}$,}\\
0 &  \text{if $\{x, y\} \notin E_{n}$,}
\end{cases}
\end{equation}
where 
\begin{align}
E_{n}=\{\{x, y\} \subset V_{n} \mid x \neq y \text{ and there exists } w \in W_{n} 
\text{ such that } x,y \in  F_{w}(V_{0})\}.
\end{align}
Also, let $x \overset{n}{\sim} y$ denote the relation defined by $\{x, y\} \in E_{n}$.
Let $\theta_1=\theta_2=\theta_3=\frac{1}{3}$. Then, we have 
\begin{equation}
\mu_{n}(\{ x\})=
\begin{cases}
(\frac{1}{3})^{n+1} & \text{if $x \in  V_{0}$,}\\
2(\frac{1}{3})^{n+1} &  \text{if $x \in V_{n} \setminus V_{0}$.}
\end{cases}
\end{equation}

We first consider the case of no non-symmetric terms, which is well-known by previous research. Let $\tilde{Y}_{n}$ be the Markov process on $V_{n}$ associated with $(\mathcal{E}^{n},l(V_{n}))$. Then, by Remark~\ref{markovchainrepresentation}, the holding time of $\tilde{Y}_{n}$ at $x \in V_{n}$ is exponentially distributed with parameter $\tilde{q}_{n}(x)$ and $\tilde{Y}_{n}$ jumps from $x$ to $y$ with probability $\tilde{\pi}_{n}(x,y)$, where
\begin{align}
\tilde{q}_{n}(x)
&=\mu_{n}(x)^{-1}
\sum_{y \in V_{n}\setminus \{x\}}c_{n,x,y} \\
&=6\cdot5^{n},
\end{align}
and 
\begin{align}
\tilde{\pi}_{n}(x,y)
&=\frac{c_{n,x,y}}{\sum_{z \in V_{n}\setminus \{x\}} 
c_{n,x,z}} \\
&=
\begin{cases}
\frac{1}{2} & \text{if $\{x, y\} \in E_{n}$ and $x \in V_{0}$,}\\
\frac{1}{4} & \text{if $\{x, y\} \in E_{n}$ and $x \in V_{n} \setminus V_{0}$,} \\
0& \text{if $\{x, y\} \notin E_{n}$.}
\end{cases}
\end{align}
We obtain the Brownian motion on SG as the limit of $\{\tilde{Y}_{n}\}_{n \in \N}$.

Next, we observe the case with non-symmetric terms. Let $Y_{n}$ be the Markov process on $V_{n}$ associated with $(\mathcal{A}^{n},l(V_{n}))$. Then, by Remark~\ref{markovchainrepresentation}, the holding time of $Y_{n}$ at $x \in V_{n}$ is exponentially distributed with parameter $q_{n}(x)$ and $Y_{n}$ jumps from $x$ to $y$ with probability $\pi_{n}(x,y)$, where 
\begin{align}
q_{n}(x)
&=\mu_{n}(x)^{-1}
\sum_{y \in V_{n}\setminus \{x\}}
c_{n,x,y}(1+\eta(x,y)) \\
&=
\begin{cases}
3 \cdot 5^{n}\sum_{y \overset{n}{\sim} x}(1+\eta(x,y)) & \text{if $x \in V_{0}$,}\\
\frac{3}{2} \cdot 5^{n}\sum_{y \overset{n}{\sim} x}(1+\eta(x,y)) & \text{if $x \in V_{n} \setminus V_{0}$,} 
\end{cases}
\end{align}
\begin{align}
\pi_{n}(x,y)
&=\frac{c_{n,x,y}(1+\eta(x,y))}{\sum_{z \in V_{n}\setminus \{x\}}
c_{n,x,z}(1+\eta(x,z))} \\
&=
\begin{cases}
\frac{1+\eta(x,y)}{\sum_{z \overset{n}{\sim} x}
(1+\eta(x,z))} & \text{if $\{x, y\} \in E_{n}$,}\\
0& \text{if $\{x, y\} \notin E_{n}$,}
\end{cases}
\end{align}
and
\begin{align}
    \eta(x,y) = \frac{1}{2}\sum_{i=1}^{N}b_{i}(x)(h_{i}(x)-h_{i}(y)).
\end{align}
Thus, $q_{n}(x)$ and $\pi_{n}(x,y)$ are calculated by perturbing $\tilde{q}_{n}(x)$ and $\tilde{\pi}_{n}(x,y)$ with $\eta$, which might be thought of as the sum of gradients along $h_{i}$. Thus, non-symmetric perturbation appears as a directional modification of their jump rates and holding times. Although such perturbations become arbitrarily small as $n$ tends to $\infty$ due to the uniform continuity of $h_{i}$ since $\eta(x,y)$ affects these only when $x \overset{n}{\sim} y$ holds, their cumulative contribution survives in the limiting semi-Dirichlet forms through the drift terms and we can still obtain a Markov process as the limit of $\{{Y}_{n}\}_{n \in \N}$ with such perturbations. This gives a concrete interpretation 
of the semi-Dirichlet forms constructed in this paper. Thus, $\eta$ plays the role of a drift term and the resulting limit process becomes a diffusion process.
\section*{Acknowledgement}
The author is grateful to Professor Masanori Hino for his invaluable guidance and support.

\end{document}